\newcommand\Th{\mathsf{T}}
\newcommand\V{\mathsf{V}}
\renewcommand{\L}{\mathcal{L}}
\newcommand{\C}{\mathcal{C}}
\newcommand{\Ss}{\mathbb{S}}
\newcommand{\PP}{\mathbb{P}}
\newcommand{\QQ}{\mathbb{Q}}
\newcommand{\BB}{\mathbb{B}}
\newcommand{\MM}{\mathbb{M}}
\newcommand{\FF}{\mathbb{F}}
\newcommand{\anf}[1]{{\text{``}\hspace{0.3ex}{#1}\hspace{0.01ex}\text{''}}}
\newcommand{\range}{\operatorname{range}}
\newcommand{\rank}{\operatorname{rank}}
\newcommand{\Col}{\operatorname{Col}}
\newcommand{\ra}{\rightarrow}
\newcommand{\lr}{\leftrightarrow}
\newcommand{\Llr}{\Longleftrightarrow}
\newcommand{\tc}{\operatorname{tc}}
\newcommand{\vn}{\vec{n}}
\newcommand{\vm}{\vec{m}}
\newcommand{\one}{\mathbbm{1}}
\newcommand{\op}{\mathsf{op}}
\newcommand{\rnk}{\operatorname{rnk}}
\newcommand{\On}{{\mathrm{Ord}}}
\newcommand{\Fm}{\mathrm{Fml}}
\newcommand{\ZFC}{{\sf ZFC}}
\newcommand{\ZF}{{\sf ZF}}
\newcommand{\KM}{{\sf KM}}
\newcommand{\GCH}{{\sf GCH}}
\newcommand{\GB}{{\sf GB}}
\newcommand{\GBC}{{\sf GBC}}
\newcommand{\pmap}[4]{{#1}:{#2}\xrightarrow{#4}{#3}}
\newcommand{\Set}[2]{\{{#1}~\vert~{#2}\}}
\newcommand{\ran}[1]{{{\rm{ran}}(#1)}}
\newcommand{\dom}[1]{{{\rm{dom}}(#1)}}
\newcommand{\gbr}[1]{{\ulcorner{#1}\urcorner}}
\newtheorem{theorem}{Theorem}[section]
\newtheorem{lemma}[theorem]{Lemma}
\newtheorem{corollary}[theorem]{Corollary}
\newtheorem{question}[theorem]{Question}
\newtheorem*{claim*}{Claim}
\newtheorem*{subclaim*}{Subclaim}
\theoremstyle{definition}
\newtheorem{definition}[theorem]{Definition}
\newtheorem{example}[theorem]{Example}
\theoremstyle{remark}
\newtheorem{remark}[theorem]{Remark}
\newtheorem*{notation}{Notation}
\newenvironment{enumerate-(a)}{\begin{enumerate}[label={\upshape (\alph*)}, leftmargin=2pc]}{\end{enumerate}}
\newenvironment{enumerate-(a)-r}{\begin{enumerate}[label={\upshape (\alph*)}, leftmargin=2pc,resume]}{\end{enumerate}}
\newenvironment{enumerate-(A)}{\begin{enumerate}[label={\upshape (\Alph*)}, leftmargin=2pc]}{\end{enumerate}}
\newenvironment{enumerate-(A)-r}{\begin{enumerate}[label={\upshape (\Alph*)}, leftmargin=2pc,resume]}{\end{enumerate}}
\newenvironment{enumerate-(i)}{\begin{enumerate}[label={\upshape (\roman*)}, leftmargin=2pc]}{\end{enumerate}}
\newenvironment{enumerate-(i)-r}{\begin{enumerate}[label={\upshape (\roman*)}, leftmargin=2pc,resume]}{\end{enumerate}}
\newenvironment{enumerate-(I)}{\begin{enumerate}[label={\upshape (\Roman*)}, leftmargin=2pc]}{\end{enumerate}}
\newenvironment{enumerate-(I)-r}{\begin{enumerate}[label={\upshape (\Roman*)}, leftmargin=2pc,resume]}{\end{enumerate}}
\newenvironment{enumerate-(1)}{\begin{enumerate}[label={\upshape (\arabic*)}, leftmargin=2pc]}{\end{enumerate}}
\newenvironment{enumerate-(1)-r}{\begin{enumerate}[label={\upshape (\arabic*)}, leftmargin=2pc,resume]}{\end{enumerate}}
\begin{document}

\thanks{The first, third and fifth author were partially supported by DFG-grant LU2020/1-1. The authors would like to thank Joel David Hamkins for the careful reading of the paper and numerous useful comments.}

\subjclass[2010]{03E40, 03E70, 03E99} 

\keywords{Class forcing, Forcing theorem, Boolean completions}

\author{Peter Holy}
\address{Peter Holy, Math. Institut, Universit\"at Bonn,
Endenicher Allee 60, 53115 Bonn, Germany}
\email{pholy@math.uni-bonn.de}
\urladdr{}

\author{Regula Krapf}
\address{Regula Krapf, Math. Institut, Universit\"at Bonn,
Endenicher Allee 60, 53115 Bonn, Germany}
\email{krapf@math.uni-bonn.de}
\urladdr{}

\author{Philipp L\"ucke}
\address{Philipp L\"ucke, Math. Institut, Universit\"at Bonn,
Endenicher Allee 60, 53115 Bonn, Germany}
\email{pluecke@math.uni-bonn.de}
\urladdr{}

\author{Ana Njegomir}
\address{Ana Njegomir, Math. Institut, Universit\"at Bonn,
Endenicher Allee 60, 53115 Bonn, Germany}
\email{njegomir@math.uni-bonn.de}
\urladdr{}

\author{Philipp Schlicht}
\address{Philipp Schlicht, Math. Institut, Universit\"at Bonn, 
Endenicher Allee 60, 53115 Bonn, Germany and 
Institut f\"ur Mathematische Logik und Grundlagenforschung, Universit\"at M\"unster, 
Einsteinstr. 62, 48149 M\"unster, Germany}
\email{schlicht@math.uni-bonn.de}
\urladdr{}

\title{Class forcing, the forcing theorem and Boolean completions}

\begin{abstract} 
The forcing theorem is the most fundamental result about set forcing, stating that the forcing relation for any set forcing is definable and that the truth lemma holds, that is everything that holds in a generic extension is forced by a condition in the relevant generic filter. We show that both the definability (and, in fact, even the amenability) of the forcing relation and the truth lemma can fail for class forcing.

In addition to these negative results, we show that the forcing theorem is equivalent to the existence of a (certain kind of) Boolean completion, and we introduce a weak combinatorial property (\emph{approachability by projections}) that implies the forcing theorem to hold. Finally, we show that unlike for set forcing, Boolean completions need not be unique for class forcing. 
\end{abstract}

\maketitle 

\setcounter{tocdepth}{2}

\section{Introduction}\label{section:Intro}

The classical approach to generalize the technique of forcing with set-sized partial orders to forcing with class partial orders is to work with countable 
transitive models $M$ of some  theory extending $\ZF^-$ and 
partial orders $\PP$ definable over $M$ (in the sense that both the domain of $\PP$ and the relation $\leq_\PP$ 
are definable over the model $\langle M,\in\rangle$). By $\ZF^-$ we mean the usual axioms of $\ZF$ without the power set axiom,
however including Collection instead of Replacement.\footnote{Note that in the absence of the power set axiom, Collection does not follow from Replacement and 
many important set-theoretical results can consistently fail in the weaker theory. For further details, consult \cite{ZFCminus}.} In this situation, we say that a filter $G$ on $\PP$ is 
$\PP$-generic over $M$ if $G$ meets every dense subset of $\PP$ that is definable over $M$. We let $M^\PP$ denote the collection 
of all $\PP$-names contained in $M$. 
Since $M\models\ZF^-$, $M^\PP$ is definable over $M$. Given a $\PP$-generic filter $G$ over $M$, we define $M[G]=\Set{\sigma^G}{\sigma\in M^\PP}$ 
to be the corresponding class generic extension. 
Finally, given a formula 
$\varphi(v_0,\ldots,v_{n-1})$ in the language $\mathcal{L}_\in$ of set theory, a 
condition $p$ in $\PP$ and $\sigma_0,\ldots,\sigma_{n-1}\in M^\PP$, we let 
$p\Vdash^M_\PP\varphi(\sigma_0,\ldots,\sigma_{n-1})$ denote the statement that $\varphi(\sigma_0^G,\ldots,\sigma_{n-1}^G)$ holds in $M[G]$ whenever $G$ is a $\PP$-generic filter over $M$ with $p\in G$. 

The \emph{forcing theorem} 
is the most fundamental result in the theory of forcing with set-sized partial orders. 
The work presented in this paper is motivated by the question whether fragments of this result also hold for class forcing. Given a countable transitive model $M$ of some theory extending $\ZF^-$, a partial order $\PP$ definable over $M$ and an $\mathcal{L}_\in$-formula $\varphi(v_0,\ldots,v_{n-1})$, we will consider the following fragments of the forcing theorem for notions of class forcing.

\begin{enumerate}
 \item We say that \emph{$\PP$ satisfies the definability lemma for $\varphi$ over $M$}  if the set $$\Set{\langle p,\sigma_0,\ldots,\sigma_{n-1}\rangle\in\PP\times M^\PP\times\ldots\times M^\PP}{p\Vdash^M_\PP\varphi(\sigma_0,\ldots,\sigma_{n-1})}$$ is definable over $M$. 

 \item We say that \emph{$\PP$ satisfies the truth lemma for $\varphi$ over $M$} if for all $\sigma_0,\ldots,\sigma_{n-1}\in M^\PP$ and every $\PP$-generic filter $G$ over $M$ with the property that $\varphi(\sigma_0^G,\ldots,\sigma_{n-1}^G)$ holds in $M[G]$, there is a condition $p\in G$ with $p\Vdash^M_\PP\varphi(\sigma_0,\ldots,\sigma_{n-1})$. 

 \item We say that \emph{$\PP$ satisfies the forcing theorem for $\varphi$ over $M$} if $\PP$ satisfies both the definability and the truth lemma for $\varphi$ over $M$. 
\end{enumerate}

Another basic result that is fundamental for the development of forcing with set-sized partial orders is the existence of a Boolean completion for separative partial orders,\footnote{A partial order (or, more generally, a preorder) $P$ is \emph{separative} if for all $p,q\in P$, if $p\not\le q$ then there exists $r\le p$ such that $r\perp q$.} its uniqueness up to isomorphism and the equality of the corresponding forcing extensions. This motivates our interest in the following notions.

\begin{enumerate}
 \setcounter{enumi}{3}

 \item Let $\BB$ be a Boolean algebra that is definable over $M$ (in the sense that both the domain of $\BB$ and all Boolean operations are definable over the model $\langle M,\in\rangle$). We say that $\BB$ is \emph{$M$-complete} if $\sup_\BB A$ exists in $\BB$ for every $A\subseteq\BB$ with $A\in M$. 

 \item We say that \emph{$\PP$ has a Boolean completion in $M$} if there is an $M$-complete Boolean algebra $\BB$ and an injective dense embedding $\pi$ from $\PP$ into $\BB\setminus\{0_\BB\}$ such that both $\BB$ and $\pi$ are definable over $M$.

 \item We say that \emph{$\PP$ has a unique Boolean completion in $M$} if $\PP$ has a Boolean completion $\BB_0$ in $M$ and for every other Boolean completion $\BB_1$ of $\PP$ in $M$, there is an isomorphism in $\V$ between $\BB_0$ and $\BB_1$ which fixes $\PP$. 
\end{enumerate}

In standard accounts on class forcing (see \cite{MR1780138}) one studies generic extensions with additional predicates
for the generic filter and the ground model and focuses on \emph{pretame} (resp.\ \emph{tame}) notions of forcing, i.e.\ notions of forcing which preserve $\ZF^-$ (resp. $\ZF$)
with respect to these predicates. In particular, Sy Friedman shows in \cite{MR1780138} that if the ground model satisfies $\ZF$, then every pretame forcing satisfies the forcing theorem. The converse is false:
A simple notion of class forcing which does not preserve Replacement is $\Col(\omega,\On)$, the class
of all finite partial functions from $\omega$ to $\On$, ordered by reverse inclusion. However, this notion of forcing still satisfies the forcing theorem (see \cite[Proposition 2.25]{MR1780138} or Section \ref{section:Approach} of this paper). In this paper, we will mostly investigate properties of non-pretame notions of class forcing.


In the remainder of this introduction, we present the results of this paper. We will later prove these statements in a more 
general setting than the one outlined above. This will allow us to also prove results for models containing more second-order 
objects, like models of \emph{Kelley-Morse class theory $\KM$} (see \cite{antos2015class}). We will outline this setting in Section \ref{section:general setting}.

Throughout this paper, we will work in a model $\V$ of $\ZFC$ and, given a set $M$ and a recursively enumerable theory $\Th$ extending $\ZF^-$, we say that ``\emph{$M$ is a model of $\Th$}'' to abbreviate the statement that $M$ satisfies every axiom of $\Th$ in $\V$ with respect to some formalized satisfaction relation (as in {\cite[Chapter 3.5]{drake1974set}}). Note that, in general, the assumption that such a model $\V$ containing a transitive countable set $M$ of $\Th$ exists is stronger than the assumption that $\Th$ is consistent. If $\Th\supseteq\ZF$, then the results of this paper can also be proven in the setting of {\cite[Ch.\ VII, \S 9, Approach (1b)]{MR597342}}, where one works with a language that extends the language of set theory by a constant symbol $\dot{M}$, and a model of a theory in this language that extends $\Th$ by the scheme of axioms stating that every axiom of $\Th$ holds relativized to $\dot{M}$. The consistency of this theory is equivalent to the consistency of $\Th$. For this 
paper, we have nonetheless chosen the first approach, because it makes many arguments more intuitive and easier to state.


\subsection{Positive results}

The results of this paper will show that the forcing theorem (in fact both the definability of the forcing
relation and the truth lemma), the amenability of the forcing relation and the existence of a 
(unique) Boolean completion can fail for class 
forcing. The following two positive results show that non-trivial implications hold between some of these properties. The first result shows that a failure of the forcing theorem already implies a failure of the definability lemma for atomic formulae. This result is proven by carefully mimicking the induction steps in the proof of the forcing theorem for set forcing.

\begin{theorem}\label{theorem:ForcingTheorem}
 Let $M$ be a countable transitive model of $\ZF^-$ and let $\PP$ be a partial order that is definable over $M$. 
 If $\PP$ satisfies the definability lemma for either \anf{$v_0\in v_1$} or \anf{$v_0=v_1$} over $M$, then $\PP$ satisfies the 
 forcing theorem for all $\mathcal{L}_\in$-formulae over $M$. 
\end{theorem}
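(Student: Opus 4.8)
The plan is to reduce everything to the two atomic formulae and then lift to arbitrary $\mathcal{L}_\in$-formulae by the usual recursion on complexity, throughout exploiting that a $\PP$-generic filter over $M$ meets every dense subset of $\PP$ that is \emph{definable over $M$}. Write $\Vdash$ for the semantic relation $\Vdash^M_\PP$. I assume without loss of generality that the definability lemma holds for \anf{$v_0\in v_1$}; the case of \anf{$v_0=v_1$} is entirely symmetric, with the roles of the two atomic relations interchanged. Using the given definable relation I introduce a \emph{recursively defined} relation $\Vdash^*$ for \anf{$v_0=v_1$} by mimicking the set-forcing recursion but plugging in the full, already-definable relation $\Vdash$ for \anf{$v_0\in v_1$}: declare $p\Vdash^*\sigma=\tau$ to hold iff for every $\langle\rho,s\rangle\in\sigma$ the set $\{q\le p : q\perp s \text{ or } q\Vdash\rho\in\tau\}$ is dense below $p$, and symmetrically with $\sigma$ and $\tau$ interchanged. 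Since $\Vdash$ for \anf{$v_0\in v_1$} is definable over $M$, the quantifiers over the sets $\sigma,\tau$ are bounded, and \anf{dense below $p$} is first-order, the relation $\Vdash^*$ is definable over $M$; note also that the displayed sets are downward closed, which is what makes them available to generic filters.

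The heart of the argument is to show that $\Vdash^*$ agrees with the semantic relation $\Vdash$ for \anf{$v_0=v_1$} and that the truth lemma holds for both atomic formulae; I would prove this by a single well-founded induction on the ranks of the names involved, exactly mimicking the induction in the set-forcing proof. The \emph{soundness} directions---that $p\Vdash^*\sigma=\tau$ implies $\sigma^G=\tau^G$ for every generic $G\ni p$, and the analogous statement reading off \anf{$\sigma\in\tau$} from the given relation---require only genericity: one meets the relevant definable dense set inside $G$ and reads off the conclusion. This is the step where the hypothesis does its essential work, since in class forcing a generic filter is only guaranteed to meet dense sets that are \emph{definable over $M$}, and it is precisely the definability of the atomic relation that keeps the dense sets arising in the recursion definable.

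The \emph{truth} directions go the opposite way and are carried by the induction. To prove the truth lemma for \anf{$v_0\in v_1$} I would, given $\sigma^G\in\tau^G$, pick $\langle\rho,s\rangle\in\tau$ with $s\in G$ and $\sigma^G=\rho^G$, apply the inductive truth lemma for \anf{$v_0=v_1$} at the strictly smaller instance $\langle\sigma,\rho\rangle$ to obtain $t\in G$ with $t\Vdash^*\sigma=\rho$, and then take a common refinement $p\in G$ of $t$ and $s$, which forces $\sigma\in\tau$. For the truth lemma for \anf{$v_0=v_1$} I would, given $\sigma^G=\tau^G$, show that the set of conditions that either satisfy $\Vdash^*\sigma=\tau$ or witness a defect of the form $\langle\rho,s\rangle\in\sigma$ with $p\le s$ and $p$ forcing $\rho\notin\tau$ (and symmetrically) is dense, and then meet it inside $G$; the soundness direction then rules out every defect, leaving a condition forcing $\sigma=\tau$. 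Here I would use that, by the inductive truth lemma for \anf{$v_0\in v_1$} at lower rank, \anf{$p$ forces $\rho\notin\tau$} is equivalent to \anf{no $q\le p$ satisfies $q\Vdash\rho\in\tau$}, which is a definable condition, so the whole decision set is definable over $M$ and hence met by the generic filter.

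Finally, once both atomic formulae satisfy the forcing theorem, I would extend to all $\mathcal{L}_\in$-formulae by recursion on complexity. Definability is immediate for the cases of $\neg$, $\wedge$ and $\exists$ given the usual forcing clauses; for the truth lemma the only nontrivial point is again density of the relevant definable set---the set deciding $\psi$ in the case of $\neg\psi$, and the set of conditions forcing $\psi(\mu)$ for some name $\mu$ in the case of $\exists$---which, being definable over $M$ by the inductive hypothesis, is met by $G$. The main obstacle is the atomic base case: one must thread the truth lemmas for \anf{$v_0\in v_1$} and \anf{$v_0=v_1$} through a single rank-induction, and at every step verify that the dense sets produced remain definable over $M$. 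This definability is exactly what can fail for an arbitrary class forcing, and the hypothesis that one atomic relation is definable is what rescues it; carrying this bookkeeping carefully, rather than any new idea, is the crux.
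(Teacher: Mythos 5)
Your proposal follows essentially the same route as the paper's proof in the main case. The paper likewise introduces an auxiliary relation $\Vdash^{*}$ for inclusion (hence equality), defined from the given definable membership relation by density conditions; your clause \anf{for every $\langle\rho,s\rangle\in\sigma$ the class $\{q\le_\PP p: q\perp s \text{ or } q\Vdash\rho\in\tau\}$ is dense below $p$} is equivalent to the paper's clause \anf{for all $\langle\rho,r\rangle\in\sigma$ and all $q\le_\PP p,r$ the class $\{s: s\Vdash\rho\in\tau\}$ is dense below $q$}. The paper then runs an induction on names (lexicographically on $\langle\rank(\sigma)+\rank(\tau),\rank(\sigma)\rangle$; your induction on the rank sum also closes, since every inductive appeal is to a pair of strictly smaller sum) establishing exactly your two kinds of statements: agreement of $\Vdash^{*}$ with the semantic relation, and dense definable classes of conditions deciding the atomic statements, which is the truth lemma. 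The lifting to arbitrary $\mathcal{L}_\in$-formulae by the usual recursive clauses, with the truth lemma obtained by meeting the (now definable) deciding classes, is also the paper's argument.

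The one point where your write-up is wrong as stated is the opening reduction: the case where \anf{$v_0=v_1$} is the definable relation is \emph{not} \anf{entirely symmetric, with the roles of the two atomic relations interchanged}. Membership and equality do not unfold into one another in mirror-image ways: equality unfolds \emph{universally} (every entry of $\sigma$ must be forced into $\tau$, and vice versa), whereas membership unfolds \emph{existentially} (some entry $\langle\rho,r\rangle\in\tau$ must have $r$ in the filter and $\rho$ forced equal to $\sigma$). Literally interchanging $\in$ and $=$ in your definition of $\Vdash^{*}$ therefore produces a clause that does not define anything meaningful. What one does instead --- and this is exactly how the paper handles this case, also in a single sentence --- is to define a relation for membership from the given equality relation by the existential density stipulation: $p\Vdash^{*}\sigma\in\tau$ if and only if the class $\{q\in\PP\mid\exists\langle\rho,r\rangle\in\tau\,[q\le_\PP r\wedge q\Vdash\sigma=\rho]\}$ is dense below $p$, and then rerun the same rank induction with this definable relation in place of the hypothesis. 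The repair is routine (soundness is again immediate from genericity and definability, and the truth lemmas go through as in your sketch), but it is a dual reduction rather than a symmetric one, and a complete write-up has to supply it.
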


In Section \ref{section:Approach}, we will present a criterion that will allow us to show that many notions of class forcing satisfy the definability lemma for atomic formulae and thus, by the above result, the full forcing theorem (that is, the forcing theorem for all $\mathcal{L}_\in$-formulae).

The next result shows that the existence of a Boolean completion is equivalent to the validity of the forcing theorem for all $\mathcal{L}_\in$-formulae. We will prove this result by showing that the forcing relation for the quantifier-free infinitary language $\mathcal{L}_{\On,0}$ of set theory, allowing set-sized conjunctions and disjunctions and also allowing reference to a predicate for the generic filter, is definable under either assumption listed in the theorem.

\begin{theorem}\label{theorem:BooleanCompletion}
 Let $M$ be a countable transitive model of $\ZF^-$ and let $\PP$ be a separative partial order that is definable over $M$. If either the power set axiom holds in $M$ or there is a well-ordering of $M$ that is definable over $M$, then the following statements are equivalent.
 \begin{enumerate}
  \item $\PP$ satisfies the forcing theorem for all $\mathcal{L}_\in$-formulae over $M$. 
  \item $\PP$ has a Boolean completion in $M$. 
 \end{enumerate}
\end{theorem}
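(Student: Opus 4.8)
The plan is to route both implications through the forcing relation for the infinitary quantifier-free language $\mathcal{L}_{\On,0}$, whose atomic formulae are the statements \anf{$p\in\dot G$} for $p\in\PP$ and which is closed under negation and under set-sized conjunctions and disjunctions $\bigwedge_{i\in I}\varphi_i$, $\bigvee_{i\in I}\varphi_i$ with index set $I\in M$. The bridge to $\mathcal{L}_\in$ has two halves. On one hand, the atomic formulae \anf{$\sigma\in\tau$} and \anf{$\sigma=\tau$} translate, by the usual recursion on the ranks of the names $\sigma,\tau$, into $\mathcal{L}_{\On,0}$-formulae lying in $M$. On the other hand, the truth in $M[G]$ of a single $\mathcal{L}_{\On,0}$-formula $\varphi$ coded by some $c\in M$ and using atoms from a set $A\in M$ depends only on $G\cap A$, and is therefore captured, via a satisfaction recursion over the set $c$, by an $\mathcal{L}_\in$-formula with parameters $c$ and the set name $\dot G\restriction A=\Set{\langle\check p,p\rangle}{p\in A}\in M^\PP$.

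For the direction (2)$\Rightarrow$(1), I would fix the dense embedding $\pi\colon\PP\to\BB\setminus\{0_\BB\}$ into the $M$-complete Boolean algebra $\BB$ and define Boolean values $\llbracket\varphi\rrbracket\in\BB$ for $\mathcal{L}_{\On,0}$-formulae by the recursion $\llbracket p\in\dot G\rrbracket=\pi(p)$, $\llbracket\bigvee_{i\in I}\varphi_i\rrbracket=\sup_\BB\Set{\llbracket\varphi_i\rrbracket}{i\in I}$, $\llbracket\bigwedge_{i\in I}\varphi_i\rrbracket=\inf_\BB\Set{\llbracket\varphi_i\rrbracket}{i\in I}$ and $\llbracket\neg\varphi\rrbracket=-\llbracket\varphi\rrbracket$. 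Each supremum and infimum ranges over a subset of $\BB$ lying in $M$, so $M$-completeness guarantees existence, and the recursion is definable over $M$ since $M\models\ZF^-$. The core step is then to prove by induction on $\varphi$ the equivalence $p\Vdash^M_\PP\varphi\Leftrightarrow\pi(p)\leq_\BB\llbracket\varphi\rrbracket$, which simultaneously delivers definability and the truth lemma for $\mathcal{L}_{\On,0}$; the negation and disjunction cases use the genericity of $G$ together with the density of $\pi[\PP]$, via the fact that for fixed $b\in\BB$ the set $\Set{q\in\PP}{\pi(q)\leq_\BB b\text{ or }\pi(q)\leq_\BB -b}$ is dense and definable over $M$. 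Translating the atomic $\mathcal{L}_\in$-formulae into $\mathcal{L}_{\On,0}$ now yields the definability lemma for \anf{$v_0\in v_1$} and \anf{$v_0=v_1$}, and Theorem \ref{theorem:ForcingTheorem} upgrades this to the forcing theorem for all $\mathcal{L}_\in$-formulae.

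For the direction (1)$\Rightarrow$(2), I would first transfer the forcing theorem from $\mathcal{L}_\in$ to $\mathcal{L}_{\On,0}$ using the second half of the bridge, obtaining that the relation $p\Vdash^M_\PP\varphi$ for $\varphi\in\mathcal{L}_{\On,0}$ is definable over $M$. I would then build $\BB$ as the collection of Boolean values, that is, equivalence classes of $\mathcal{L}_{\On,0}$-formulae under $\varphi\sim\psi\Leftrightarrow\forall p\,(p\Vdash\varphi\leftrightarrow p\Vdash\psi)$, ordered by $[\varphi]\leq[\psi]\Leftrightarrow\forall p\,(p\Vdash\varphi\to p\Vdash\psi)$ and equipped with the operations induced by $\vee,\wedge,\neg$; the map $\pi(p)=[\,p\in\dot G\,]$ is the candidate dense embedding, and separativity of $\PP$ gives that $\pi$ is injective with dense range. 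Definability of $\mathcal{L}_{\On,0}$-forcing makes $\leq$ and the operations definable over $M$, and $M$-completeness is witnessed by disjunctions, the supremum of a set $\Set{[\varphi_i]}{i\in I}$ with $I\in M$ being $[\bigvee_{i\in I}\varphi_i]$.

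The main obstacle, and the place where the two alternative hypotheses are indispensable, is to realise $\BB$ as a genuine class definable over $M$ with all set-indexed suprema existing, rather than as a collection whose members---the equivalence classes $[\varphi]$---are themselves proper classes. Concretely, I need a definable choice of canonical representatives so that $\BB$ is parametrised by a definable class of codes in $M$, and I need the disjunction witnessing $\sup_\BB\Set{[\varphi_i]}{i\in I}$ to be formable as a single $\mathcal{L}_{\On,0}$-formula in $M$. A definable well-ordering of $M$ supplies canonical representatives directly; in its absence the power set axiom lets me replace each class $[\varphi]$ by the set of conditions forcing it and restrict to a set of formulae of bounded complexity over any fixed set of atoms, again yielding set-sized codes and the required suprema. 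Checking that $\sim$ is a congruence and that the induced structure is a Boolean algebra satisfying the density and $M$-completeness clauses is then routine, and I expect essentially all of the delicate bookkeeping to be concentrated in this representability issue.
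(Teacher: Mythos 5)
Your overall architecture coincides with the paper's (Theorem \ref{thm:compl forcing thm}): both directions are routed through the quantifier-free infinitary language, with (2)$\Rightarrow$(1) done via Boolean values and Theorem \ref{theorem:ForcingTheorem}, and (1)$\Rightarrow$(2) via a Lindenbaum-type quotient whose representability is where the two extra hypotheses enter. Your direction (2)$\Rightarrow$(1) is sound: the induction $p\Vdash^M_\PP\varphi\Leftrightarrow\pi(p)\leq_\BB\llbracket\varphi\rrbracket$ combined with the rank-recursive translation of \anf{$\sigma\in\tau$} and \anf{$\sigma=\tau$} is a repackaging of the paper's direct recursion defining $\llbracket\sigma\in\tau\rrbracket$ and $\llbracket\sigma=\tau\rrbracket$ inside $\BB$, and all satisfaction invoked there is the external one, computed in $\V$.

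The genuine gap is in your bridge from (1) to the infinitary forcing theorem. You want to capture truth of $\varphi^G$ in $M[G]$ by a single $\L_\in$-formula $\mathrm{Sat}$ with parameters $\check{c}$ and $\dot{G}\upharpoonright A$, ``via a satisfaction recursion over the set $c$''. But that recursion must be carried out \emph{inside} $M[G]$: however you formalize $\mathrm{Sat}$ (\anf{some solution of the recursion assigns $1$ to the root} or \anf{every solution does}), its correctness presupposes that the solution --- a function on the, in general transfinitely deep, subformula tree of $c$ --- is an element of $M[G]$. Class-generic extensions of models of $\ZF^-$ need not satisfy the fragment of $\ZF^-$ (Collection/Separation) needed to glue such a function together; the paper itself points out that its collapse examples destroy Replacement and even Separation, and nothing in hypothesis (1) repairs this. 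Nor can you build a $\PP$-name for the satisfaction function in $M$: the natural name requires either the infinitary forcing relation (circular) or set-sized maximal antichains for mixing, which notions of class forcing need not possess. This is precisely what the paper's Lemma \ref{lem:names uft} is designed to circumvent: working entirely in $M$, where $\ZF^-$ does hold, it assigns to each code $\gbr{\varphi}$ a pair of names $\nu_\gbr{\varphi},\mu_\gbr{\varphi}$ with $\one_\PP\Vdash_\PP(\varphi\lr\nu_\gbr{\varphi}=\mu_\gbr{\varphi})$, so that $p\Vdash_\PP\varphi$ iff $p\Vdash_\PP\nu_\gbr{\varphi}=\mu_\gbr{\varphi}$; definability and the truth lemma for $\L_{\On,0}^{\Vdash}(\PP,M)$ then follow from the forcing theorem for the single formula \anf{$v_0=v_1$}, with no recursion performed inside $M[G]$. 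Your proof needs this lemma or an equivalent substitute.

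A second, smaller defect concerns the power set case of the representability issue. ``Replacing each class $[\varphi]$ by the set of conditions forcing it'' does not work: $\Set{p\in\PP}{p\Vdash_\PP\varphi}$ is a proper class, and even its restriction to a fixed set of atoms $A$ is not a complete invariant --- for incompatible $q,r\in A$, the formulae \anf{$\check{q}\in\dot{G}$} and \anf{$\neg(\check{r}\in\dot{G})$} are forced by exactly the same conditions in $A$, yet are inequivalent whenever some condition is incompatible with both. What does work is Scott's trick applied to codes: under the power set axiom, the codes of minimal rank in each equivalence class form a set, and these sets constitute a definable class of representatives, with suprema formed by taking disjunctions over unions of representative sets. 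This is exactly how the paper derives representatives choice from the power set axiom in Section \ref{section:general setting}. Your well-order case is correct as stated.
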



\subsection{Negative results}

In the following, we present results showing that each of the properties considered above can fail for class forcing. The first result shows that there is always a notion of class forcing that does not satisfy the definability lemma. The proof of this result uses a notion of class forcing that was introduced by Sy Friedman and that is mentioned in \cite[Remark 1.8]{MR1976584}. We will present and study this notion of forcing in detail in Section \ref{examples}.

\begin{theorem}\label{theorem:FailureDefLemma}
 Let $M$ be a countable transitive model of $\ZF^-$.  Then there is a partial order $\PP$ that is definable over $M$ and does not satisfy the forcing theorem for atomic formulae over $M$. 
\end{theorem}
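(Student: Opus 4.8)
The plan is to reduce the statement to a failure of the definability lemma and then to exploit Tarski's theorem on the undefinability of truth. By Theorem~\ref{theorem:ForcingTheorem}, if $\PP$ satisfied the definability lemma for \anf{$v_0\in v_1$} over $M$, then it would satisfy the forcing theorem for all $\mathcal{L}_\in$-formulae over $M$, and in particular for \anf{$v_0\in v_1$}. Since the forcing theorem for an atomic formula subsumes its definability lemma, it therefore suffices to produce a partial order $\PP$ that is definable over $M$ and \emph{fails} the definability lemma for \anf{$v_0\in v_1$}; this alone witnesses that $\PP$ does not satisfy the forcing theorem for atomic formulae.

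For the forcing and the key equivalence, I would use the notion of class forcing of Sy Friedman from \cite[Remark~1.8]{MR1976584} (defined in detail in Section~\ref{examples}), namely a partial order $\PP$ whose generic object $G$ reserves, for each pair $\langle\gbr{\varphi},a\rangle$ consisting of the Gödel code of an $\mathcal{L}_\in$-formula $\varphi(v_0)$ and a parameter $a\in M$, a designated region into which $G$ may code a single bit. The crucial feature is that the domain and ordering of $\PP$ are definable over $\langle M,\in\rangle$ by a single formula, purely combinatorially and \emph{without} any reference to satisfaction in $M$, so that $\PP$ is definable over $M$. Attached to this I would define, \emph{uniformly} in $\langle\gbr{\varphi},a\rangle$ by one $\mathcal{L}_\in$-formula, two $\PP$-names $\sigma_{\varphi,a}$ and $\tau_{\varphi,a}$ that merely read off the bit coded in the region of $\langle\gbr{\varphi},a\rangle$, so that $\sigma_{\varphi,a}^G\in\tau_{\varphi,a}^G$ holds exactly when $G$ codes a $1$ there. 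The heart of the matter is then the instance-wise equivalence
\[
 1_\PP\Vdash^M_\PP\sigma_{\varphi,a}\in\tau_{\varphi,a}\quad\Longleftrightarrow\quad M\models\varphi(a).
\]
To obtain it, I would show that for each \emph{fixed} formula $\varphi$ the class of conditions that have already coded the truth value of $\varphi(a)$ into the region of $\langle\gbr{\varphi},a\rangle$ is dense; since $\varphi$ is fixed, the predicate $a\mapsto\varphi(a)$ is definable over $M$, so this dense class is definable over $M$ and is therefore met by every $\PP$-generic filter, whence every generic codes the correct value and the displayed equivalence follows.

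For the contradiction, suppose $\PP$ satisfied the definability lemma for \anf{$v_0\in v_1$}, say via an $\mathcal{L}_\in$-formula $\Phi$ defining $\Set{\langle p,\sigma,\tau\rangle}{p\Vdash^M_\PP\sigma\in\tau}$ over $M$. Composing $\Phi$ with the uniformly definable assignment $\langle\gbr{\varphi},a\rangle\mapsto\langle 1_\PP,\sigma_{\varphi,a},\tau_{\varphi,a}\rangle$ and invoking the displayed equivalence, we would obtain that $\Set{\langle\gbr{\varphi},a\rangle}{M\models\varphi(a)}$ is definable over $\langle M,\in\rangle$, i.e.\ that $M$ defines its own satisfaction relation, contradicting Tarski's theorem. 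Hence the definability lemma for \anf{$v_0\in v_1$} fails, and by the reduction above $\PP$ does not satisfy the forcing theorem for atomic formulae.

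The main obstacle is not the contradiction, which uses only the assumed definition $\Phi$ together with the uniformly definable names, but the construction realising the key equivalence with a \emph{definable} $\PP$. The tension is that genericity must pin each region down to the correct truth value while the poset itself must be definable over $M$ and so cannot refer to the (undefinable) satisfaction relation; the escape is that no single $M$-definable requirement enforces correctness everywhere, but a \emph{separate} $M$-definable dense class does so for each fixed $\varphi$, and genericity meets all of them. Arranging this so that meeting these dense classes forces the correct bit from \emph{every} condition—ruling out conditions that have committed to a wrong value—is precisely what Friedman's forcing is engineered to achieve, and I expect it to be the technical core of the argument carried out in Section~\ref{examples}.
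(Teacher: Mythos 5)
Your overall skeleton matches the paper's: reduce the forcing theorem for atomic formulae to the definability lemma, exhibit a definable forcing for which that definability would yield a truth predicate for $M$ definable over $M$, and contradict Tarski's theorem. The final contradiction step is fine. The genuine gap is in the core technical step, the equivalence $\one_\PP\Vdash^M_\PP\sigma_{\varphi,a}\in\tau_{\varphi,a}\Longleftrightarrow M\models\varphi(a)$, and the mechanism you sketch for it cannot work. If the poset has ``regions'' indexed by $\langle\gbr{\varphi},a\rangle$ into which conditions may freely code a bit, then the class of conditions coding the \emph{correct} bit is not dense: a condition that has already committed the wrong bit has no extension in that class, so genericity does not help, and in fact for every pair there are generics coding $0$ and generics coding $1$, so $\one_\PP$ forces neither and your displayed equivalence fails. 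Conversely, if the poset forbade wrong commitments, its definition would have to refer to satisfaction in $M$, destroying definability. Your last paragraph names exactly this tension but resolves it only by asserting that Friedman's forcing is ``engineered to achieve'' it --- and that assertion mischaracterizes the forcing.

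What $\FF^M$ actually does (Definition \ref{def:friedman forcing}) is different: conditions are finite acyclic relations $e_p$ on finite $d_p\subseteq\omega$, optionally labelled by an injection $f_p\colon d_p\to M$ respecting $\in$. No condition can commit anything ``wrong'', since every finite acyclic relation is realizable by actual sets (Lemma \ref{lemma:density dom}), and genericity makes the generic relation $E$ on $\omega$ isomorphic to $\langle M,\in\rangle$ (Lemma \ref{lemma:friedman forcing}). Truth of $\varphi(a)$ in $M$ is then \emph{not} a single bit read off by one atomic statement; it becomes a statement about $\langle\omega,E\rangle$ whose quantifiers range over $\omega$, and the paper converts it into the forcing language by turning quantifiers into set-sized disjunctions and conjunctions, i.e.\ into infinitary quantifier-free formulae (Lemma \ref{lem:varphi*}). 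The bridge back to \emph{atomic} definability is the machinery of Lemma \ref{lem:names uft} and Theorem \ref{thm:compl forcing thm}: for every infinitary formula one constructs a pair of $\PP$-names whose forced equality is equivalent to that formula, so that definability of the atomic forcing relation yields definability of the infinitary forcing relation, and hence the definable truth predicate $\Set{\langle\gbr{\varphi},x\rangle}{p^x\Vdash^\MM_\FF\varphi^*}$. This names-for-infinitary-formulae construction is the technical core that your proposal leaves unsupplied; without it, or something equivalent, the step from the definability lemma for atomic formulae to a definable truth predicate does not go through.
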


Our next result shows that even stronger failures of the definability lemma are possible for the above forcing. 
Its proof relies on so-called \emph{Paris models}, i.e.\ $\in$-structures $\mathcal{M}$ with the 
property that each ordinal of $\mathcal M$ is definable in $\mathcal{M}$ by a formula without parameters. Such models have been
considered by Ali Enayat in \cite{MR2140616}.
The stronger concept of \emph{pointwise definable models} (that is, $\in$-structures over which each of their 
elements is definable by a formula without parameters) was studied in depth in \cite{MR3087066}. Note that the existence 
of a countable transitive model of $\ZFC$ yields the existence of a countable transitive Paris model satisfying the axioms of $\ZFC$
-- this follows from \cite[Theorem 11]{MR3087066}, where it is shown that every countable 
transitive model of $\ZFC$ has a pointwise definable class forcing extension. However, in Section \ref{sec:ftl} we will, 
for the benefit of the reader, sketch a simplified argument to verify (the weaker statement) that certain countable 
transitive models of $\ZFC$ have class forcing extensions which are Paris models. 


\begin{theorem}\label{theorem:FailureAmenability}
Let $M$ be a countable transitive Paris model with $M\models\ZF^-$. Then there is a partial order $\PP$ such that 
$\PP$ is definable over $M$ and the $\PP$-forcing relation for $\anf{v_0=v_1}$ is not $M$-amenable, i.e.
there is a set $x\in M$ such that 
$$\{\langle p,\sigma,\tau\rangle\in\PP\times M^\PP\times M^\PP\mid p\Vdash_\PP^M\sigma=\tau\}\cap x$$
is not an element of $M$. 
\end{theorem}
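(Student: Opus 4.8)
The plan is to use the very forcing notion $\PP$ from Theorem~\ref{theorem:FailureDefLemma}, i.e.\ Friedman's forcing as analysed in Section~\ref{examples}, and to turn the failure of definability established there into a failure of amenability by feeding in the extra structure of a Paris model. The first task is to reread the computation of the forcing relation in Section~\ref{examples} and to isolate from it the following data, all defined by a recursion internal to $M$ (and hence giving genuine elements of $M$ once one restricts to a set of indices): an $M$-definable sequence $\langle\rho_n\mid n\in\omega\rangle$ of names in $M^\PP$ together with, for each pair $n,m\in\omega$, an $M$-definable name $\mu_{n,m}\in M^\PP$ for the ordinal maximum of the interpretations of $\rho_n$ and $\rho_m$, such that for every $n\in\omega$ the value $\rho_n^G$ is, independently of the generic $G$, the ordinal defined in $M$ by the $\L_\in$-formula with Gödel number $n$ if that formula defines an ordinal, and $0$ otherwise. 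The point of Section~\ref{examples} is precisely that the forcing relation must decode this assignment of ordinals to formulas, and it is this entanglement with the satisfaction relation of $M$ that makes it undefinable over $M$.

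The new input is the hypothesis that $M$ is a Paris model: every ordinal of $M$ is then defined by some parameter-free formula, so the map $e\colon\omega\to\On^M$, $e(n)=\rho_n^G$, is a total surjection. Consequently the relation
$$L=\set{\langle n,m\rangle\in\omega\times\omega}{e(n)\le e(m)}$$
is a prewellordering of $\omega$ whose rank function maps $\omega$ onto $\On^M$. If $L$ were an element of $M$, then this rank function would be a member of $M$ as well, yielding a surjection from the set $\omega$ onto the proper class $\On^M$ and thus contradicting Replacement in $M$. Hence $L\notin M$, and I will use $L$ as the non-amenable object to be read off from the forcing relation. Note that this step genuinely uses that $M$ is Paris, since only then is $e$ surjective, and that it goes through in $\ZF^-$ without appealing to the existence of $\omega_1$.

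It remains to exhibit a single $x\in M$ for which the forcing relation for $\anf{v_0=v_1}$ computes $L$. I would fix a condition $p\in\PP$ and set
$$x=\set{\langle p,\mu_{n,m},\rho_m\rangle}{n,m\in\omega},$$
which is an element of $M$ because $p$ and the two families of names are $M$-definable and indexed by the set $\omega\times\omega$. Since $\rho_n^G$ and $\rho_m^G$ are ordinals with $\mu_{n,m}^G=\max\{\rho_n^G,\rho_m^G\}$ for every generic $G$, the equality $\mu_{n,m}=\rho_m$ is forced by $p$ exactly when $e(n)\le e(m)$; thus, along the $M$-definable injection $\langle n,m\rangle\mapsto\langle p,\mu_{n,m},\rho_m\rangle$, the intersection of the forcing relation for $\anf{v_0=v_1}$ with $x$ is nothing but $L$. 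Were this intersection an element of $M$, then $L$ would be too, by Separation, contradicting the previous paragraph; so the intersection is not in $M$, which is the desired failure of amenability. The main obstacle is the first step: one has to verify that Friedman's forcing really does supply an $M$-definable family of names whose generic values recover the ordinals defined by formulas, and that equality between the derived names $\mu_{n,m}$ and $\rho_m$ is decided by a fixed condition according to the order of these ordinals. Matching the explicit forcing relation from Section~\ref{examples} to this Paris decoding is where the substance of the argument lies; once it is in place, the prewellordering-and-Replacement argument closes the proof.
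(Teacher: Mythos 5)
Your overall strategy --- using Friedman's forcing $\FF$, reading a set-sized relation of order type $\On^M$ off the trace of the forcing relation, and contradicting Replacement in $M$ --- is exactly the shape of the paper's argument, and your endgame (the prewellordering $L$ cannot lie in $M$) is sound. But the construction you rest it on is impossible, so there is a genuine gap in the first and crucial step. You require a family $\langle\rho_n\mid n\in\omega\rangle$ of names in $M^\PP$, definable over $M$ by an internal recursion, such that $\rho_n^G$ is (independently of $G$) the ordinal defined by the $n$-th parameter-free formula. For your final set $x=\{\langle p,\mu_{n,m},\rho_m\rangle\mid n,m\in\omega\}$ to be an element of $M$, the restriction of $n\mapsto\rho_n$ to the set $\omega$ must be an element of $M$ (as you say, this follows from Replacement in $M$ if the recursion is internal). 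But then $\{\rho_n\mid n\in\omega\}$ is a set in $M$ of some rank $\alpha\in\On^M$, and since the rank of an evaluation is bounded by the (name) rank of the name, every $\rho_n^G$ is an ordinal below $\alpha$. In a Paris model the ordinals defined by parameter-free formulae are \emph{all} of $\On^M$, so no set-indexed family of names in $M$ can have evaluations realizing them; the same rank obstruction kills any weakening in which the values $\rho_n^G$ are merely cofinal in $\On^M$, which is what your surjectivity of $e$ and hence your proof that $L\notin M$ requires. So the data you propose to ``isolate'' from the analysis of $\FF$ (which, incidentally, is carried out in Section \ref{sec:fr not def}, not Section \ref{examples}) cannot exist for \emph{any} notion of class forcing.

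The paper's proof dodges precisely this obstruction by inverting the roles of names and conditions: it fixes a \emph{single} pair of names $\mu,\nu\in M^\FF$ (Lemma \ref{lem:names truth pred}, built on Lemma \ref{lem:names uft}) and lets the dependence on the formula live in the \emph{conditions} $q_{\gbr{\varphi}}$, which are finite objects of uniformly bounded rank, arranged so that $q_{\gbr{\varphi}}\Vdash_\FF\mu=\nu$ if and only if $M\models\varphi$ for every sentence $\varphi$. Ordinal comparison is then expressed at the level of sentences rather than by names evaluating to the ordinals themselves: one puts $\gbr{\varphi}<\gbr{\psi}$ iff $q_{\gbr{\exists x,y\in\On[x<y\wedge\varphi(x)\wedge\psi(y)]}}$ forces $\mu=\nu$. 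With $x=\{\langle q_{\gbr{\varphi}},\mu,\nu\rangle\mid\gbr{\varphi}\in\Fm_0\}\in M$, the trace of the forcing relation on $x$ encodes the truth of all sentences in $M$, and the Paris hypothesis turns this, exactly as in your last paragraph, into a set-sized well-order of type $\On^M$, contradicting Replacement (Lemma \ref{lem:fr not amenable}). To repair your proof you would have to replace your first step by this kind of condition-side coding; as written, the proposal cannot get off the ground.
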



Next, we consider failures of the truth lemma. 
The witnessing forcing notion for the next theorem will be the two-step iteration of the above notion of class forcing that has a generic 
extension which is a Paris model and the aforementioned notion of class forcing of Sy Friedman.

\begin{theorem}\label{theorem:FailureTruthLemma}
Assume that $M$ is a countable transitive model of $\ZF^-$ such that either $M$ is uncountable in $\mathsf L[M]$ and ${}^{\omega}M\cap \mathsf{L}[M] \subseteq M$, or $M$ satisfies $\V=\mathsf L$
and there is a countable subset $\C$ of $\mathcal P(M)$ such that $\langle M,\C\rangle$ is a model of $\KM$. Then there is 
a partial order $\PP$ that is definable over $M$ and that does not satisfy the truth lemma for $\anf{v_0=v_1}$ over $M$. 
\end{theorem}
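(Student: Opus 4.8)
The plan is to let $\PP$ be the two-step iteration $\PP_0 * \dot{\QQ}$, where $\PP_0$ is the notion of class forcing constructed in Section~\ref{sec:ftl} whose generic extension is a Paris model, and $\dot{\QQ}$ is a $\PP_0$-name for the Friedman forcing of Section~\ref{examples} that witnesses Theorem~\ref{theorem:FailureDefLemma}. The two hypotheses on $M$ enter only through the first step: under either of them one obtains a $\PP_0$-generic filter $G_0$ over $M$ for which $N := M[G_0]$ is a Paris model, while keeping $\PP_0$ tame enough that it satisfies the forcing theorem over $M$. Under the first hypothesis the relevant generic is located inside $\mathsf{L}[M]$, and the closure ${}^{\omega}M \cap \mathsf{L}[M] \subseteq M$ guarantees that the names and $\omega$-sequences we build along the way remain available in $M$; under the second, the $\KM$-expansion $\langle M,\C\rangle$ supplies the class comprehension needed to run the class iteration, and $\V=\mathsf{L}$ supplies a definable well-order of $M$.

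The heart of the argument takes place in the Paris model $N$. Writing $F$ for the generic object added by $\QQ = \dot{\QQ}^{G_0}$, the Friedman forcing is arranged so that $F$ codes a candidate satisfaction relation for $N$. I would choose $\PP_0$-names $\sigma,\tau$ for two classes read off from $F$ by different definable recipes — one the raw generic, the other its self-referential reading through the coded satisfaction relation — so that the atomic statement $\sigma^{G} = \tau^{G}$ holds in $M[G]$ exactly when $F$ correctly codes the genuine satisfaction relation of $N$. Since $N$ is a Paris model, that genuine satisfaction relation is determined by the assignment $D$ sending each ordinal to the least $\mathcal{L}_\in$-formula defining it, and this assignment is \emph{not} definable over $N$ by Tarski's theorem. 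Thus $\sigma^{G}=\tau^{G}$ records a global, non-definable feature of $F$, and the definability failure of $\QQ$ from Theorem~\ref{theorem:FailureDefLemma} is exactly what prevents any single condition from securing it.

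I would then verify both halves of a truth-lemma failure, first for $\QQ$ over $N$ and then transported down the iteration. For the positive half, I would construct a $\QQ$-generic $G_1$ over $N$ along an enumeration of the $N$-definable dense sets, at each stage both meeting the next dense set (securing genericity) and steering $F$ toward correctly coding $D$; this double requirement is simultaneously satisfiable precisely because $D$ is not definable over $N$, so the coding requirement never coincides with any dense-set requirement. For the negative half, I would argue that no condition forces $\sigma=\tau$: any condition determines $F$ on only a set-sized part of its domain, so below it one can extend to a generic whose $F$ miscodes $D$, driving $\sigma=\tau$ to fail; were some condition to force the equality, correct coding of $D$, and hence the satisfaction relation of $N$, would become definable over $N$, contradicting Tarski. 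Finally I would translate $\sigma,\tau$ into genuine $\PP$-names over $M$ and use that $\PP_0$ satisfies the forcing theorem over $M$ to conclude that a condition $\langle p,\dot{q}\rangle\in G$ forcing $\sigma=\tau$ over $M$ would yield a condition $\dot{q}^{G_0}\in G_1$ forcing the equality over $N$, which the previous step excludes.

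The step I expect to be the main obstacle is the conversion of defects: Theorem~\ref{theorem:FailureDefLemma} delivers a failure of \emph{definability}, whereas what we need is a failure of \emph{truth}, and over an arbitrary model these are genuinely different phenomena. The Paris model is the device that bridges them — it turns the non-definable assignment $D$ into a coherence target that a generic can be steered to match, so that an equality which no condition can force nevertheless becomes true. Making this rigorous requires a careful density analysis inside $N$ together with the standard but delicate bookkeeping identifying $\PP_0*\dot{\QQ}$-names with $\QQ$-names over $N$ carrying $\PP_0$-names as parameters; the mimicked induction underlying Theorem~\ref{theorem:ForcingTheorem} is what licenses the translation of the forcing relation across the two steps.
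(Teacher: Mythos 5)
Your choice of forcing is the paper's: the iteration $\PP_0 * \dot\FF$ of a tame Paris-model forcing with Friedman's forcing. But the mechanism you propose for the truth-lemma failure rests on a dichotomy that is vacuous. By Lemma \ref{lemma:friedman forcing}, \emph{every} $\FF^N$-generic filter over $N=M[G_0]$ yields a relation $E$ with $\langle\omega,E\rangle\cong\langle N,\in\rangle$; hence the satisfaction relation read off from the generic is \emph{always} the genuine satisfaction relation of $N$, transported along the isomorphism. There is no generic that \anf{miscodes} the assignment $D$, so no \anf{steering} is needed in your positive half, and your negative half --- extending an arbitrary condition to a generic whose $F$ miscodes $D$ --- cannot be carried out. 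Indeed, if your names $\sigma,\tau$ satisfy $\sigma^{G}=\tau^{G}$ exactly when the coding is correct, then $\sigma^{G}=\tau^{G}$ holds for every generic, i.e.\ $\one_\QQ\Vdash\sigma=\tau$, and the truth lemma for this instance holds trivially. Note also that what you are implicitly claiming along the way --- that $\FF$ itself fails the truth lemma over the Paris model $N$, with the failure then \anf{transported down the iteration} --- is precisely what the paper leaves open (last question of the final section); and the appeal to the \emph{definability} failure of Theorem \ref{theorem:FailureDefLemma} cannot substitute for an argument here, since no implication from definability failures to truth failures is established anywhere.

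The paper's argument instead locates the undecided statement in the \emph{first} coordinate of the iteration. Via the translation machinery (Lemma \ref{lem:varphi*}, adapted so that the conditions $p\in\FF$ occurring in the translated formulae, all of which have $f_p=\emptyset$, are replaced by conditions $\langle\one_{\PP_0},\check p\rangle$ of the iteration) one builds an infinitary formula $\Phi$ such that $\Phi^{G*H}$ holds if and only if $M[G]$ is a Paris model, and Lemma \ref{lem:names uft} converts $\Phi$ into an atomic equality $\nu_{\gbr{\Phi}}=\mu_{\gbr{\Phi}}$; this is how the failure for \anf{v_0=v_1} is obtained. Choosing $G$ with $M[G]$ Paris makes the statement true in $M[G][H]$, but no condition $\langle p,\dot q\rangle$ in the generic filter can force it, because through every such $p$ there is another $\PP_0$-generic $\bar G$ with $M[\bar G]$ \emph{not} a Paris model, and over $M[\bar G]$ the formula evaluates to false in any further $\FF^{M[\bar G]}$-generic extension. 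This is exactly where the theorem's two hypotheses are spent: in the $\KM$ case, tameness preserves $\KM$ and no $\KM$-model is a Paris model; in the other case, $\sigma$-closure of $\PP_0$ in $M$ together with ${}^{\omega}M\cap\mathsf{L}[M]\subseteq M$ lets one build, inside $\mathsf{L}[M]$, a generic filter through any condition, and the resulting extension has ordinals uncountable in $\mathsf{L}[M]$, hence is not a Paris model. Your proposal assigns these hypotheses a different job (keeping the first step runnable and names available), which leaves your argument with no source of non-Paris generic extensions --- and hence no contradiction at all --- in the negative half.
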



Finally, we show that the existence and uniqueness of a Boolean completion, in a countable transitive $M\models\ZF^-$ of which there exists a well-order of order-type $\On^M$ that is definable over $M$, of a notion of forcing $\PP$ that is definable over $M$, are equivalent to $\PP$ having the \emph{$\On$-chain condition} (or simply \emph{$\On$-cc}) over $M$, that is the property that every antichain of $\PP$ that is definable over $M$ is already an element of $M$. This will easily yield the following result. 

\begin{theorem}\label{theorem:NotUniqueBoolCompl}
Let $M$ be a countable transitive model of $\ZF^-$ and suppose that there exists a global well-order of order type $\On^M$ that is definable over $M$.
Then there is a notion of class forcing which has two non-isomorphic Boolean completions in $M$. 
\end{theorem}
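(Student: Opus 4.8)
The plan is to read off the theorem from the equivalence established just above: for a separative $\PP$ that is definable over a countable transitive $M \models \ZF^-$ carrying a definable well-order of order type $\On^M$, $\PP$ has a unique Boolean completion in $M$ if and only if $\PP$ has the $\On$-cc over $M$. Granting this, it suffices to produce one separative $\PP$, definable over $M$, that has a Boolean completion in $M$ but fails the $\On$-cc. Non-uniqueness then follows from the equivalence, and combined with the existence of one completion it yields two Boolean completions of $\PP$ in $M$ between which there is no isomorphism fixing $\PP$, which is the asserted pair of non-isomorphic Boolean completions.

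For the witness I would take $\PP = \Col(\omega,\On)$, the class of finite partial functions from $\omega$ to $\On$ ordered by reverse inclusion, which is manifestly definable over $M$. I would first note that $\PP$ is separative: given $p \not\le q$, either $p$ and $q$ already disagree at some common point of their domains, so $p \perp q$ and $r=p$ works, or $q$ is defined at some $n \notin \dom{p}$, in which case extending $p$ at $n$ by a value distinct from $q(n)$ gives $r \le p$ with $r \perp q$.

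To see that $\PP$ has a Boolean completion, I would invoke the fact recalled in the introduction that $\Col(\omega,\On)$ satisfies the forcing theorem for all $\mathcal{L}_\in$-formulae over $M$ (see \cite[Proposition 2.25]{MR1780138} or Section~\ref{section:Approach}). The hypothesis of the theorem supplies a well-order of $M$ of order type $\On^M$ definable over $M$, hence a definable well-ordering of $M$, so the relevant alternative in the hypotheses of Theorem~\ref{theorem:BooleanCompletion} is satisfied and that theorem provides a Boolean completion $\BB_0$ of $\PP$ in $M$. For the failure of the $\On$-cc I would exhibit the definable antichain $A = \set{p_\alpha}{\alpha\in\On}$, where $p_\alpha = \{\langle 0,\alpha\rangle\}$; distinct conditions $p_\alpha$ disagree at $0$ and are thus incompatible, and $A$ is a proper class, so $A \notin M$ and the $\On$-cc fails. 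By the equivalence, $\PP$ does not have a unique Boolean completion in $M$, and since $\BB_0$ witnesses existence there is a further completion $\BB_1$ not isomorphic to $\BB_0$ over $\PP$.

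The real weight sits in the equivalence being invoked, specifically in the direction $\neg\,\On\text{-cc} \Rightarrow$ non-uniqueness, whose proof must construct two genuinely inequivalent completions; the mechanism is that $M$-completeness never forces the supremum of the proper-class antichain $\pi[A]$ to exist, so one completion can be arranged with $\sup_{\BB}\pi[A] = 1_\BB$ while another leaves it undefined. Since any isomorphism fixing $\PP$ would carry $\pi_0[A]$ to $\pi_1[A]$ and preserve existing suprema, this difference is exactly an invariant obstructing a $\PP$-fixing isomorphism. The only point inside the present argument that genuinely needs checking is existence (the Boolean completion $\BB_0$) rather than merely non-uniqueness, for otherwise $\neg\,\On\text{-cc}$ would be consistent with $\PP$ having no Boolean completion at all.
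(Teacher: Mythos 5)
Your proposal is correct and follows essentially the same route as the paper: the paper likewise derives this theorem from the characterization ``unique Boolean completion $\Leftrightarrow$ $\On$-cc'' together with the example $\Col(\omega,\On)^M$, which has a Boolean completion (via the forcing theorem and Theorem~\ref{theorem:BooleanCompletion}, or approachability by projections) but fails the $\On$-cc because of the definable proper-class antichain $\{\{\langle 0,\alpha\rangle\}\mid\alpha\in\On^M\}$. Your separativity check and your reading of ``non-isomorphic'' as ``no isomorphism fixing $\PP$'' both match the paper's intended argument.
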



\section{Some notions of class forcing}\label{examples}

In this section, we introduce several notions of class forcing that will later be used to verify the negative results listed in Section \ref{section:Intro}. 

\begin{notation}
Since we will frequently use names for ordered pairs, we introduce the notation
\begin{align*}
\op(\sigma,\tau)&=\{\langle\{\langle\sigma,\one_\PP\rangle\},\one_\PP\rangle,\langle\{\langle\sigma,\one_\PP\rangle,\langle\tau,\one_\PP\rangle\},\one_\PP\rangle\}
\end{align*}
for $\sigma,\tau\in M^\PP$ and $\alpha\in\On$. Clearly, $\op(\sigma,\tau)$ is the canonical 
name for the ordered pair $\langle\sigma^G,\tau^G\rangle$.
\end{notation}

\begin{definition}\label{def:Col}
 Let $M$ be a countable transitive model of $\ZF^-$.
\begin{enumerate}

 \item Let $\Col(\omega,\On)^M$ denote the partial order $\Col(\omega,\On^M)$, i.e. $\Col(\omega,\On)^M$ is the partial order whose conditions are finite partial functions $\pmap{p}{\omega}{\On^M}{par}$ ordered by reverse inclusion. 

 \item Define $\Col_*(\omega,\On)^M$ to be the (dense) suborder of $\Col(\omega,\On)^M$ consisting of all conditions $p$ with $\dom{p}\in\omega$. 

 \item Let $\Col_\geq(\omega,\On)^M$ be the notion of forcing whose conditions are finite partial functions $\pmap{p}{\omega}{\On^M\cup\{\geq\alpha\mid\alpha\in\On^M\}}{par}$,
 where $\geq\alpha$ is an element of $M$ which is not in $\On^M$ for every $\alpha\in\On^M$, and whose ordering is given by 
 $p\leq q$ if and only if $\dom{p}\supseteq\dom{q}$ and for every $n\in\dom{q}$, either
 \begin{itemize}
  \item $p(n)=q(n)$ or
  \item $q(n)$ is $\geq\alpha$ for some $\alpha\in\On^M$ and there is $\beta\geq\alpha$ such that $p(n)\in\{\beta,\geq\beta\}$.
 \end{itemize}
\end{enumerate}
\end{definition}

Note that all of these partial orders are definable over the corresponding model $M$.

\begin{notation}
Let $\PP$ be a partial order, let $\sigma$ be a $\PP$-name and let $p$ be a condition in $\PP$. Then we define the \emph{$p$-evaluation of $\sigma$} to be $$\sigma^p ~ = ~ \Set{\tau^p}{\exists q\in\PP ~ [\langle\tau,q\rangle\in\sigma ~ \wedge ~ p\leq_\PP q]}.$$
\end{notation}

The next lemma gives some basic properties of the different collapse forcings defined above.

\begin{lemma}\label{lemma:properties col}
 Let $M$ be a countable transitive model of $\ZF^-$. 
 \begin{enumerate}
  \item If $G$ is a $\Col(\omega,\On)^M$-generic filter over $M$, then for every ordinal in $M$ there is a surjection from a subset of $\omega$ onto that ordinal in $M[G]$.
 
  \item If $G$ is a $\Col_*(\omega,\On)^M$-generic filter over $M$, then $M=M[G]$.  

 \item No non-trivial maximal antichain of $\Col(\omega,\On)^M$ or $\Col_*(\omega,\On)^M$ is an element of $M$. 

 \item If $M$ is a model of $\ZFC$, then no non-trivial complete suborder of $\Col(\omega,\On)^M$ or of $\Col_*(\omega,\On)^M$ is an element of $M$.
 \item $\Col_\geq(\omega,\On)^M$ is the union of $\On^M$-many set-sized complete subforcings. 
 \end{enumerate}
\end{lemma}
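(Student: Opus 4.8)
The plan is to prove each of the five items of Lemma~\ref{lemma:properties col} separately, working directly from the definitions of the three collapse forcings and the notion of genericity over $M$.

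\medskip

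\textbf{Item (1).} I would argue that for each fixed $n\in\omega$ and each ordinal $\alpha\in\On^M$, the set $D_{n,\alpha}=\{p\in\Col(\omega,\On)^M\mid n\in\dom{p}\text{ and }p(n)\ge\alpha\text{ whenever }\ldots\}$ is dense and definable over $M$, so genericity forces the generic function $g=\bigcup G$ to be a total function $\omega\to\On^M$ whose range is cofinal in $\On^M$; more precisely, for each $\alpha$ the set $E_\alpha=\{p\mid \alpha\in\ran(p)\}$ is dense. Then, given an ordinal $\beta\in M$, the preimage $g^{-1}[\,\beta+1\,]$ is a subset of $\omega$ in $M[G]$ and $g$ restricted to it surjects onto an unbounded subset of $\beta+1$, from which a surjection from a subset of $\omega$ onto $\beta$ is easily extracted in $M[G]$. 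The only care needed is that these dense sets are genuinely definable over $M$ and that $M[G]$ can perform the required extraction, which it can since it is a model of $\ZF^-$ containing $g$.

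\medskip

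\textbf{Item (2).} For $\Col_*(\omega,\On)^M$, the key observation is that below any condition $p$ with $\dom{p}=n$, the immediate extensions are indexed by $\On^M$ (one for each value at coordinate $n$), so no single dense set can be met in a way that adds information beyond an ordinal already decided. I would show that $G$ is definable over $M$ from the generic function, but more to the point that any $\PP$-name's evaluation lies in $M$: since every maximal antichain below a condition is proper-class-sized and the generic meets each definable dense set, one checks that $\sigma^G$ can be computed inside $M$ because the relevant decisions are captured by set-sized information. The cleanest route is to verify that $\Col_*(\omega,\On)^M$ adds no new sets by showing each $\sigma^G$ equals some $p$-evaluation $\sigma^p$ for a generic $p$ — indeed the $p$-evaluation notation was just introduced for exactly this purpose — and that $\sigma^p\in M$.

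\medskip

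\textbf{Item (3).} Suppose toward a contradiction that $A\in M$ is a non-trivial maximal antichain. I would derive a contradiction from maximality: using that $A$ is a set in $M$, the domains of its conditions are bounded by some ordinal $\gamma$ (and their ranges, in the $\Col$ case, by some $\delta$), so one can explicitly construct a condition $q$ — e.g. a function taking a value outside the relevant bound — that is incompatible with every element of $A$, contradicting maximality. The same construction handles both $\Col(\omega,\On)^M$ and $\Col_*(\omega,\On)^M$. \textbf{Item (4)} then follows quickly from (3): a non-trivial complete suborder that is an element of $M$ would, using $\AC$ in $M$ to well-order it and extract a maximal antichain, yield a non-trivial maximal antichain in $M$, contradicting (3); the hypothesis $M\models\ZFC$ is what licenses the choice of such an antichain inside $M$.

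\medskip

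\textbf{Item (5).} For $\Col_\geq(\omega,\On)^M$, I would exhibit the witnessing subforcings explicitly: for each $\alpha\in\On^M$, let $\PP_\alpha$ consist of those conditions $p$ all of whose values lie in $\alpha\cup\{\ge\beta\mid\beta<\alpha\}\cup\{\ge\alpha\}$ or a similar bounded piece, and check that $\PP_\alpha$ is set-sized (being built from finite functions into a set) and is a complete subforcing in the sense that maximal antichains of $\PP_\alpha$ remain maximal in the whole forcing — here the role of the markers $\ge\alpha$ is precisely to provide, within $\PP_\alpha$, a single condition that dominates all extensions with larger values, so that incompatibility is preserved upward. Then $\Col_\geq(\omega,\On)^M=\bigcup_{\alpha\in\On^M}\PP_\alpha$, which is the desired presentation.

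\medskip

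I expect the main obstacle to be \textbf{item (5)}, specifically formulating the subforcings $\PP_\alpha$ so that they are simultaneously set-sized and genuinely complete. The subtlety is that completeness of a subforcing requires that every antichain maximal in $\PP_\alpha$ stay maximal in $\Col_\geq(\omega,\On)^M$, and this is exactly where the ordering clause involving $\ge\alpha$ (allowing a value $q(n)=\,\ge\alpha$ to be refined to any $\beta\ge\alpha$ or $\ge\beta$) must be used to show that any condition in the full forcing is compatible with some member of a $\PP_\alpha$-maximal antichain. Getting the indexing and the bound defining $\PP_\alpha$ right — so that the markers $\ge\alpha$ absorb all the unboundedly-many larger values — is the delicate point; items (1)--(4) are comparatively routine density and maximality arguments.
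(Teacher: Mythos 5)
Your overall plan follows the paper's decomposition, and items (4) and (5) are fine in outline: (4) is exactly the paper's argument, and in (5) you pick the same subforcings $\Col_\geq(\omega,\alpha)$ and correctly identify both the proof obligation and the mechanism (the paper's verification truncates a condition of the full forcing by replacing all values above $\alpha$ with the marker $\geq\alpha$, finds a compatible member of the antichain, and then merges the original large values back in). But items (1), (2) and (3) each contain a genuine problem. In (1), you justify that the surjection lies in $M[G]$ by saying $M[G]$ is ``a model of $\ZF^-$ containing $g$''; both claims are false. This forcing destroys Replacement (the paper notes this in the introduction), and the full generic function $g=\bigcup G$ is not an element of $M[G]$ at all: every element of $M[G]$ is $\sigma^G$ for a \emph{set-sized} name $\sigma\in M$ and hence has rank below $\On^M$, whereas $\rank(g)=\On^M$ since $\ran{g}$ is cofinal in $\On^M$. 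The fix is the paper's: for each fixed $\lambda$ exhibit the set-sized name $\sigma=\Set{\langle\op(\check n,\check\alpha),\{\langle n,\alpha\rangle\}\rangle}{\alpha<\lambda,\ n<\omega}$, whose evaluation your density argument then shows to be the desired surjection. In (3), your construction fails as stated: a condition $q$ ``taking a value outside the relevant bound'' is incompatible only with those $b\in A$ whose domain meets $\dom{q}$, so any $b\in A$ with $\dom{b}\cap\dom{q}=\emptyset$ is still compatible with $q$. The missing idea is the paper's observation that pairwise incompatibility forces every $b\in A$ to have domain meeting $\dom{a}$ for any fixed $a\in A$; one must then define the new condition on exactly $\dom{a}$, with value $\sup\{b(n)\mid b\in A,\ n\in\dom{b}\}+1$ at each $n\in\dom{a}$.

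Item (2) is where the real content of the lemma lies, and your sketch does not contain it: phrases like ``the relevant decisions are captured by set-sized information'' name the desired conclusion rather than prove it. The paper's argument is short but specific: given $\sigma\in M^{\Col_*(\omega,\On)^M}$, every condition occurring in $\tc(\sigma)$ has range contained in $\rank(\sigma)$; the set $D=\Set{p}{\rank(\sigma)\in\ran{p}}$ is dense and definable over $M$; and any $p\in D$ either extends or is incompatible with each condition in $\tc(\sigma)$ --- this step uses crucially that conditions of $\Col_*(\omega,\On)^M$ have initial segments of $\omega$ as domains, so two conditions whose domains are comparable are either comparable or incompatible, and a condition of $\tc(\sigma)$ with longer domain must take some value below $\rank(\sigma)$ where $p$ takes the value $\rank(\sigma)$. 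Hence $\sigma^G=\sigma^p\in M$ for any $p\in D\cap G$. Without identifying this dense set and this deciding property, the claim $M=M[G]$ is unproved; note also that your opening observation for (2), about immediate extensions being indexed by $\On^M$, is equally true of $\Col(\omega,\On)^M$, which by (1) does add new sets, so it cannot by itself be the reason.
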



\begin{proof}
 (\emph{1}) Pick  $\lambda\in\On^M$. Given $\alpha\in\On^M$, define $$D_\alpha ~ = ~ \Set{p\in\Col(\omega,\On)^M}{\exists n\in\dom{p}\,[p(n)=\alpha]}.$$ Then each $D_\alpha$ is dense in $\Col(\omega,\On)^M$ and definable over $M$. This implies that if $G$ is $\Col(\omega,\On)$-generic over $M$, then for every $\alpha\in M\cap\On$ there is an $n<\omega$ with $\{\langle n,\alpha\rangle\}\in G$. This shows that $$\sigma ~ = ~ \Set{\langle \mathsf{op}(\check{n},\check{\alpha}),\{\langle n,\alpha\rangle\}\rangle}{\alpha<\lambda, ~ n<\omega}$$ is a name for a surjection from a subset of $\omega$ onto $\lambda$.

 (2) Let $\sigma$ be a $\Col_*(\omega,\On)^M$-name in $M$. Then $\ran{p}\subseteq\rank(\sigma)$ holds for every condition $p$ in $\tc(\sigma)\cap\Col_*(\omega,\On)^M$. If we define  $$D ~ = ~ \Set{p\in \Col_*(\omega,\On)^M}{\rank(\sigma)\in\ran{p}},$$ then $D$ is dense in $\Col_*(\omega,\On)^M$ and definable over $M$. Moreover, by the above observation, we have $\sigma^G=\sigma^p\in M$, whenever $G$ is an $M$-generic filter on $\Col_*(\omega,\On)^M$ and $p\in D\cap G$, because such $p$ either extends or is incompatible to any condition in $\tc(\sigma)$.

 (3) Let $\Col$ denote either $\Col(\omega,\On)^M$ or $\Col_*(\omega,\On)^M$. Assume that $A\in M$ is an antichain of $\Col$ 
 which is not equal to $\{\one\}$. Pick $a\in A$. Now for any $b\in A\setminus\{a\}$, the domains of $a$ and $b$ cannot be 
 disjoint by incompatibility. Define $c\in\Col$ with $\dom c=\dom a$ and for every $n\in\dom c$, 
 let $c(n)=\sup\{b(n)\mid b\in A\}+1$. Hence $c$ is incompatible with every element of $A$, showing that $A$ is not maximal. 

 (4) This statement follows from the above results because our assumptions imply that set-sized partial orders in $M$ contain non-trivial maximal antichains.  

 (5) Let for every $\alpha\in\On^M$, $\Col_\geq(\omega,\alpha)$ denote the subforcing of $\Col_\geq(\omega,\On^M)$ consisting
 of finite partial functions $\pmap{p}{\omega}{\alpha\cup\{\geq\beta\mid\beta\leq\alpha\}}{par}$ with the induced ordering.
 Clearly, $$\Col_\geq(\omega,\On)^M=\bigcup_{\alpha\in\On^M}\Col_\geq(\omega,\alpha).$$
 It remains to check that for every $\alpha\in\On^M$, $\Col_\geq(\omega,\alpha)$ is a complete subforcing of $\Col_\geq(\omega,\On)^M$. 
 Let $A$ be a maximal antichain of $\Col_\geq(\omega,\alpha)$ and let $p\in\Col_\geq(\omega,\On)^M$. Consider the condition 
 $\bar p\in\Col_\geq(\omega,\alpha)$ which is obtained from $p$ by replacing $p(n)$ by $\geq\alpha$ whenever $p(n)\geq\alpha$ or $p(n)$ is of the form $\geq\beta$ for some $\beta>\alpha$. 
 Since $A$ is a maximal antichain, there is $a\in A$ such that $a$ and $\bar p$ are compatible. Let $\bar q\in\Col_\geq(\omega,\alpha)$
 be a common strengthening of $\bar p$ and $a$. But then the condition $q$ obtained from $\bar q$ by replacing $\bar q(n)$ by $p(n)$ 
 for every $n\in\dom{p}$ such that $\bar q(n)$ is of the form $\geq\alpha$ witnesses that $p$ and $a$ are compatible.  
 \end{proof}

The above computations show that, contrasting the situation with set-sized partial orders, forcing with a dense suborder of a notion of class forcing $\PP$ can produce different generic extensions than forcing with $\PP$ does.\footnote{Note that it is still true that generic filters for $\PP$ induce generic filters for its dense suborders and vice versa.}
In a subsequent paper (\cite{class_forcing3}), we will in fact show that for any notion of class forcing $\PP$, the property that all forcing notions which contain $\PP$ as a dense subforcing produce the same generic extensions as $\PP$, is essentially equivalent to the pretameness of $\PP$. 

\begin{corollary}
 If $M$ is a countable transitive model of $\ZF^-$, then there are partial orders $\PP$ and $\QQ$ definable over 
$M$ such that $\QQ$ is a dense suborder of $\PP$ and $M=M[G\cap\QQ]\subsetneq M[G]$ whenever $G$ is a $\PP$-generic filter over $M$. \qed 
\end{corollary}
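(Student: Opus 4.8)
The plan is to instantiate the preceding lemma with $\PP=\Col(\omega,\On)^M$ and $\QQ=\Col_*(\omega,\On)^M$, so that the corollary reduces to assembling parts (1) and (2) of Lemma~\ref{lemma:properties col}. First, Definition~\ref{def:Col}(2) already declares $\QQ$ to be a dense suborder of $\PP$, and both are definable over $M$ by the remark following that definition. I would then fix a $\PP$-generic filter $G$ over $M$. Since $\QQ$ is dense in $\PP$, the filter $G\cap\QQ$ is $\QQ$-generic over $M$ (this is the transfer of genericity to dense suborders noted after the lemma), and hence Lemma~\ref{lemma:properties col}(2) yields $M[G\cap\QQ]=M$.

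It remains to verify the strict inequality $M\subsetneq M[G]$. Since $M\subseteq M[G]$ always holds via check-names, I only need to exhibit one set in $M[G]\setminus M$. The setup is provided by Lemma~\ref{lemma:properties col}(1): the union $F=\bigcup G$ is a (class) surjection from $\omega$ onto $\On^M$, because the sets $D_\alpha$ together with the sets $\{p\mid n\in\dom p\}$ from the proof of that part are dense and definable over $M$. I would then single out the concrete set $a=\{n<\omega\mid F(n)=0\}$, which lies in $M[G]$, and argue $a\notin M$ by a routine density computation: given any $s\in M$ with $s\subseteq\omega$ and any condition $p$, one extends $p$ on a fresh coordinate $n\notin\dom p$ by setting $F(n)=0$ if $n\notin s$ and $F(n)=1$ if $n\in s$, thereby forcing a disagreement between $a$ and $s$. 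As the resulting dense set is definable over $M$, the filter $G$ meets it, so $a\ne s$ for every $s\in M$, whence $a\notin M$ and $M\subsetneq M[G]$. Combining this with $M=M[G\cap\QQ]$ gives the corollary.

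The one genuine subtlety---and what I expect to be the main obstacle---is precisely this last step. If one tries to conclude $M[G]\ne M$ directly from Lemma~\ref{lemma:properties col}(1) by merely citing the existence of a surjection from $\omega$ onto some large ordinal, the argument is incomplete whenever every ordinal of $M$ is already countable inside $M$ (for instance when $M$ is of the form $H_{\omega_1}^N$ for a countable transitive model $N\models\ZFC$), since then $M$ itself already contains surjections from $\omega$ onto each of its ordinals. Over a general model of $\ZF^-$ one cannot assume that an uncountable ordinal exists, so the newness of the generic object must be extracted from genericity rather than from the collapsing of cardinals. The density argument of the previous paragraph supplies exactly this, produces a new subset of $\omega$, and is uniform across all countable transitive $M\models\ZF^-$; this is the care I would take to keep the proof correct in full $\ZF^-$ generality rather than only for models possessing uncountable ordinals.
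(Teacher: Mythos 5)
Your proof is correct, and it instantiates the same objects the paper intends: the corollary is stated with a \qed as an immediate consequence of Lemma \ref{lemma:properties col}, taking $\PP=\Col(\omega,\On)^M$ and $\QQ=\Col_*(\omega,\On)^M$, and your first paragraph is exactly that reduction (density of $\QQ$ in $\PP$, transfer of genericity, part (2) of the lemma for $M[G\cap\QQ]=M$). Where you genuinely add something is the strict inclusion $M\subsetneq M[G]$. The paper reads this off from part (1) of the lemma, i.e.\ from the existence in $M[G]$ of surjections from subsets of $\omega$ onto every ordinal of $M$; as you point out, that statement alone is conclusive only if $M$ has an ordinal that is uncountable \emph{inside} $M$, and there are countable transitive models of $\ZF^-$ with no such ordinal --- for instance $H_{\omega_1}^N$ for a countable transitive $N\models\ZFC$, a model in which every set is countable and which already contains surjections from $\omega$ onto each of its ordinals. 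For such $M$ one really does need a genericity argument producing a concrete new set, and your diagonalization --- forming $a=\Set{n<\omega}{F(n)=0}\in M[G]$ and, for each $s\in M$ with $s\subseteq\omega$, meeting the dense definable set of conditions that force a disagreement between $a$ and $s$ on a fresh coordinate --- supplies precisely that, uniformly in $M$. So the two arguments share the same decomposition and the same key lemma, but yours is the one that is airtight for arbitrary countable transitive models of $\ZF^-$, whereas the paper's shortcut implicitly relies on a computation (new subsets of $\omega$ via genericity) that is present in spirit in the proof of part (1) but is not part of its literal statement.
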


In Section \ref{section:Approach} we will show that all of the partial orders that we have mentioned so far satisfy the forcing theorem.

The following notion of class forcing due to Sy Friedman is mentioned in \cite[Remark 1.8]{MR1976584}. It will be crucial for the proofs of the negative results listed in Section \ref{section:Intro}.

\begin{definition}\label{def:friedman forcing}
Let $M$ be a countable transitive model of $\ZF^-$. Define $\FF^M$ to be the partial order whose conditions are triples $p=\langle d_p, e_p,f_p\rangle$ satisfying 
\begin{enumerate}
 \item $d_p$ is a finite subset of $\omega$, 

 \item $e_p$ is a binary acyclic relation on $d_p$, 

 \item $f_p$ is an injective function with $\dom{f_p}\in\{\emptyset,d_p\}$ and $\ran{f_p}\subseteq M$, 
 \item if $\dom{f_p}=d_p$ and $i,j\in d_p$, then we have $i ~ e_p ~ j$ if and only if $f_p(i)\in f_p(j)$, 
\end{enumerate}
and whose ordering is given by $$p\leq_{\FF^M}q ~ \Longleftrightarrow ~ d_q\subseteq d_p ~ \wedge ~ e_p\cap(d_q\times d_q)=e_q ~ \wedge ~ f_q\subseteq f_p.$$ 
\end{definition}

Note that $\FF^M$ is definable over $M$.

\begin{lemma}\label{lemma:density dom}
 The set of all conditions $p$ in $\FF^M$ with $\dom{f_p}=d_p$ is dense.  
\end{lemma}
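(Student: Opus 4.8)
The plan is to prove density of the set $D = \{p \in \FF^M \mid \dom{f_p} = d_p\}$ directly: given an arbitrary condition $p = \langle d_p, e_p, f_p\rangle$, I will construct an extension $q \leq_{\FF^M} p$ that lies in $D$. There are two cases to separate at the outset. If $\dom{f_p} = d_p$ already, then $p \in D$ and we are done trivially. So the real content is the case $\dom{f_p} = \emptyset$, where I must build a genuine injective function $f_q$ with domain $d_p$ whose membership pattern on $M$ exactly realizes the acyclic relation $e_p$.

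The key step is the following: given the finite set $d_p \subseteq \omega$ together with the finite binary acyclic relation $e_p$ on $d_p$, I need to find an injective assignment $\Map{f_q}{}{d_p}{M}{i}{f_q(i)}$ such that for all $i,j \in d_p$, we have $i \mathbin{e_p} j$ if and only if $f_q(i) \in f_q(j)$ (condition (4) of Definition \ref{def:friedman forcing}). This is exactly a faithful realization of the finite acyclic digraph $\langle d_p, e_p\rangle$ as a membership structure inside $M$. Since $e_p$ is acyclic, I can topologically order $d_p$ as $i_0, i_1, \dots, i_{k-1}$ so that $i_a \mathbin{e_p} i_b$ implies $a < b$, and then assign sets by recursion along this ordering: having chosen $f_q(i_0), \dots, f_q(i_{a-1})$, I choose $f_q(i_a)$ to be a set coding exactly the set $\{f_q(i_b) \mid i_b \mathbin{e_p} i_a\}$ of its $e_p$-predecessors, arranged so as to guarantee injectivity and so that no \emph{spurious} membership relations are created among the already-assigned sets. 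Concretely, one can take $f_q(i_a)$ to be a set that contains precisely the intended predecessors and nothing that would accidentally coincide with, or be a member of, any previously assigned value; the recursion works inside $M$ because $d_p$ is finite and $M \models \ZF^-$ has enough sets (in fact one can realize any finite acyclic relation by hereditarily finite sets, which are available in $M$). Having produced such an $f_q$, I set $q = \langle d_p, e_p, f_q\rangle$ and verify the four clauses of Definition \ref{def:friedman forcing}, noting that $q \leq_{\FF^M} p$ holds since $d_p = d_p$, $e_p \cap (d_p \times d_p) = e_p$, and $f_p = \emptyset \subseteq f_q$.

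The main obstacle I anticipate is ensuring \emph{faithfulness} of the realization: when I assign sets to the vertices $i_0, \dots, i_{k-1}$, I must guarantee that the only membership relations holding among the assigned values are exactly those prescribed by $e_p$, with no extra memberships and no accidental equalities violating injectivity. This requires care at each stage of the recursion to pad each chosen value with enough ``junk'' or to use sufficiently large rank markers so that $f_q(i_a) \in f_q(i_b)$ can only hold when intended. A clean way to control this is to build the sets by increasing rank following the topological order, so that $f_q(i_a) \in f_q(i_b)$ forces the rank of $f_q(i_a)$ to be strictly below that of $f_q(i_b)$, which is compatible only with $i_a$ preceding $i_b$ in the ordering; one then records exactly the desired predecessors as elements and distinguishes the values by an additional injective tag to secure injectivity. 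Once this bookkeeping is set up correctly, the verification that $q$ is a legitimate condition below $p$ lying in $D$ is routine.
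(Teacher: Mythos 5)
Your proposal is correct and takes essentially the same route as the paper's proof: both reduce to the nontrivial case $\dom{f_p}=\emptyset$ and build the function by recursion along the acyclic relation $e_p$, assigning to each $j\in d_p$ the set of values of its $e_p$-predecessors together with an injective ``tag'' that blocks accidental equalities and spurious memberships. The paper merely instantiates your abstract tag concretely, setting $f(j)=\{f(i)\mid i\mathrel{e_p}j\}\cup\{\{\emptyset,j\}\}$, so that every tag contains $\emptyset$ while no value does; this one observation delivers injectivity and faithfulness simultaneously and is the only bookkeeping detail your sketch leaves unspecified.
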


\begin{proof}
Pick $p\in\FF^M$ with $\dom{f_p}=\emptyset\neq d_p$. We inductively define (using that $e_p$ is acyclic) a function $f$ as follows. 
 For every $j\in d_p$ let
$$f(j)=\{f(i)\mid i \;e_p\; j\}\cup\{\{\emptyset,j\}\}.$$
Using that $\emptyset\not\in\range(f)$, it is easy to inductively verify that $\bar p=\langle d_p,e_p,f\rangle$ satisfies conditions (3) and (4) above, and hence is an extension of $p$ in $\FF^M$ with $\dom{f_{\bar p}}=d_{\bar{p}}$. 
\end{proof}

%


\begin{lemma}\label{lemma:friedman forcing}
 If $M$ is a countable transitive model of $\ZF^-$ and $G$ is an $\FF^M$-generic filter over $M$, then there is a binary relation $E$ on $\omega$ such that $E\in M[G]$ and the models $\langle\omega,E\rangle$ and $\langle M,\in\rangle$ are isomorphic in $\V$.  
\end{lemma}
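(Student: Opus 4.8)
The plan is to read the generic relation and the generic isomorphism directly off the filter. Define
$$E ~ = ~ \bigcup_{p\in G} e_p ~ \subseteq ~ \omega\times\omega, \qquad F ~ = ~ \bigcup\Set{f_p}{p\in G}.$$
Since the ordering of $\FF^M$ requires $f_q\subseteq f_p$ whenever $p\leq_{\FF^M} q$, and since $G$ is a filter and hence directed, any two conditions in $G$ have a common extension in $G$; it follows that $F$ is a well-defined injective partial function from $\omega$ to $M$. The goal is then to show that $F$ is in fact a bijection between $\omega$ and $M$ witnessing $\langle\omega,E\rangle\cong\langle M,\in\rangle$ in $\V$, and, separately, that $E\in M[G]$.

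For the isomorphism I would run three density arguments, all definable over $M$ and hence met by $G$. Using Lemma \ref{lemma:density dom} I may always pass below a condition $q$ with $\dom{f_q}=d_q$; for such $q$ the relation $e_q$ is automatically acyclic, since by clause (4) it is the pullback of $\in$ along the injection $f_q$ into $M$ and $\in$ has no finite cycles in $M$. First, for each $n\in\omega$ the set of conditions $p$ with $\dom{f_p}=d_p$ and $n\in d_p$ is dense: given $q$ as above with $n\notin d_q$, adjoin $n$ to $d_q$, choose a fresh value $x\in M\setminus\ran{f_q}$ (possible as $\ran{f_q}$ is finite), put $f_p=f_q\cup\{\langle n,x\rangle\}$, and define the new $e_p$-edges at $n$ to mirror $\in$ between $x$ and the existing values of $f_q$; this preserves (4) and acyclicity and yields $p\leq_{\FF^M} q$. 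This forces $\dom F=\omega$. Symmetrically, for each $x\in M$ the set of conditions with $\dom{f_p}=d_p$ and $x\in\ran{f_p}$ is dense (adjoin instead a fresh $n\in\omega\setminus d_q$ with $f_p(n)=x$), so $\ran F=M$. Finally, for fixed $i,j\in\omega$, pick $p\in G$ with $i,j\in d_p$ and $\dom{f_p}=d_p$; since any two conditions in $G$ have a common extension in $G$ and extensions never alter the $e$-edges among points already present, $i\mathbin{E}j$ holds iff $\langle i,j\rangle\in e_p$, which by (4) holds iff $f_p(i)\in f_p(j)$, i.e.\ iff $F(i)\in F(j)$. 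Thus $F$ is the desired isomorphism in $\V$.

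The step I expect to be the main obstacle is verifying $E\in M[G]$. Because $\FF^M$ is a proper class (the range of $f_p$ may be any element of $M$), the naive name $\Set{\langle\op(\check i,\check j),p\rangle}{\langle i,j\rangle\in e_p}$ is a proper class and hence not an element of $M^{\FF^M}$. The key observation is that $E$ is already decided by the $f$-free part of the conditions, and that the shapes $\langle d,e\rangle$ with $d\in[\omega]^{<\omega}$ and $e$ acyclic on $d$ form a set in $M$. I would therefore set
$$\dot E ~ = ~ \Set{\langle\op(\check i,\check j),\langle d,e,\emptyset\rangle\rangle}{d\in[\omega]^{<\omega},~ e\text{ acyclic on }d,~ \langle i,j\rangle\in e},$$
which is a genuine element of $M^{\FF^M}$, as each triple $\langle d,e,\emptyset\rangle$ is a legitimate condition. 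The point is that if some $p\in G$ has underlying shape $\langle d_p,e_p\rangle$, then $p\leq_{\FF^M}\langle d_p,e_p,\emptyset\rangle$, so $\langle d_p,e_p,\emptyset\rangle\in G$ by upward closure of the filter. Combining this with the edge-coherence built into the ordering, one checks $\dot E^G=E$, which gives $E\in M[G]$ and completes the proof.
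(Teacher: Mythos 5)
Your proposal is correct and takes essentially the same route as the paper's proof: the paper likewise works with $E=\bigcup_{p\in G}e_p$ and $F=\bigcup_{p\in G}f_p$, runs the same density arguments for totality, surjectivity and the isomorphism property via clause (4), and obtains $E\in M[G]$ from a set-sized name supported on conditions with empty $f$-part. The only cosmetic difference is that the paper's name $\dot E$ uses just the minimal conditions $p_{i,j}=\langle\{i,j\},\{\langle i,j\rangle\},\emptyset\rangle$ rather than all $f$-free conditions, which is immaterial since both names evaluate to $\bigcup_{p\in G}e_p$.
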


\begin{proof} 
  Define an $\FF^M$-name $\dot E\in M$ by setting $$\dot E\,=\,\Set{\langle \mathsf{op}(\check{i},\check{j}),p_{i,j}\rangle}{ i,j\in\omega, ~ i\neq j},$$
  where $p_{i,j}$ denotes the condition in $\FF^M$ with $d_{p_{i,j}}=\{i,j\}$, $e_{p_{i,j}}=\{\langle i,j\rangle\}$ and $f_{p_{i,j}}=\emptyset$. 
  Let $G$ be an $\FF^M$-generic filter over $M$ and put $E=\dot E^G\in M[G]$. Note that $E=\bigcup\Set{e_p}{p\in G}$. Define $F=\bigcup\Set{f_p}{p\in G}$. By Lemma \ref{lemma:density dom}, the sets $D_n=\Set{p\in\FF^M}{n\in\dom{f_p}}$ are dense in $\FF^M$. Since these sets are definable over $M$, we can conclude that $F$ is injective and that $\dom{F}=\omega$.
 In order to see that $F$ is surjective, we claim that for every $x\in M$, the set $\{p\in\FF^M\mid x\in\ran{f_p}\}$
 is dense. In order to show this, let $p=\langle d_p,e_p,f_p\rangle\in\FF^M$ such that $x\notin\ran{f_p}$. Using Lemma \ref{lemma:density dom}, we may assume
 that $\dom{f_p}=d_p$. Choose $j\in\omega\setminus d_p$ and define $d_q=d_p\cup\{j\}, 
 e_q=e_p\cup\{\langle i,j\rangle\mid f_p(i)\in x\}\cup\{\langle j,i\rangle\mid x\in f_p(i)\}$ and $f_q=f_p\cup\{\langle j,x\rangle\}$.
 Then $q=\langle d_q,e_q,f_q\rangle$ is an extension of $p$ with $x\in\ran{f_q}$.
 
It remains to check that $F$ is an isomorphism between the models $\langle\omega,E\rangle$ and $\langle M,\in\rangle$. 
Take $i,j<\omega$ such that $i\,E\,j$, i.e. $p_{i,j}\in G$. By the above computations, there is a condition $p\in G$ with $i,j\in\dom{f_p}$. 
We then have $i\,e_p\,j$ and by (4) in Definition \ref{def:friedman forcing} we have $F(i)=f_p(i)\in f_p(j)=F(j)$. 
For the converse, suppose that $x,y\in M$ such that $x\in y$. By the above computations, there is a condition $p\in G$ and $i,j\in d_p$ with $F(i)=f_p(i)=x\in y=f_p(j)=F(j)$. By (4) in Definition \ref{def:friedman forcing}, this implies $i\,e_p\,j$ and therefore $i\,E\,j$ holds.  
\end{proof}



\section{The general setting}\label{section:general setting}

In the following, we outline the general setting of this paper. That is, we will actually make use 
of an approach that is slightly more general than the one presented in Section \ref{section:Intro}, 
namely one that works with models that might contain more second-order objects than just the definable ones, 
and moreover we will work with preorders instead of partial orders. 


\begin{notation}
 \begin{enumerate}
  \item We denote by $\GB^-$ the theory in the two-sorted language with variables for sets and classes, 
with the set axioms given by $\ZF^-$ with class parameters allowed in the schemata of Separation and Collection, and the class axioms of extensionality, foundation and first-order class comprehension (i.e.\ involving only set quantifiers).
Furthermore, we denote the theory $\GB^-$ enhanced with the power set axiom by $\GB$ (this is the common collection of axioms of \emph{G\"odel-Bernays set theory}).

  \item We let $\KM$ denote the axiom system of \emph{Kelley-Morse class theory}. That is, in addition to the usual $\ZFC$ axioms for sets with class parameters allowed in the schemata of Separation and Collection, one also has the class axioms of Foundation, Extensionality, Replacement, (second order) Comprehension and Global Choice. In particular, class recursion holds in models of $\KM$. For a detailed axiomatization of $\KM$, see \cite{antos2015class}.

  \item By a countable transitive model of $\GB^-$ (or $\GB$, $\KM$), we mean a model $\MM=\langle M,\C\rangle$ of $\GB^-$ (resp.\ $\GB$, $\KM$)
such that $M$ is transitive and both $M$ and $\C$ are countable in $\V$. 
 \end{enumerate}
\end{notation}


 \begin{example}
  \begin{enumerate}
   \item Let $M$ be a countable transitive model of $\ZF^-$ 
     and let $\mathrm{Def}(M)$ be the set of all subsets of $M$ that are definable over $\langle M,\in\rangle$. Then $\langle M,\mathrm{Def}(M)\rangle$ is a model of $\GB^-$. 
    \item Let $M$ be a countable transitive model of $\ZF$ satisfying $\V=\mathsf L[A]$ and Replacement for formulae
mentioning the predicate $A$, and let $\C$ be $\mathrm{Def}(M,A)$. Then $\langle M,\C\rangle$ is a model of $\GB^-$. This is the approach used in \cite{MR1780138}.
   \item Every countable transitive model of $\KM$ is a model of $\GB$.
  \end{enumerate}
 \end{example}
 
Fix a countable transitive model $\MM=\langle M,\C\rangle$ of $\GB^-$. By a \emph{notion of class forcing} (for $\MM$) we mean a preorder $\PP=\langle P,\leq_\PP\rangle$ such that $P,\leq_\PP\,\in\C$. 
We will frequently identify $\PP$ with its domain $P$. In the following, we also fix a notion of class forcing $\PP=\langle P,\leq_\PP\rangle$ for $\MM$.  

We call $\sigma$ a \emph{$\PP$-name} if all elements of $\sigma$ are of the form $\langle\tau,p\rangle$, where 
$\tau$ is a $\PP$-name and $p\in\PP$. 
Define $M^\PP$ to be the set of all $\PP$-names that are elements of $ M$ and define $\C^\PP$ to be the set of all $\PP$-names that are elements of $\C$.
In the following, we will usually call the elements of $M^\PP$ \emph{$\PP$-names} and we will call the elements of $\C^\PP$ \emph{class $\PP$-names}.
If $\sigma\in M^\PP$ is a $\PP$-name, we define 
$$\rank\sigma=\sup\{\rank\tau+1\mid\exists p\in\PP\,[\langle\tau,p\rangle\in\sigma]\}$$ 
to be its \emph{name rank}. 

We say that a filter $G$ on $\PP$ is \emph{$\PP$-generic over $\MM$} if $G$ meets every dense subset of $\PP$ that is an element of $\C$. 
Given such a filter $G$ and a $\PP$-name $\sigma$, we define the \emph{$G$-evaluation} of $\sigma$ as
$$\sigma^G=\{\tau^G\mid\exists p\in G\,[\langle\tau,p\rangle\in\sigma]\},$$
and similarly we define $\Gamma^G$ for $\Gamma\in\C^\PP$. Moreover, if $G$ is $\PP$-generic over $\MM$, then we set
$ M[G]=\Set{\sigma^G}{\sigma\in M^\PP}$ and $\C[G]=\Set{\Gamma^G}{\Gamma\in\C^\PP}$, and call $\MM[G]=\langle M[G],\C[G]\rangle$
a \emph{$\PP$-generic extension} of $\MM$.\footnote{While it does not really play any role for the present paper which second order objects we allow for in our generic extensions, we will argue in a subsequent paper (\cite{class_forcing2}) that the above choice (namely $\C[G]$) is canonical.}


For all $n<\omega$, we let $\L^n$ denote the first-order language that extends the language of set theory $\L_\in$ by unary predicate symbols $A_0,\ldots,A_{n-1}$. Given an $\L^n$-formula $\varphi(v_0,\ldots,v_{m-1})$, a tuple $\vec{\Gamma}=\langle\Gamma_0,\ldots,\Gamma_{n-1}\rangle\in(\C^\PP)^n$, a condition $p\in P$ and names $\sigma_0,\ldots,\sigma_{m-1}\in M^\PP$, we write 
\begin{equation*}
 p\Vdash^{\MM,\vec{\Gamma}}_\PP\varphi(\sigma_0,\ldots,\sigma_{m-1}) 
\end{equation*}
to denote that $\varphi(\sigma_0^G,\ldots,\sigma_{m-1}^G)$ holds in the structure $ M_{\vec{\Gamma}}[G]=\langle M[G], \in, \Gamma_0^G,\ldots,\Gamma_{n-1}^G\rangle$ 
whenever $G$ is a $\PP$-generic filter over $\MM$ with $p\in G$. 
Whenever the context is clear, we will omit the superscripts and subscripts.  

Our choice of considering preorders instead of partial orders is due to the reason that in the case of a two-step iteration $\PP*\dot\QQ$ of notions of class forcing, as defined in \cite{MR1780138} (see also Section \ref{sec:ftl} of the present paper), we will have conditions of the form $\langle p,\dot{q}\rangle$ for $p\in\PP$ and $p$ forcing that $\dot{q}\in\dot\QQ$. In general there will be distinct pairs $\langle p,\dot{q_0}\rangle$ and $\langle p,\dot{q_1}\rangle$ such that $p\Vdash_\PP\dot{q_0}=\dot{q_1}$, i.e.\ one naturally obtains a preorder that is not antisymmetric.
However, in some contexts it will become crucial for our orderings to be antisymmetric. 
In that case we will use the following additional property:

\begin{definition}
We say that a model $\langle M,\C\rangle$ of $\GB^-$ satisfies \emph{representatives choice}, if for every equivalence relation 
$E\in\C$ there is $A\in\C$ and a surjective map $\pi:\dom E\ra A$ in $\C$ such that 
$\langle x,y\rangle\in E$ if and only if $\pi(x)=\pi(y)$. 
\end{definition}

Using representatives choice, given a preorder $\PP=\langle P,\leq_\PP\rangle\in\C$, by considering the equivalence relation
$$p\approx q\quad\text{iff}\quad p\leq_\PP q\wedge q\leq_\PP p,$$ we obtain a partial order $\QQ\in C$ and 
a surjective map $\pi:\PP\ra\QQ$ in $\C$ such that for all $p,q\in\PP$, $p\approx q$ if and only if $\pi(p)=\pi(q)$. 

Clearly, representatives choice follows from the existence of a global well-order. Furthermore, if $M$ satisfies the power
set axiom, then we also obtain representatives choice, since we can use Scott's trick to obtain the 
sets $[p]=\{q\in\PP\mid q\approx p\wedge\forall r\,[q\approx r\ra\rank(q)\leq\rank(r)]\}\in M$ for $p\in\PP$.


\section{The forcing theorem}

In this section, we fix a countable transitive model $\MM=\langle M,\C\rangle$ of $\GB^-$ and a notion of class forcing $\PP=\langle P,\leq_\PP\rangle$ for $\MM$.
We will show that in order to obtain the forcing theorem for all $\L^n$-formulae, it suffices that the forcing relation 
for either the formula $\anf{v_0\in v_1}$ or the formula $\anf{v_0=v_1}$ is definable, thus proving Theorem \ref{theorem:ForcingTheorem}. 

\begin{definition}\label{def:ft}
 Let $\varphi\equiv\varphi(v_0,\ldots,v_{m-1})$ be an $\L^n$-formula. 

 \begin{enumerate}


  \item We say that \emph{$\PP$ satisfies the definability lemma for $\varphi$ over $\MM$} if 
   \begin{equation*}
  \Set{\langle p,\sigma_0,\ldots,\sigma_{m-1}\rangle\in P\times M^\PP\times\ldots\times M^\PP}{p\Vdash^{\MM,\vec{\Gamma}}_\PP\varphi(\sigma_0,\ldots,\sigma_{m-1})}\in\C 
 \end{equation*} 
 for all $\vec{\Gamma}\in(\C^\PP)^n$. 
 
 \item We say that \emph{$\PP$ satisfies the truth lemma for $\varphi$ over $\MM$} if for all $\sigma_0,\ldots,\sigma_{m-1}\in M^\PP$, $\vec{\Gamma}\in(\C^\PP)^n$ and every filter $G$ which is $\PP$-generic over $\MM$ with 
  \begin{equation*}
    M_{\vec{\Gamma}}[G]\models\varphi(\sigma_0^G,\ldots,\sigma_{m-1}^G), 
  \end{equation*}
  there is $p\in G$ with $p\Vdash^{\MM,\vec{\Gamma}}_\PP\varphi(\sigma_0,\ldots,\sigma_{m-1})$.

  \item We say that \emph{$\PP$ satisfies the forcing theorem for $\varphi$ over $\MM$} if $\PP$ satisfies both the definability lemma and the truth lemma for $\varphi$ over $\MM$.
 \end{enumerate}
 \end{definition}


Our goal is to prove a generalization of Theorem \ref{theorem:ForcingTheorem} in our general setting
which allows for second-order objects. The first step to achieve this is to show that the definability lemma
for some atomic formula already implies the truth lemma to hold for all atomic formulae. 

\begin{lemma}\label{lem:def subs ft}
 Assume that $\PP$ satisfies the definability lemma for $\anf{v_0\in v_1}$ or $\anf{v_0=v_1}$ over $\MM$. 
 Then $\PP$ satisfies the forcing theorem for all atomic formulae. 
%
\end{lemma}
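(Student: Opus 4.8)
\emph{Approach.} The plan is to reproduce the simultaneous recursion of the set-forcing proof while working throughout with the semantic relation $\Vdash$, using first-order class comprehension to keep the atomic forcing relations and the auxiliary dense sets inside $\C$. For $\sigma,\tau\in M^\PP$ and $p\in\PP$ I would first record two characterizations: \textbf{(I)} $p\Vdash\sigma\in\tau$ holds iff $\{q\leq_\PP p\mid\exists\langle\pi,r\rangle\in\tau\,(q\leq_\PP r\wedge q\Vdash\sigma=\pi)\}$ is dense below $p$; and \textbf{(II)} $p\Vdash\sigma=\tau$ holds iff for every $\langle\pi,r\rangle\in\sigma$ the set $\{q\leq_\PP p\mid q\perp r\vee q\Vdash\pi\in\tau\}$ is dense below $p$, and symmetrically with $\sigma$ and $\tau$ interchanged. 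In each characterization the implication ``density $\Rightarrow$ forcing'' is the routine genericity argument (replacing a set $D$ dense below $p$ by $D\cup\{q\mid q\perp p\}\in\C$, so that a generic containing $p$ must meet $D$), but it requires the displayed set to lie in $\C$, i.e.\ it uses the definability of the relation occurring on its right-hand side; the converse implication ``forcing $\Rightarrow$ density'' is unconditional for \textbf{(II)} but for \textbf{(I)} needs the truth lemma for $=$.

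Since the right-hand side of \textbf{(II)} set-quantifies over the components of $\sigma$ and $\tau$ with the class $\Vdash_{\in}$ as a parameter, the $\in$-hypothesis yields at once (by first-order class comprehension) that $\Vdash_{=}$ is a class as well. The crux is then the assertion that \emph{the truth lemma for $=$ follows from definability of $\Vdash_{=}$ alone}. I would prove this by induction on $\rho=\max(\rank\sigma,\rank\tau)$, using the class $D=\{q\mid q\Vdash\sigma=\tau\}\cup A'\cup B'$, where $A'$ consists of those $q$ for which some $\langle\pi,r\rangle\in\sigma$ has $q\leq_\PP r$ while no $t\leq_\PP q$ satisfies $t\leq_\PP r'\wedge t\Vdash\pi=\pi'$ for any $\langle\pi',r'\rangle\in\tau$, and $B'$ is symmetric. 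Being defined purely from $\Vdash_{=}$, these classes lie in $\C$. For density I would use the purely-equality recursion \textbf{(III)}: $p\Vdash\sigma=\tau$ iff for every $\langle\pi,r\rangle\in\sigma$ the set $\{q\leq_\PP p\mid q\perp r\vee\exists\langle\pi',r'\rangle\in\tau\,(q\leq_\PP r'\wedge q\Vdash\pi=\pi')\}$ is dense below $p$, and symmetrically. If no $q\leq_\PP p_0$ forced $\sigma=\tau$, then the contrapositive of the ``density $\Rightarrow$ forcing'' half of \textbf{(III)} (which uses only $\Vdash_{=}\in\C$) would produce a failure of density, a witness of which, refined below the relevant $r$, lands in $A'$ or $B'$. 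That a generic $G$ with $\sigma^G=\tau^G$ meets neither $A'$ nor $B'$ is exactly the induction hypothesis: a $q\in G\cap A'$ gives $\langle\pi,r\rangle\in\sigma$ with $r\in G$, so $\pi^G\in\sigma^G=\tau^G$ provides $\langle\pi',r'\rangle\in\tau$ with $r'\in G$ and $\pi'^G=\pi^G$, whereupon the lower-rank truth lemma for $\pi,\pi'$ delivers $t\in G$ with $t\leq_\PP r'$ and $t\Vdash\pi=\pi'$, contradicting membership in $A'$. Hence $G$ meets $\{q\mid q\Vdash\sigma=\tau\}$, as required.

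Both hypotheses now reach the same state. Under the $=$-hypothesis the truth lemma for $=$ just proved supplies the missing ``forcing $\Rightarrow$ density'' direction of \textbf{(I)}, so \textbf{(I)} defines $\Vdash_{\in}$ from $\Vdash_{=}$ and $\Vdash_{\in}$ is a class; under the $\in$-hypothesis $\Vdash_{\in}$ is a class by assumption and $\Vdash_{=}$ is a class by \textbf{(II)}. In either case both atomic relations are classes and the truth lemma for $=$ holds. The truth lemma for $\in$ is then immediate: given $\sigma^G\in\tau^G$, choose $\langle\pi,r\rangle\in\tau$ with $r\in G$ and $\pi^G=\sigma^G$, apply the truth lemma for $=$ to obtain $p_0\in G$ with $p_0\Vdash\sigma=\pi$, and note that any $p\in G$ below both $p_0$ and $r$ forces $\sigma\in\tau$. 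The predicate-atomic case $A_i(\sigma)$ is handled identically, with the class name $\Gamma_i$ in place of $\tau$: the analogue of \textbf{(I)} exhibits $\Vdash_{A_i}$ as a class by first-order comprehension with $\Gamma_i$ as a class parameter (the quantifier $\exists\langle\tau,r\rangle\in\Gamma_i$ being a set quantifier bounded by $\Gamma_i$), and the truth lemma transfers verbatim from the truth lemma for $=$ applied to $\sigma$ and the components of $\Gamma_i$.

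The main obstacle is the same-rank entanglement of definability and truth: deriving definability of $\Vdash_{\in}$ from $\Vdash_{=}$ through \textbf{(I)} requires the truth lemma for $=$, yet the natural proof of that truth lemma appears to require the characterization \textbf{(II)}, which in turn consumes definability of $\Vdash_{\in}$. The device that severs this loop is the purely-equality recursion \textbf{(III)}, all of whose forcing references are to equality of \emph{strictly lower-rank} names; this is what allows the truth lemma for $=$ to be established from definability of $\Vdash_{=}$ by itself, after which the remaining implications are bookkeeping.
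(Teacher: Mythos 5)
Your proof is correct, and it sits in the same family as the paper's argument: both mimic the set-forcing recursion, both interdefine the two atomic forcing relations via density characterizations, and both extract truth lemmas from dense classes of ``negative deciders'' (your $A'$ and $B'$ are close analogues of the classes $D$ and $E$ appearing in the paper's items (3) and (4)). The organization, however, is genuinely different. The paper anchors everything on $\Vdash_\in$: it introduces an auxiliary relation $p\Vdash^{*}\sigma\subseteq\tau$, asserting that for each $\langle\rho,r\rangle\in\sigma$ the class $\{s\mid s\Vdash\rho\in\tau\}$ is dense below every common extension of $p$ and $r$, and then runs a simultaneous induction on the lexicographic order of $\langle\rank(\sigma)+\rank(\tau),\rank(\sigma)\rangle$ proving four statements at once: a density characterization of $\Vdash_\in$, the identity of $\Vdash_\subseteq$ with $\Vdash^{*}$, and the density of the classes of conditions deciding $\sigma\in\tau$ resp.\ $\sigma\subseteq\tau$ (these last two yield the truth lemmas). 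The hypothesis that only $\Vdash_=$ is definable is then dispatched in a single closing sentence, by stipulating the density characterization of $\Vdash_\in$ in terms of $\Vdash_=$ --- and, as you correctly observe, justifying the ``forcing implies density'' half of that characterization requires the truth lemma for equality, which at that point the paper leaves implicit (strictly, one must rerun the induction with the roles of $\in$ and $=$ exchanged). Your factoring through the single lemma ``$\Vdash_=\,\in\C$ alone implies the truth lemma for $\anf{v_0=v_1}$'', proved by induction on $\max(\rank\sigma,\rank\tau)$ via the purely-equality recursion \textbf{(III)}, treats the two hypotheses uniformly and makes that missing step explicit; what it buys is precisely the severing of the definability/truth circularity you describe, and what it costs is only that the $\subseteq$-relation and its characterization, which the paper obtains as byproducts, are absent --- harmless here, since $\subseteq$ is not atomic.
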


\begin{proof}
Suppose first that the definability lemma holds for $\anf{v_0\in v_1}$. We denote 
by $p\Vdash_\PP^{\MM,*}\sigma\subseteq\tau$ the statement that for all $\langle\rho,r\rangle\in\sigma$ and 
for all $q\leq_\PP p,r$, the set 
$$D_{\rho,\tau}=\{s\in\PP\mid s\Vdash_\PP^\MM\rho\in\tau\}$$
is dense below $q$ in $\PP$. 
Furthermore, let $p\Vdash_\PP^{\MM,*}\sigma=\tau$ denote that $p\Vdash_\PP^{\MM,*}\sigma\subseteq\tau$ and $p\Vdash_\PP^{\MM,*}\tau\subseteq\sigma$. 
We show by induction on the lexicographic order on pairs $\langle\rank(\sigma)+\rank(\tau),\rank(\sigma)\rangle$ that the following hold for each $p\in\PP$:
\begin{enumerate-(1)}
 \item $p\Vdash_\PP^\MM\sigma\in\tau$ if and only if the set 
 $$E_{\sigma,\tau}=\{q\in\PP\mid\exists\langle\rho,r\rangle\in\tau\,[q\leq_\PP r\wedge q\Vdash_\PP^{\MM,*}\sigma=\rho]\}\in\C$$
 is dense below $p$ in $\PP$.
 \item $p\Vdash_\PP^\MM\sigma\subseteq\tau$ if and only if $p\Vdash_\PP^{\MM,*}\sigma\subseteq\tau$. 
 In particular, $p\Vdash_\PP^\MM\sigma=\tau$ if and only if $p\Vdash_\PP^{\MM,*}\sigma=\tau$. 
 \item There is a dense subset of $\PP$ in $\C$ that consists of conditions $p$ in $\PP$ such that either 
 $p\Vdash_\PP^\MM\sigma\in\tau$ or $p\Vdash_\PP^\MM\sigma\notin\tau$.
 \item There is a dense subset of $\PP$ in $\C$ that consists of conditions $p$ in $\PP$ such that either 
 $p\Vdash_\PP^\MM\sigma\subseteq\tau$ or $p\Vdash_\PP^\MM\sigma\nsubseteq\tau$. 
\end{enumerate-(1)}
To start the induction, note that if $\rank(\sigma)+\rank(\tau)=0$, then (1)--(4) trivally hold.
Note that (3) implies that the truth lemma holds for $\anf{v_0\in v_1}$. Furthermore, (4) implies
the truth lemma for $\anf{v_0\subseteq v_1}$ and hence also for equality.
Suppose now that (1)--(4) are satisfied for all pairs of names $\langle\bar\sigma,\bar\tau\rangle$ in $M^\PP$ for which $\langle\rank(\bar\sigma)+\rank(\bar\tau),\rank(\bar\sigma)\rangle$ is lexicographically less than $\langle\rank(\sigma)+\rank(\tau),\rank(\sigma)\rangle$, that is $\rank(\bar\sigma)+\rank(\bar\tau)\le\rank(\sigma)+\rank(\tau)$ and in case of equality, we have that $\rank(\bar\sigma)<\rank(\sigma)$. 

In order to prove (1), pick a condition $p\in\PP$ with $p\Vdash_\PP^\MM\sigma\in\tau$ and $q\leq_\PP p$. Let $G$ be $\PP$-generic 
over $\MM$ with $q\in G$. Then $\sigma^G\in\tau^G$ by assumption and hence there is $\langle\rho,r\rangle\in\tau$ 
with $r\in G$ and $\sigma^G=\rho^G$. By our inductive assumption, property (4) yields a condition 
$s\in G$ with $s\Vdash_\PP^\MM\sigma=\rho$ which by (2) is equivalent to $s\Vdash_\PP^{\MM,*}\sigma=\rho$.
Since $G$ is a filter, there is $t\in G$ with $t\leq_\PP q,r,s$. In particular, $t\in E_{\sigma,\tau}$. 
For the other direction, suppose that $E_{\sigma,\tau}$ is dense below $p$. Let $G$ be $\PP$-generic over $\MM$
with $p\in G$. By density of $E_{\sigma,\tau}$ we can take $q\in G$ and $\langle\rho,r\rangle\in\tau$ 
such that $q\leq_\PP r$ and $q\Vdash_\PP^{\MM,*}\sigma=\rho$. Then $r\in G$ and so $\rho^G\in\tau^G$. 
Thus by our inductive assumption, condition (2) implies that $\sigma^G=\rho^G\in\tau^G$.

For (2), suppose first that $p\Vdash_\PP^\MM\sigma\subseteq\tau$, let $\langle\rho,r\rangle\in\sigma$ and let $q\leq_\PP p,r$.
Take a $\PP$-generic filter $G$ with $q\in G$.
Then $\rho^G\in\sigma^G\subseteq\tau^G$. By our inductive assumption,
we can find $s\in G$ so that $s\Vdash_\PP^\MM\rho\in\tau$. Given any $q^*\leq_\PP q$, by strengthening $s$
if necessary, we can find such $s\leq_\PP q^*$, as desired.  
Conversely, assume that $p\Vdash_\PP^{\MM,*}\sigma\subseteq\tau$ and let $G$ be $\PP$-generic over $\MM$
with $p\in G$. Let $\langle\rho,r\rangle\in\sigma$ with $r\in G$. We have to show that $\rho^G\in\tau^G$. 
Let $q\in G$ be a common strengthening of $p$ and $r$. Then by assumption, the set 
$D_{\rho,\tau}$ is dense below $q$. By genericity, we can take $s\in D_{\rho,\tau}\cap G$.
Using our inductive assumption, this shows that $\rho^G\in\tau^G$, as desired. 

For (3), consider the set
$$D=\{p\in\PP\mid\forall\langle\rho,r\rangle\in\tau\,\forall q\leq_\PP p,r\ [q\nVdash_\PP^{\MM}\sigma=\rho]\}.$$
Then our inductive assumptions imply that $D\in\C$. Moreover, condition (1) states that $D$ is nonempty below every
$p\in\PP$ with $p\nVdash_\PP^\MM\sigma\in\tau$. 
Hence it suffices to show that $p\Vdash_\PP^\MM\sigma\notin\tau$ for every $p\in D$, since then 
$D\cup\{p\in\PP\mid p\Vdash_\PP^\MM\sigma\in\tau\}\in\C$ is a dense set of conditions deciding $\sigma\in\tau$.
So take $p\in D$ and suppose that $p\nVdash_\PP^\MM\sigma\notin\tau$. Then there is a $\PP$-generic filter $G$
containing $p$ such that $\sigma^G\in\tau^G$. Then there must be $\langle\rho,r\rangle\in\tau$ with $r\in G$
and $\sigma^G=\rho^G$. By our inductive assumption, we can find $q\in G$ with $q\Vdash_\PP^\MM\sigma=\rho$.
By possibly strengthening $q$ using that $G$ is a filter, we may assume that $q\leq_\PP p,r$. But this contradicts that $p\in D$. 

In order to verify (4), we define 
$$E=\{p\in\PP\mid\exists\langle\rho,r\rangle\in\sigma\ [p\leq_\PP r\wedge\forall q\leq_\PP p\ (q\nVdash_\PP^\MM\rho\in\tau)]\}.$$
As above, $E$ is in $\C$ inductively, and it is nonempty below every condition which does not force $\sigma\subseteq\tau$. 
As in the proof of (3) it remains to check that $p\Vdash_\PP^\MM\sigma\nsubseteq\tau$ for each $p\in E$. 
Assume, towards a contradiction, that there is $p\in E$ with $p\nVdash_\PP^\MM\sigma\nsubseteq\tau$. Then 
there is a $\PP$-generic filter with $p\in G$ and $\sigma^G\subseteq\tau^G$. Let $\langle\rho,r\rangle$ witness that 
$p\in E$. Then $r\in G$ and so $\rho^G\in\sigma^G\subseteq\tau^G$. Using (3) inductively, we obtain $q\in G$ with $q\Vdash_\PP^\MM\rho\in\tau$.
But then there is $s\leq_\PP p,q$, contradicting that $p\in E$. 

If the definability lemma holds for $\anf{v_0=v_1}$, we can define the $\PP$-forcing relation for 
$\anf{v_0\in v_1}$ by stipulating (as above) that $p\Vdash_\PP^\MM\sigma\in\tau$ if and only if the set 
\[\{q\in\PP\mid\exists\langle\rho,r\rangle\in\tau\,[q\leq_\PP r\wedge q\Vdash_\PP^\MM\sigma=\rho]\}\] is dense below $p$. 
\end{proof}

\begin{theorem}\label{thm:ft}
If $\PP$ satisfies the definability lemma either for $\anf{v_0\in v_1}$ or for $\anf{v_0=v_1}$ over $\MM$, then $\PP$ satisfies the forcing theorem for every $\L^n$-formula over $\MM$. 
\end{theorem}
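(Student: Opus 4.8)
The plan is to take the forcing theorem for atomic formulae---which is provided by Lemma \ref{lem:def subs ft}---as the base case and then induct on the complexity of $\L^n$-formulae, treating negation, disjunction and existential quantification (with $\wedge$, $\forall$ and $\to$ read as abbreviations). The one extra point hiding in the base case is that the atomic formulae of $\L^n$ include the predicate atoms $A_k(v_0)$, whose forcing relation asserts $\sigma^G\in\Gamma_k^G$; I would define $p\Vdash\sigma\in\Gamma_k$ to mean that $\{q\in\PP\mid\exists\langle\rho,r\rangle\in\Gamma_k\,[q\leq_\PP r\wedge q\Vdash\sigma=\rho]\}$ is dense below $p$, exactly as for membership between set names, so that definability and the truth lemma for these atoms reduce to the already-established forcing relation for $\anf{v_0=v_1}$.

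Throughout the induction I would fix $\vec{\Gamma}\in(\C^\PP)^n$ and carry along, as the inductive hypothesis for a formula $\psi$, both that the class $F_\psi=\{\langle p,\vec{\sigma}\rangle\mid p\Vdash\psi(\vec{\sigma})\}$ lies in $\C$ and that the truth lemma holds for $\psi$. Two standing facts are used repeatedly: forcing is downward persistent ($p\Vdash\psi$ and $q\leq_\PP p$ give $q\Vdash\psi$, immediately from the definition), and, since $\MM$ is countable, below every condition there is a $\PP$-generic filter over $\MM$. For negation I would first show $p\Vdash\neg\psi$ iff no $q\leq_\PP p$ forces $\psi$ (the forward direction using the existence of generics, the backward direction using the truth lemma for $\psi$), which puts $F_{\neg\psi}\in\C$ by comprehension from $F_\psi$; the truth lemma for $\neg\psi$ then follows by noting that the decision set $\{p\mid p\Vdash\psi\text{ or }p\Vdash\neg\psi\}$ is dense and lies in $\C$. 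For disjunction the truth lemma is immediate from the inductive hypothesis, and for definability I would use the characterization that $p\Vdash\psi_0\vee\psi_1$ iff $\{q\leq_\PP p\mid q\Vdash\psi_0\text{ or }q\Vdash\psi_1\}$ is dense below $p$. For $\exists x\,\psi$ the truth lemma is obtained by choosing a name $\mu\in M^\PP$ for a witness and applying the hypothesis to $\psi(\mu)$, while definability uses that $p\Vdash\exists x\,\psi$ iff $\{q\leq_\PP p\mid\exists\mu\in M^\PP\;q\Vdash\psi(\mu)\}$ is dense below $p$.

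The step that needs genuine care---and the one I expect to be the main obstacle---is verifying at each stage that the defining condition only ever uses \emph{set} quantifiers together with \emph{class parameters}, so that first-order class comprehension in $\GB^-$ actually delivers the relevant forcing class in $\C$. Concretely, in the disjunction and existential cases the quantifiers range over $\PP$ and over $M^\PP$, both of which are subsets of $M$, and the inductively given classes $F_{\psi_i}$ (and the $\Gamma_k$) enter only as parameters; since $\GB^-$ proves comprehension for formulae with set quantifiers and class parameters, each $F_\varphi$ remains in $\C$. I would be careful to phrase ``dense below $p$'' so that it unfolds to such a formula (a universal set-quantifier over extensions followed by an existential set-quantifier), and to keep $\vec{\Gamma}$ fixed so that the whole argument is uniform in it; this is what makes the conclusion hold for every $\L^n$-formula over $\MM$.
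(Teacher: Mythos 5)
Your proposal is correct and follows essentially the same route as the paper's proof: the base case is Lemma \ref{lem:def subs ft}, the class-predicate atoms $\sigma\in\Gamma_k$ are handled by exactly the same characterization (density below $p$ of conditions extending some $r$ with $\langle\tau,r\rangle\in\Gamma_k$ and forcing $\sigma=\tau$, reducing everything to the forcing relation for $\anf{v_0=v_1}$), and compound formulae are treated by the standard recursion with the truth lemma verified as in set forcing. The only cosmetic difference is that you take $\neg,\vee,\exists$ as primitive and phrase their clauses as density conditions, whereas the paper uses $\wedge,\neg,\forall$ with direct recursion clauses; these are dual formulations of the same argument, and your extra attention to first-order class comprehension in $\GB^-$ is a point the paper leaves implicit.
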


\begin{proof}
By the previous lemma, we already know that $\PP$ satisfies the forcing theorem for all atomic formulae. Let us next consider formulas of the form $\anf{v_0\in V_1}$, involving a class variable $V_1$.

Let $\sigma\in M^\PP$ and $\Gamma\in\C^\PP$. We claim that $p\Vdash_\PP^{\MM,\Gamma}\sigma\in\Gamma$ if and only if the set
$$D=\{q\in\PP\mid\exists\langle\tau,r\rangle\in\Gamma\,[q\leq_\PP r\wedge q\Vdash_\PP^\MM\sigma=\tau]\}$$
is dense below $p$. Note that $D\in\C$ since the forcing relation for equality is definable. 
First assume that $p\Vdash_\PP^{\MM,\Gamma}\sigma\in\Gamma$ and let $q\leq_\PP p$. Let $G$ be a $\PP$-generic filter with 
$q\in G$. Then $\sigma^G\in\Gamma^G$, i.e. there is $\langle\tau,r\rangle\in\Gamma$ such that $r\in G$ and $\sigma^G=\tau^G$.
By the truth lemma for $\anf{v_0=v_1}$, there is $s\in G$ such that $s\Vdash_\PP^\MM\sigma=\tau$. But then every
$t\leq_\PP q,r,s$ is in $D$. 
Conversely, if $D$ is dense in $\PP$ and $G$ is $\PP$-generic over $\MM$ with $p\in G$, then we find
$q\leq_\PP p$ in $D\cap G$. By definition of $D$ there is $\langle\tau,r\rangle\in\Gamma$ such that 
$q\leq_\PP r$ and $q\Vdash_\PP\sigma=\tau$. Thus using that $q,r\in G$ we get $\sigma^G=\tau^G\in\Gamma^G$. 
The truth lemma for $\anf{v_0\in V_1}$ follows from the truth lemma for equality.

For composite $\L^n$-formulae, we can define the forcing relation by the usual recursion:
\begin{align*}
 \hspace{0.6cm}&p\Vdash_\PP^{\MM,\vec{\Gamma}}(\varphi\wedge\psi)(\sigma_0,\dots,\sigma_{m-1})&\Llr\quad& p\Vdash_\PP^{\MM,\vec{\Gamma}}\varphi(\sigma_0,\dots,\sigma_{m-1})\text{ and }p\Vdash_\PP^{\MM,\vec\Gamma}\psi(\sigma_0,\dots,\sigma_{m-1})\hspace{0.3cm}\\
 &p\Vdash_\PP^{\MM,\vec{\Gamma}}\neg\varphi(\sigma_0,\dots,\sigma_{m-1})&\Llr\quad&\forall q\leq_\PP p\, [q\nVdash_\PP^{\MM,\vec{\Gamma}}\varphi(\sigma_0,\dots,\sigma_{m-1})]\\
 &p\Vdash_\PP^{\MM,\vec{\Gamma}}\forall x\varphi(\sigma_0,\dots,\sigma_{m-1},x)&\Llr\quad&\forall\tau\in M^\PP\, [p\Vdash_\PP^{\MM,\vec{\Gamma}}\varphi(\sigma_0,\dots,\sigma_{m-1},\tau)],
\end{align*}
where $\sigma_0,\dots,\sigma_{m-1}\in M^\PP$ and $\vec{\Gamma}\in(\C^\PP)^n$. The truth lemma can be verified as for set forcing in each case.
\end{proof}



\section{Boolean completions}

In set forcing, every partial order has a unique Boolean completion whose elements are the regular open subsets
of the partial order. In this section we will investigate the relationship between the existence of a Boolean completion and the forcing theorem for notions of class forcing. 
Let $\MM=\langle M,\C\rangle$ be a fixed countable transitive model of $\GB^-$ and let $\PP=\langle P,\leq_\PP\rangle$ be a notion of class forcing.

Let $\L_{\On,0}$ denote the infinitary quantifier-free language that allows for set-sized
conjunctions and disjunctions. By $\L_{\On,0}^{\Vdash}(\PP,M)$ we denote the language of infinitary quantifier-free
formulae in the forcing language of $\PP$ over $M$, that allows reference to the generic predicate $G$. More precisely,
its constants are all elements of $M^\PP$, and it has an additional predicate $\dot G$. 
We define $\L_{\On,0}^{\Vdash}(\PP,M)$ and the class $\Fm_{\On,0}^{\Vdash}(\PP,M)$ of G\"odel codes of $\L_{\On,0}^{\Vdash}(\PP,M)$-formulae by simultaneous recursion:
\begin{enumerate}
 \item Atomic $\L_{\On,0}^{\Vdash}(\PP,M)$-formulae are of the form $\sigma=\tau,\sigma\in\tau$ or
 $\check p\in\dot G$ for $\sigma,\tau\in M^\PP$
and $p\in\PP$, where $\dot G=\{\langle\check p,p\rangle\mid p\in\PP\}\in\C^\PP$ is the canonical
class name for the generic filter. G\"odel codes of atomic $\L_{\On,0}^{\Vdash}(\PP,M)$-formulae are given by
\begin{align*}
 \gbr{\check p\in \dot G}&=\langle0,p\rangle\\
 \gbr{\sigma=\tau}&=\langle1,\sigma,\tau\rangle\\
 \gbr{\sigma\in\tau}&=\langle2,\sigma,\tau\rangle.
\end{align*}
\item If $\varphi$ is an $\L_{\On,0}^{\Vdash}(\PP,M)$-formula, then so is $\neg\varphi$, and its G\"odel code is given by 
\begin{align*}
 \gbr{\neg\varphi}=\langle3,\gbr{\varphi}\rangle.
\end{align*}
\item If $I\in M$ and for every $i\in I$, $\varphi_i$ is an $\L_{\On,0}^{\Vdash}(\PP,M)$-formula
such that $\langle\gbr{\varphi_i}\mid i\in I\rangle\in M$, 
then so are $\bigvee_{i\in I}\varphi_i$ and $\bigwedge_{i\in I}\varphi_i$ and their G\"odel codes are given by
\begin{align*}
 \gbr{\bigvee_{i\in I}\varphi_i}&=\langle4,I,\{\langle i,\gbr{\varphi_i}\rangle\mid i\in I\}\rangle\\
 \gbr{\bigwedge_{i\in I}\varphi_i}&=\langle5,I,\{\langle i,\gbr{\varphi_i}\rangle\mid i\in I\}\rangle.
\end{align*}
\end{enumerate}
Now define $\Fm_{\On,0}^{\Vdash}(\PP,M)\in\C$ to be the class of all G\"odel codes of infinitary formulae in the forcing language of $\PP$ over $M$.
If $G$ is a $\PP$-generic filter over $\MM$ and $\varphi$ is an $\L_{\On,0}^{\Vdash}(\PP,M)$-formula,
then we write $\varphi^G$ for the formula obtained from $\varphi$ by replacing each 
$\PP$-name $\sigma$ occurring in $\varphi$ by its evaluation $\sigma^G$, and by evaluating $\dot G$ as $G$. Note that 
$\varphi^G$ is a formula in the infinitary language $\L_{\On,0}$ with an additional predicate for the generic $G$.  
Given an $\L_{\On,0}^{\Vdash}(\PP,M)$-formula $\varphi$ and $p\in\PP$, we write 
$p\Vdash_\PP^{\MM}\varphi$
to denote that $\langle M[G],\in,G\rangle\models\varphi^G$ whenever $G$ is a $\PP$-generic
filter over $\MM$ with $p\in G$. 

\begin{definition}\label{def:uft}
We say $\PP$ \emph{satisfies the uniform forcing theorem for $\L_{\On,0}^{\Vdash}(\PP,M)$-formulae} if 
$$\{\langle p,\gbr{\varphi}\rangle\in P\times\Fm_{\On,0}^{\Vdash}(\PP,M)\mid p\Vdash_\PP^\MM\varphi\}\in\C$$
and $\PP$ satisfies the truth lemma for every $\L_{\On,0}^{\Vdash}(\PP,M)$-formula $\varphi$ over $\MM$, i.e.\ 
for every $\PP$-generic filter $G$ over $\MM$, if $M[G]\models\varphi^G$ then there is $p\in G$ such that $p\Vdash_\PP^\MM\varphi$.
\end{definition}

The following lemma will allow us to infer that the uniform forcing theorem for infinitary formulae is equivalent to the forcing 
theorem for equality. 

\begin{lemma}\label{lem:names uft}
 There is an assignment $$\Fm_{\On,0}^{\Vdash}(\PP,M)\ra M^\PP\times M^\PP; ~ \gbr{\varphi}\mapsto\langle\nu_\gbr{\varphi},\mu_\gbr{\varphi}\rangle$$ 
such that $\{\langle\gbr{\varphi},\nu_\gbr{\varphi},\mu_\gbr{\varphi}\rangle\mid\gbr{\varphi}\in\Fm_{\On,0}^{\Vdash}(\PP,M)\}\in\C$ and 
\begin{equation}\label{al:mu nu}
 \one_\PP\Vdash_\PP(\varphi\lr\nu_\gbr{\varphi}=\mu_\gbr{\varphi})
\end{equation}
for every $\varphi\in\mathcal L_{\On,0}^{\Vdash}(\PP,M)$. 
\end{lemma}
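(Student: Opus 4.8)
The plan is to define the assignment by recursion on the (well-founded) build-up of the Gödel codes in $\Fm_{\On,0}^{\Vdash}(\PP,M)$, arranging that the pair attached to $\varphi$ reduces the truth of $\varphi$ under an arbitrary generic $G$ to an equality of two $G$-evaluations. Throughout I write $\check 0=\emptyset$ and $\check 1=\{\langle\check 0,\one_\PP\rangle\}$, so that $\check 0^G=\emptyset$ and $\check 1^G=\{\emptyset\}$. For some subformulae it will be convenient to have a genuine \emph{Boolean-value name}, i.e.\ a $\theta\in M^\PP$ with $\theta^G\in\{\emptyset,\{\emptyset\}\}$ and $\theta^G=\{\emptyset\}$ exactly when $M[G]\models\varphi^G$; the pair is then $\langle\theta,\check 1\rangle$, and a negation is read off by comparing to $\check 0$ instead.

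For the atomic cases I would use the following rewritings, each staying inside $M^\PP$. The generic atom is handled directly by $\theta_{\gbr{\check p\in\dot G}}=\{\langle\check 0,p\rangle\}$, whose evaluation is $\{\emptyset\}$ iff $p\in G$. For equality I take $\langle\nu_{\gbr{\sigma=\tau}},\mu_{\gbr{\sigma=\tau}}\rangle=\langle\sigma,\tau\rangle$, and for membership I use the \emph{union trick}: since $\sigma^G\in\tau^G$ iff $\{\sigma^G\}\cup\tau^G=\tau^G$, the pair $\langle\{\langle\sigma,\one_\PP\rangle\}\cup\tau,\ \tau\rangle$ works, and the same device rewrites inclusion as $\langle\sigma\cup\tau,\ \tau\rangle$. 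Conjunctions are absorbed by the fact that equality of sets is itself a universal statement: to encode $\bigwedge_{i\in I}\varphi_i$ I form the two sequence names $\{\langle\op(\check i,\nu_{\gbr{\varphi_i}}),\one_\PP\rangle\mid i\in I\}$ and $\{\langle\op(\check i,\mu_{\gbr{\varphi_i}}),\one_\PP\rangle\mid i\in I\}$, whose evaluations agree iff $\nu_{\gbr{\varphi_i}}^G=\mu_{\gbr{\varphi_i}}^G$ for every $i\in I$ (here $\langle\gbr{\varphi_i}\mid i\in I\rangle\in M$ guarantees these are again names in $M$).

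The delicate cases are negation and disjunction, and this is where the substance of the argument lies. Once a formula is presented by a Boolean-value name $\theta$, negation is immediate and a disjunction $\bigvee_{i\in I}\varphi_i$ is handled by the name-union $\bigcup_{i\in I}\theta_{\gbr{\varphi_i}}$, whose evaluation is the supremum in $\{\emptyset,\{\emptyset\}\}$ of the $\theta_{\gbr{\varphi_i}}^G$ and is therefore $\{\emptyset\}$ iff some disjunct holds. The real problem is thus to manufacture a Boolean-value name indicating ``$\nu^G=\mu^G$'' from given names $\nu,\mu$. I would attempt this by a subsidiary recursion on $\langle\rank\nu+\rank\mu,\rank\nu\rangle$, rewriting equality as mutual inclusion, each inclusion as a set-indexed conjunction of the implications ``$\check r\in\dot G\to\rho\in\mu$'', and each membership as the corresponding set-indexed disjunction, so that the leaves are generic atoms and equalities of strictly smaller rank. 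The step I expect to be the genuine crux is to carry out the resulting meets and joins of Boolean-value names \emph{without ever leaving} $M^\PP$: the naive witnesses for a meet (antichains of incompatible conditions, or common refinements of pairs of conditions) are proper classes from the point of view of $M$ — precisely the phenomenon responsible for the failures established elsewhere in this paper — so the combination has to be effected purely through the name structure and the union trick rather than through conditions.

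Finally, for definability: since $\Fm_{\On,0}^{\Vdash}(\PP,M)\in\C$ and the whole construction is a single recursion whose defining clauses are first-order over $\langle M,\in\rangle$ with parameters from $M$ and $\C$, the graph of the assignment is definable over $\MM$ and hence lies in $\C$. Combining this with the equivalences verified clause by clause yields $\one_\PP\Vdash_\PP(\varphi\lr\nu_{\gbr{\varphi}}=\mu_{\gbr{\varphi}})$ for every $\varphi$, as required.
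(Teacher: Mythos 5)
Your atomic clauses and your conjunction clause coincide with the paper's (the pair $\langle\sigma,\tau\rangle$ for equality, the union trick $\langle\tau\cup\{\langle\sigma,\one_\PP\rangle\},\tau\rangle$ for membership, tagged sequence names for $\bigwedge_{i\in I}\varphi_i$), but there is a genuine gap exactly where you locate ``the substance of the argument'': negation and disjunction are never actually constructed. Your plan routes both through \emph{Boolean-value names}, i.e.\ names $\theta\in M^\PP$ with $\theta^G\in\{\emptyset,\{\emptyset\}\}$ deciding the formula, but such names are not available, and the proposal never produces them. Already the base cases of your subsidiary recursion fail: for $\sigma=\tau$ the natural candidate is $\theta=\{\langle\check 0,p\rangle\mid p\Vdash_\PP\sigma=\tau\}$, which is in general a proper class and moreover circular, since definability of the forcing relation for equality is precisely what this lemma is used to obtain (and by Theorem \ref{thm:ft truth pred} it can genuinely fail); for $\neg(\check r\in\dot G)$ one would need $\theta^G=\{\emptyset\}$ exactly when $r\notin G$, whose natural witness $\{\langle\check 0,q\rangle\mid q\perp_\PP r\}$ is again a proper class. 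The set-indexed meets that you flag as the ``genuine crux'' are likewise left unconstructed: saying the combination ``has to be effected purely through the name structure and the union trick'' names the obstacle but does not overcome it. As written, the recursion cannot be completed.

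The paper's proof solves this by two devices that avoid Boolean-value names altogether and work only with pairs. First, negation is eliminated: formulas are put in negation normal form via De Morgan for set-sized conjunctions and disjunctions, and negated atoms are rewritten infinitarily, e.g.\ $\sigma\nsubseteq\tau\equiv\bigvee_{\langle\pi,p\rangle\in\sigma}(\pi\notin\tau\wedge\check p\in\dot G)$ and $\sigma\notin\tau\equiv\bigwedge_{\langle\pi,p\rangle\in\tau}(\sigma\neq\pi\vee\check p\notin\dot G)$, so that the only surviving negated formula is $\check p\notin\dot G$, which has the trivial pair $\langle\emptyset,\{\langle\check 0,p\rangle\}\rangle$. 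Second---and this is the key idea missing from your proposal---disjunction is encoded by pairs: setting $\bar\nu_i=\op(\nu_{\gbr{\varphi_i}},\check i)$ and $\bar\mu_i=\op(\mu_{\gbr{\varphi_i}},\check i)$, one forms $\pi_{\gbr{\varphi}}=\{\langle\op(\bar\nu_i,\bar\mu_i),\one_\PP\rangle\mid i\in I\}\cup\{\langle\op(\bar\nu_i,\bar\nu_i),\one_\PP\rangle\mid i\in I\}$ and $\nu^i_{\gbr{\varphi}}=\pi_{\gbr{\varphi}}\setminus\{\langle\op(\bar\nu_i,\bar\mu_i),\one_\PP\rangle\}$, and then takes $\nu_{\gbr{\varphi}}=\{\langle\nu^i_{\gbr{\varphi}},\one_\PP\rangle\mid i\in I\}$ and $\mu_{\gbr{\varphi}}=\nu_{\gbr{\varphi}}\cup\{\langle\pi_{\gbr{\varphi}},\one_\PP\rangle\}$. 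Some disjunct holds iff $\pi_{\gbr{\varphi}}^G=(\nu^i_{\gbr{\varphi}})^G$ for some $i$ iff $\pi_{\gbr{\varphi}}^G\in\nu_{\gbr{\varphi}}^G$ iff $\nu_{\gbr{\varphi}}^G=\mu_{\gbr{\varphi}}^G$: the join is realized by membership of a distinguished element in an evaluation, not by any meet of conditions, antichains, or truth-value names. Replacing your Boolean-value-name machinery by these two steps repairs the argument.
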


\begin{proof}
We will argue by induction that, given names $\nu_\psi$ and $\mu_\psi$ satisfying \eqref{al:mu nu} for every proper subformula $\psi$ of $\varphi$, we can, uniformly in $\gbr{\varphi}$, define $\nu_\gbr{\varphi}$ and $\mu_\gbr{\varphi}$ such that \eqref{al:mu nu} holds.

Observe that since $\neg\bigwedge_{i\in I}\varphi_i\equiv\bigvee_{i\in I}\neg\varphi_i$ and 
$\neg\bigvee_{i\in I}\varphi_i\equiv\bigwedge_{i\in I}\neg\varphi_i$ we can assume that all formulae are in negation normal form,
i.e.\ the negation operator is applied to atomic formulae only. Next, due to the equivalences 
\begin{align*}
 \sigma\neq\tau&\equiv\sigma\nsubseteq\tau\vee\tau\nsubseteq\sigma,\\
 \sigma\nsubseteq\tau&\equiv\bigvee_{\langle\pi,p\rangle\in\sigma}(\pi\notin\tau\wedge\check{p}\in\dot{G}),\\
 \sigma\notin\tau&\equiv\bigwedge_{\langle\pi,p\rangle\in\tau}(\sigma\neq\pi\vee\check{p}\notin\dot{G}),
\end{align*}
we can further suppose that the only negated formulae are of the form $\check{p}\notin\dot G$. 

For the atomic cases, let
\begin{align*}
 \hspace{5cm}\nu_\gbr{\check{p}\in\dot{G}}&=\{\langle\check{0},p\rangle\},&\mu_\gbr{\check{p}\in\dot{G}}&=\check{1},\hspace{5cm}\\
 \nu_\gbr{\check{p}\notin\dot{G}}&=\emptyset,&\mu_\gbr{\check{p}\notin\dot{G}}&=\{\langle\check{0},p\rangle\},\\
 \nu_\gbr{\sigma=\tau}&=\sigma,&\mu_\gbr{\sigma=\tau}&=\tau,\\
 \nu_\gbr{\sigma\in\tau}&=\tau,&\mu_\gbr{\sigma\in\tau}&=\tau\cup\{\langle\sigma,\one_\PP\rangle\}.
\end{align*}
It is easy to check that \eqref{al:mu nu} holds for all atomic formulae.

If $\varphi$ is a conjunction of the form $\bigwedge_{i\in I}\varphi_i$ and $\nu_\gbr{\varphi_i},\mu_\gbr{\varphi_i}$ have already been defined for $i\in I$, let
\begin{align*}
 \nu_\gbr{\varphi}&=\{\langle\op(\nu_\gbr{\varphi_i},\check{i}),\one_\PP\rangle\mid i\in I\}\textrm{ and}\\
 \mu_\gbr{\varphi}&=\{\langle\op(\mu_\gbr{\varphi_i},\check{i}),\one_\PP\rangle\mid i\in I\}.
\end{align*}
If $G$ is $\PP$-generic over $M$ and $M[G]\models\varphi^G$, then $M[G]\models\varphi_i^G$ for all $i\in I$. 
By assumption, this means that $\nu_\gbr{\varphi_i}^G=\mu_\gbr{\varphi_i}^G$ for every $i\in I$, thus also 
$\nu_\gbr{\varphi}^G=\mu_\gbr{\varphi}^G$. The converse is similar. 

Next suppose that $\varphi$ is of the form $\bigvee_{i\in I}\varphi_i$. 
Let $\bar\nu_\gbr{\varphi_i}=\op(\nu_\gbr{\varphi_i},\check i)$ and $\bar\mu_\gbr{\varphi_i}=\op(\mu_\gbr{\varphi_i},\check i)$
for each $i\in I$.
Let
\begin{align*}
 \pi_\gbr{\varphi}&=\{\langle\op(\bar\nu_\gbr{\varphi_i},\bar\mu_\gbr{\varphi_i}),\one_\PP\rangle\mid i\in I\}\cup\{\langle\op(\bar\nu_\gbr{\varphi_i},\bar\nu_\gbr{\varphi_i}),\one_\PP\rangle\mid i\in I\},\\
 \nu_\gbr{\varphi}^i&=\pi_\gbr{\varphi}\setminus\{\langle\op(\bar\nu_\gbr{\varphi_i},\bar\mu_\gbr{\varphi_i}),\one_\PP\}.
\end{align*}
Now we define 
\begin{align*}
 \nu_\gbr{\varphi}&=\{\langle\nu_\gbr{\varphi}^i,\one_\PP\rangle\mid i\in I\}\\
 \mu_\gbr{\varphi}&=\nu_\gbr{\varphi}\cup\{\langle\pi_\gbr{\varphi},\one_\PP\rangle\}.
\end{align*}
If $G$ is $\PP$-generic and $M[G]\models\varphi^G$ there is 
some $i\in I$ such that $M[G]\models\varphi_i^G$. By induction, this implies that 
$M[G]\models\nu_\gbr{\varphi_{i}}^G=\mu_\gbr{\varphi_{i}}^G$. Thus 
$\pi_\gbr{\varphi}^G=(\nu_\gbr{\varphi}^i)^G$ and $\nu_\gbr{\varphi}^G=\mu_\gbr{\varphi}^G$. 
For the converse, suppose that there is a generic $G$ such that $M[G]\models\neg\varphi^G$, 
hence for every $i\in I$, $M[G]\models\neg\varphi_i^G$. But then in $M[G]$, 
for every $i\in I$, we have $\nu_\gbr{\varphi_i}^G\neq\mu_\gbr{\varphi_i}^G$.
Therefore $\pi_\gbr{\varphi}^G$ is not of the form $(\nu_\gbr{\varphi}^i)^G$ for any $i\in I$,
which shows that $\pi_\gbr{\varphi}^G\in\mu_\gbr{\varphi}^G\setminus\nu_\gbr{\varphi}^G$.
\end{proof}

Next, we will use the above lemma to provide a characterization of notions of class forcing which satisfy the forcing theorem, using Boolean completions. 

\begin{definition}
If $\BB$ is a Boolean algebra, then we say that $\BB$ is \emph{$M$-complete} if the 
supremum $\sup_\BB A$ of all elements in $A$ exists in $\BB$ for every $A\in M$ with $A\subseteq\BB$. 
\end{definition}

\begin{definition} 
We say that \emph{$\PP$ has a Boolean completion in $\MM$} if there is an $M$-complete Boolean algebra 
$\BB=\langle B,0_\BB,1_\BB\,\neg,\wedge,\vee\rangle$ such that $B$, all Boolean operations of $\BB$ and an injective dense embedding from $\PP$ into $\BB\setminus\{0_\BB\}$ are elements of $\C$.
\end{definition}

\begin{theorem}\label{thm:compl forcing thm} Assume that $\MM$ satisfies representatives choice and let 
$\PP=\langle P,\leq_\PP\rangle$ be a separative and antisymmetric notion of class forcing for $\MM$. Then the following statements (over $\MM$) are equivalent:
\begin{enumerate}
 \item $\PP$ satisfies the definability lemma for one of the $\L_\in$-formulae $\anf{v_0\in v_1}$ or $\anf{v_0=v_1}$.
 \item $\PP$ satisfies the forcing theorem for all $\L_\in$-formulae.
 \item $\PP$ satisfies the uniform forcing theorem for all $\L_{\On,0}^{\Vdash}(\PP,M)$-formulae.
 \item $\PP$ has a Boolean completion. 
\end{enumerate}

Moreover separativity and antisymmetricity of $\PP$ are only necessary for the implication from (3) to (4), in particular the equivalence of (1)--(3) and those being a consequence of (4) holds as well without these assumptions. In fact, without these assumptions, (4) implies that $\PP$ is separative and antisymmetric.
\end{theorem}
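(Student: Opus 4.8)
The plan is to establish the cycle of implications $(1)\Rightarrow(2)\Rightarrow(3)\Rightarrow(4)\Rightarrow(1)$, together with the structural consequences recorded in the final sentence of the theorem. The implication $(1)\Rightarrow(2)$ is immediate from Theorem \ref{thm:ft}, since that result already shows that definability of the forcing relation for one atomic formula yields the full forcing theorem for all $\L^n$-formulae, and in particular for all $\L_\in$-formulae (the case $n=0$). For $(2)\Rightarrow(3)$, I would invoke Lemma \ref{lem:names uft}: given an $\L_{\On,0}^{\Vdash}(\PP,M)$-formula $\varphi$, the names $\nu_{\gbr{\varphi}},\mu_{\gbr{\varphi}}$ satisfy $\one_\PP\Vdash_\PP(\varphi\lr\nu_{\gbr{\varphi}}=\mu_{\gbr{\varphi}})$, so that for any $p\in\PP$ we have $p\Vdash_\PP^\MM\varphi$ if and only if $p\Vdash_\PP^\MM\nu_{\gbr{\varphi}}=\mu_{\gbr{\varphi}}$. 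Since the assignment $\gbr{\varphi}\mapsto\langle\nu_{\gbr{\varphi}},\mu_{\gbr{\varphi}}\rangle$ is in $\C$ and the forcing relation for equality is definable by (2), the uniform forcing relation is definable; the truth lemma for infinitary formulae transfers from the truth lemma for equality through the same equivalence.

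For the key implication $(3)\Rightarrow(4)$, which is where separativity and antisymmetry enter, I would construct the Boolean completion directly from the uniform forcing relation. The natural candidate is to take as Boolean values equivalence classes of formulae in $\L_{\On,0}^{\Vdash}(\PP,M)$ under the relation $\varphi\sim\psi$ iff $\one_\PP\Vdash_\PP(\varphi\lr\psi)$, with Boolean operations induced by $\neg$, $\bigwedge$ and $\bigvee$; $M$-completeness comes precisely from the presence of set-indexed conjunctions and disjunctions in the language, since any $A\subseteq\BB$ with $A\in M$ can be realized as a set-sized disjunction whose Gödel code lies in $M$. The embedding sends $p\in\PP$ to the class of $\check p\in\dot G$. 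Here I would use representatives choice (assumed of $\MM$) to pass from the equivalence relation $\sim\,\in\C$ to an actual quotient structure $\BB\in\C$ with a surjection in $\C$, so that $\BB$ and all its operations are genuinely elements of $\C$ rather than mere definable classes. Separativity of $\PP$ guarantees that the embedding $p\mapsto[\check p\in\dot G]$ is \emph{injective} (distinct separative conditions are separated by a condition forcing membership in $\dot G$ for one but not the other), and antisymmetry is needed so that the preorder is a partial order mapping faithfully into $\BB\setminus\{0_\BB\}$; density of the image follows because below any nonzero Boolean value, realized by some formula true in a generic extension, the truth lemma produces a condition in the generic filter witnessing it.

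Finally, for $(4)\Rightarrow(1)$ I would argue that an $M$-complete Boolean completion $\pi:\PP\to\BB$ in $\C$ yields a definable forcing relation for equality: one defines the Boolean value $\llbracket\sigma=\tau\rrbracket_\BB$ by the usual recursion on names using the suprema and infima that exist by $M$-completeness (all of which is carried out inside $\C$ because $\BB$ and its operations lie in $\C$), and then $p\Vdash_\PP^\MM\sigma=\tau$ iff $\pi(p)\leq_\BB\llbracket\sigma=\tau\rrbracket_\BB$, which is definable over $\MM$ and hence an element of $\C$ by first-order class comprehension. The last sentence of the theorem — that the equivalences among (1)--(3) and their following from (4) hold without separativity or antisymmetry, and that conversely (4) forces $\PP$ to be separative and antisymmetric — I would dispatch by observing that only the $(3)\Rightarrow(4)$ argument used those hypotheses, while injectivity of a dense embedding into a Boolean algebra and antisymmetry of $\leq_\BB$ pull back to show that any $\PP$ admitting a Boolean completion must itself be separative and antisymmetric. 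The main obstacle I anticipate is the careful bookkeeping in $(3)\Rightarrow(4)$: ensuring that the quotient construction stays within $\C$ (which is exactly what representatives choice is for), that $M$-completeness genuinely follows from closure under $M$-indexed disjunctions, and that the interaction of separativity with injectivity of the embedding is handled correctly.
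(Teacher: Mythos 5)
Your proposal is correct and follows essentially the same route as the paper: the cycle $(1)\Rightarrow(2)$ via Theorem \ref{thm:ft}, $(2)\Rightarrow(3)$ via Lemma \ref{lem:names uft}, $(3)\Rightarrow(4)$ via the Lindenbaum-algebra quotient of $\Fm_{\On,0}^{\Vdash}(\PP,M)$ using representatives choice (with separativity and antisymmetry securing injectivity of $p\mapsto[\check p\in\dot G]$), and $(4)\Rightarrow(1)$ via recursively defined Boolean values with $p\Vdash_\PP\varphi(\sigma,\tau)$ iff $p\leq_{\BB}\llbracket\varphi(\sigma,\tau)\rrbracket$. This matches the paper's proof in both decomposition and the key constructions.
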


\begin{proof}
That (1) implies (2) is exactly the statement of Theorem \ref{thm:ft}.
We start by showing that (2) implies (3). In fact, we only need to assume the forcing theorem for equality. By Lemma \ref{lem:names uft}, 
for every $\L_{\On,0}^{\Vdash}(\PP,M)$-formula $\varphi$ there are $\PP$-names $\mu_\gbr{\varphi}$ and $\nu_\gbr{\varphi}$
such that $\{\langle\gbr{\varphi},\mu_\gbr{\varphi},\nu_\gbr{\varphi}\rangle\mid\gbr{\varphi}\in\Fm_{\On,0}^{\Vdash}(\PP,M)\}\in\C$
and $\one_\PP\Vdash_\PP(\varphi\lr\mu_\gbr{\varphi}=\nu_\gbr{\varphi})$. Therefore, we can define the forcing relation for $\L_{\On,0}^{\Vdash}(\PP,M)$-formulae by stipulating
\begin{align*}
 p\Vdash_\PP\varphi\text ~ \Longleftrightarrow ~ p\Vdash_\PP\nu_\gbr{\varphi}=\mu_\gbr{\varphi}.
\end{align*}
This makes the truth lemma for $\varphi$ an immediate consequence of the truth lemma for equality.
As the definability lemma holds for equality and $\{\langle\gbr{\varphi},\mu_\gbr{\varphi},\nu_\gbr{\varphi}\rangle\mid\gbr{\varphi}\in\Fm_{\On,0}^{\Vdash}(\PP,M)\}\in\C$, 
we have $\{\langle p,\gbr{\varphi}\rangle\mid p\Vdash_\PP\varphi\}=\{\langle p,\gbr{\varphi}\rangle\mid p\Vdash_\PP\nu_\gbr{\varphi}=\mu_\gbr{\varphi}\}\in\C$, thus proving the uniform forcing theorem for $\L_{\On,0}^{\Vdash}(\PP,M)$-formulae. 

Assume now that (3) holds. We will construct what could be seen as an analogue of the Lindenbaum algebra. 
Define a Boolean algebra $\BB$ in the following way:
Consider the class $\Fm_{\On,0}^{\Vdash}(\PP,M)$ of all G\"odel codes of infinitary formulae in the forcing language of $\PP$ 
endowed with the canonical Boolean operations, i.e.\ 
suprema and infima are just set-sized disjunctions and conjunctions of formulae and complements are just negations. 
In order to obtain a complete Boolean algebra from $\Fm_{\On,0}^{\Vdash}(\PP,M)$, consider the equivalence relation 
$$\gbr{\varphi}\approx\gbr{\psi}\quad\text{if and only if}\quad\one_\PP\Vdash_\PP\varphi\lr\psi.$$
Since $\MM$ satisfies representatives choice, there are $\BB\in\C$ and $\pi\in\C$ such that $\pi:\Fm_{\On,0}^{\Vdash}(\PP,M)\ra\BB$ is surjective and such that $\pi(\gbr{\varphi})=\pi(\gbr{\psi})$ if and only if $\gbr{\varphi}\approx\gbr{\psi}$. 
Now we can obtain induced Boolean operations on $\BB$ in the obvious way and define 
$0_\BB=\pi(\gbr{0\neq0})$ and $\one_\BB=\pi(\gbr{0=0})$. Clearly, $\BB$ is an $M$-complete Boolean algebra. 
We identify $p\in\PP$ with the formula $\pi(\gbr{\check{p}\in\dot{G}})$, thus obtaining a dense embedding $i:\PP\ra\BB$ in $\C$. 
Note that the injectivity of $i$ follows from the antisymmetry and separativity of $\PP$.

Finally, we prove that (4) implies (1). Assume that $\PP$ has a Boolean completion $\BB(\PP)$. Without loss of generality,
we consider $\PP$ to be a subset of $\BB(\PP)$. For $\sigma,\tau\in M^\PP$, we recursively define the Boolean values
\begin{align*}
 \llbracket\sigma\in\tau\rrbracket&=\text{sup}_\PP\{\llbracket\sigma=\pi\rrbracket\wedge p\mid\langle\pi,p\rangle\in\tau\},\\
 \llbracket\sigma=\tau\rrbracket&=\llbracket\sigma\subseteq\tau\rrbracket\wedge\llbracket\tau\subseteq\sigma\rrbracket\textrm{ and}\\
 \llbracket\sigma\subseteq\tau\rrbracket&=\text{inf}_\PP\{\neg\llbracket\pi\in\sigma\rrbracket\vee\llbracket\pi\in\tau\rrbracket\mid\pi\in\dom\sigma\}.
\end{align*}
Clearly, the set $\{\langle\sigma,\tau,\llbracket\varphi(\sigma,\tau)\rrbracket\rangle\mid\sigma,\tau\in M^\PP\}$ is in $\C$ for every atomic $\L_\in$-formula $\varphi(v_0,v_1)$.
We claim that for every atomic formula $\varphi(v_0,v_1)$, $\sigma,\tau\in M^\PP$ and for every $\PP$-generic $G$,
\begin{equation}\label{al:ft}
 M[G]\models\varphi(\sigma^G,\tau^G)\quad\text{iff}\quad\exists p\in G(p\leq_{\BB(\PP)}\llbracket\varphi(\sigma,\tau)\rrbracket)
\end{equation}
This can be shown inductively by considering the recursive construction of $\llbracket\varphi(\sigma,\tau)\rrbracket$. 
For example, assume that $M[G]\models\sigma^G\in\tau^G$, i.e.\ there 
is $\langle\pi,p\rangle\in\tau$ such that $p\in G$ and $\sigma^G=\pi^G$. Thus inductively, there is 
$r\in G$ such that $r\leq_{\BB(\PP)}\llbracket\sigma=\pi\rrbracket$. Let $q\in G$ be such that $q\leq_\PP p,r$, so 
$q\leq_{\BB(\PP)}\llbracket\sigma=\pi\rrbracket\wedge p\leq_{\BB(\PP)}\llbracket\sigma\in\tau\rrbracket$. 
Conversely, assume there is $p\in G$ such that $p\leq_{\BB(\PP)}\llbracket\sigma\in\tau\rrbracket$. By the definition of $\llbracket\sigma\in\tau\rrbracket$, there is 
a strengthening $q\in G$ of $p$ and $\langle\pi,r\rangle\in\tau$ such that $q\leq_{\BB(\PP)}\llbracket\sigma=\pi\rrbracket\wedge r$.
Then by induction, $\sigma^G=\pi^G$. Since $r\in G$, we obtain $\sigma^G\in\tau^G$. The other cases are similar.
 
Finally, using \eqref{al:ft} we prove that $p\Vdash_\PP\varphi(\sigma,\tau)$ iff $p\leq_{\BB(\PP)}\llbracket\varphi(\sigma,\tau)\rrbracket$. 
Since $\leq_{\BB(\PP)}\in\C$, this will finish the proof of the theorem. 
Suppose that $p\Vdash_\PP\varphi(\sigma,\tau)$. If $p\nleq_{\BB(\PP)}\llbracket\varphi(\sigma,\tau)\rrbracket$, then by separativity of $\BB(\PP)$ and 
density of $\PP$ in $\BB(\PP)$ there is $q\leq_\PP p$ such that $q\bot_{\BB(\PP)}\llbracket\varphi(\sigma,\tau)\rrbracket$. 
Now let $G$ be $\PP$-generic with $q\in G$. Then $p$ is also in $G$, so by assumption $M[G]\models\varphi(\sigma^G,\tau^G)$. By \eqref{al:ft}
there is $r\in G$ with $r\leq_{\BB(\PP)}\llbracket\varphi(\sigma,\tau)\rrbracket$. But then $r$ and $q$ are compatible, and hence so are 
$q$ and $\llbracket\varphi(\sigma,\tau)\rrbracket$, which is impossible. The converse is an immediate consequence of \eqref{al:ft}.
\end{proof}

In particular, this proves Theorem \ref{theorem:BooleanCompletion}.

\begin{corollary}\label{cor:pretame bc}
 If $\MM$ is a model of $\GB$ then every separative antisymmetric pretame notion of class forcing for $\MM$ has a Boolean completion in $\MM$. 
\end{corollary}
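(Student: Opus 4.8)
The plan is to obtain this as a direct application of Theorem \ref{thm:compl forcing thm}, by checking that all of its standing hypotheses are satisfied and then verifying condition (2) of that theorem, whence condition (4)—the existence of a Boolean completion—follows.

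First I would confirm that $\MM$ satisfies representatives choice. This was already observed in Section \ref{section:general setting}: since $\MM\models\GB$ includes the power set axiom, Scott's trick applies, so that for the equivalence relation $p\approx q$ given by $p\leq_\PP q\wedge q\leq_\PP p$ (and more generally for any $E\in\C$) the minimal-rank representative sets $[p]=\{q\in\PP\mid q\approx p\wedge\forall r\,[q\approx r\ra\rank(q)\leq\rank(r)]\}$ exist in $M$, and $p\mapsto[p]$ witnesses representatives choice. The remaining two standing hypotheses of Theorem \ref{thm:compl forcing thm}, namely separativity and antisymmetry of $\PP$, are part of the hypotheses of the corollary.

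Second, I would establish condition (2) of Theorem \ref{thm:compl forcing thm}, that $\PP$ satisfies the forcing theorem for all $\L_\in$-formulae. Because $\MM\models\GB$, the set part of $\MM$ satisfies $\ZF$ (the power set axiom is now available), and so Friedman's theorem—that every pretame notion of class forcing over a ground model of $\ZF$ satisfies the forcing theorem, see \cite{MR1780138}—applies to the pretame forcing $\PP$ and delivers exactly condition (2).

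Finally, with representatives choice, separativity, and antisymmetry all verified, Theorem \ref{thm:compl forcing thm} gives the equivalence of its four conditions; since condition (2) holds, so does condition (4), i.e.\ $\PP$ has a Boolean completion in $\MM$. Honestly, there is no hard combinatorial step here: the substantive work is carried out in Theorem \ref{thm:compl forcing thm}, and the only point requiring care is the correct invocation of the implication from pretameness to the forcing theorem, which relies on the presence of the power set axiom in $\MM$.
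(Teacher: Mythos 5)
Your proposal is correct and takes essentially the same approach as the paper: the paper likewise deduces the corollary by combining Friedman's pretameness result from \cite{MR1780138} with Theorem \ref{thm:compl forcing thm}, merely entering the equivalence at condition (1) (the definability lemma for atomic formulae, via {\cite[Theorem 2.18]{MR1780138}}) rather than at condition (2), and leaving the verification of representatives choice via the power set axiom implicit. Your explicit check of that hypothesis is a minor but harmless addition.
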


\begin{proof}
In \cite[Theorem 2.18]{MR1780138}, it is shown that every pretame notion of class forcing satisfies the definability lemma
for atomic formulae. The corollary thus follows from Theorem \ref{thm:compl forcing thm}. 
\end{proof}

\begin{lemma}\label{lem:KM bc}
 If $\MM$ is a model of $\KM$, then every separative antisymmetric notion of class forcing for $\MM$ has a Boolean completion in $\MM$. 
\end{lemma}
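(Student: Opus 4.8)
The plan is to reduce everything to Theorem \ref{thm:compl forcing thm} by verifying its condition (1)---the definability lemma for an atomic formula---and then to read off the Boolean completion. First I would check that the hypotheses of that theorem are met. Since $\MM\models\KM$, we have $\MM\models\GB$, and the Global Choice axiom of $\KM$ provides a global well-order, from which representatives choice follows as recorded in Section \ref{section:general setting}. Because $\PP$ is assumed separative and antisymmetric, the implication from (1) to (4) in Theorem \ref{thm:compl forcing thm} is available, so it suffices to show that $\PP$ satisfies the definability lemma for \anf{$v_0=v_1$} (equivalently, for \anf{$v_0\in v_1$}).

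The essential use of $\KM$, as opposed to the weaker $\GB^-$ over which the forcing theorem can fail, is that class recursion is available. I would define, by simultaneous recursion along the lexicographic order of the pairs $\langle\rank(\sigma)+\rank(\tau),\rank(\sigma)\rangle$, classes $R_\in$ and $R_=$ on $P\times M^\PP\times M^\PP$ following exactly the recursive clauses appearing in the proof of Lemma \ref{lem:def subs ft}: namely $\langle p,\sigma,\tau\rangle\in R_\in$ iff the set $\{q\in\PP\mid\exists\langle\rho,r\rangle\in\tau\,[q\le_\PP r\wedge\langle q,\sigma,\rho\rangle\in R_=]\}$ is dense below $p$, while $R_=$ is obtained from the corresponding $\subseteq$-relation whose clause refers only to $R_\in$ at strictly smaller name ranks. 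Since each clause invokes only previously computed instances together with set quantification over $\PP$ and the statement that a class is dense below a condition, class recursion together with the comprehension of $\KM$ yields $R_\in,R_=\in\C$.

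It then remains to verify that these recursively defined classes are the forcing relation, that is, $\langle p,\sigma,\tau\rangle\in R_=$ iff $p\Vdash_\PP^\MM\sigma=\tau$, and likewise for $\in$. I would prove this by induction along the same well-founded order, using that $\PP$-generic filters over $\MM$ exist because $\C$ is countable; the argument is precisely the truth-lemma computation carried out in Lemma \ref{lem:def subs ft}, except that the definability of the relevant auxiliary sets now comes for free from the recursion rather than from an assumed definability hypothesis. Once $R_=\in\C$ is identified with the forcing relation for \anf{$v_0=v_1$}, condition (1) holds, and Theorem \ref{thm:compl forcing thm} produces an $M$-complete Boolean algebra together with a dense embedding of $\PP$, both in $\C$, i.e.\ a Boolean completion in $\MM$.

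The main obstacle will be this correctness verification: one must run the mutual induction between $\in$, $=$ and $\subseteq$ and check, at each clause, that the density-below-$p$ formulation genuinely captures the semantic forcing relation in every generic extension. The remaining ingredients---the availability of representatives choice and of class recursion in $\KM$, and the final appeal to Theorem \ref{thm:compl forcing thm}---are routine given the stated results.
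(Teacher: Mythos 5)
Your proposal is correct, but it takes a genuinely different route from the paper's proof of Lemma \ref{lem:KM bc}. The paper makes no appeal to Theorem \ref{thm:compl forcing thm} here: using class recursion in $\KM$ it builds increasing sequences $\langle\PP_\alpha\mid\alpha\in\On^M\rangle$ and $\langle\QQ_\alpha\mid\alpha\in\On^M\rangle$ of separative preorders in which $\PP$ remains dense, where $\QQ_\alpha$ adjoins suprema of all set-sized subsets of $\PP_\alpha$ and $\PP_{\alpha+1}$ adjoins complements of all elements of $\QQ_\alpha$, and then passes to a quotient via Scott's trick; the union is the desired $M$-complete Boolean algebra, constructed by hand. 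You instead use $\KM$ class recursion to define the atomic forcing relations themselves, identify them with the semantic relations by the induction of Lemma \ref{lem:def subs ft}, and only then invoke (1)$\Rightarrow$(4) of Theorem \ref{thm:compl forcing thm}, so your completion comes out as the Lindenbaum algebra of infinitary formulae rather than the paper's hand-built object. Your diagnosis of where $\KM$ enters is exactly right: the slices $\{p\in P\mid\langle p,\sigma,\tau\rangle\in R_=\}$ can be proper classes when $P$ is, so the recursion has class-valued stages and genuinely needs impredicative comprehension, which is what fails over $\GB^-$. Each approach buys something: the paper's construction is self-contained and exhibits the completion concretely as $\PP$ with suprema and complements adjoined, and the paper then derives Corollary \ref{thm:KM bc} (the forcing theorem over $\KM$-models) from it via (4)$\Rightarrow$(2); your argument establishes the forcing theorem first and would give that corollary with no mention of Boolean algebras at all, inverting the paper's order of deduction and essentially recovering the style of proof in \cite{antos2015class}, to which the paper advertises its own route as an alternative. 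One repair you should make: with the lexicographic pairs $\langle\rank(\sigma)+\rank(\tau),\rank(\sigma)\rangle$, the clause for $R_\in(\cdot,\sigma,\tau)$ unpacks $R_=(\cdot,\sigma,\rho)$ into $R_\subseteq(\cdot,\sigma,\rho)$ \emph{and} $R_\subseteq(\cdot,\rho,\sigma)$, and the pair attached to $\langle\rho,\sigma\rangle$ need not be lexicographically smaller, since ordinal addition is not commutative (take $\rank(\sigma)=1$, $\rank(\rho)=\omega$, $\rank(\tau)=\omega+1$). For you this matters more than for the paper, because your order must support a \emph{definition} by recursion, not merely an induction; the fix is to run the recursion on unordered pairs ordered by the natural (Hessenberg) sum $\rank(\sigma)\oplus\rank(\tau)$, defining $R_\in$ and $R_\subseteq$ in both argument orders simultaneously at each stage, after which every clause refers to strictly smaller stages. (The same bookkeeping remark applies to the induction in the paper's Lemma \ref{lem:def subs ft}.)
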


\begin{proof}
Let $\MM=\langle M,\C\rangle$ be a model of $\KM$ and let $\PP=\langle P,\leq_\PP\rangle$ be a separative antisymmetric notion of class forcing for $\MM$. Making use of a suitable bijection in $\C$, we may assume that $P\cap\On^M=\emptyset$. Pick a disjoint partition $\langle A_\alpha\mid\alpha\in\On^M\rangle$ of $\On^M$ such that $\{\langle\alpha,\beta\rangle\mid\beta\in A_\alpha\}\in\C$. Using class recursion, we define $\subseteq$-increasing 
sequences $\langle\PP_\alpha\mid\alpha\in\On^M\rangle$ and $\langle\QQ_\alpha\mid\alpha\in\On^M\rangle$ of separative 
notions of class forcing containing $\PP$ such that $\{\langle p_0,p_1,\alpha\rangle\mid p_0\le_{\PP_\alpha}p_1\}\in\C$, $\{\langle q_0,q_1,\alpha\rangle\mid q_0\le_{\QQ_\alpha}q_1\}\in\C$
and $\PP$ is dense in each $\PP_\alpha^*$ and $\QQ_\alpha^*$, where $\PP_\alpha^*$ and $\QQ_\alpha^*$ are the notions of class forcing
obtained from $\PP_\alpha$ or $\QQ_\alpha$ respectively, by removing all conditions $p$ which stengthen every other condition (i.e.\ which  are equivalent to $0_{\PP_\alpha}$ or $0_{\QQ_\alpha}$ respectively).

Let $P_0=P$. If $\alpha$ is a limit ordinal, let $P_\alpha=\bigcup_{\beta<\alpha}P_\beta$ and $\leq_{\PP_\alpha}=\bigcup_{\beta<\alpha}\leq_{\PP_\beta}$ for every $\beta<\alpha$. 
Suppose that $\PP_\alpha$ has been defined. We construct $\QQ_\alpha$ by adding suprema for all subsets of $P_\alpha$ in $M$, and then construct $\PP_{\alpha+1}$ by adding negations for all elements of $Q_\alpha$.

More precisely, let $Q_\alpha=P_\alpha\cup\{\sup A\mid A\in M,A\subseteq P_\alpha\}$, where $\sup A\in A_{2\cdot\alpha}$ is different for each $A\in M$ with 
$A\subseteq P_\alpha$ and $\{\langle A,\sup A\rangle\mid A\in M,A\subseteq P_\alpha\}\in\C$. Thus $Q_\alpha\in\C$ and we define an ordering 
$\leq_{\QQ_\alpha}$ on $Q_\alpha$ with $\leq_{\QQ_\alpha}\cap\,(P_\alpha\times P_\alpha)=\,\leq_{\PP_\alpha}$ and $\leq_{\QQ_\alpha}\in\C$ in the following way:
\begin{equation}\label{al:sup}
 \begin{split}
\sup A\leq_{\QQ_\alpha} p\quad & \Llr\quad\forall a\in A(a\leq_{\PP_\alpha}p), \\
p\leq_{\QQ_\alpha}\sup A\quad & \Llr\quad A\text{ is predense below }p\text{ in }\PP_\alpha, \\ 
\sup A\leq_{\QQ_\alpha}\sup B\quad & \Llr\quad\forall a\in A(a\leq_{\QQ_\alpha}\sup B) 
 \end{split}
\end{equation}
for all $p\in\PP_\alpha$, and $A,B\in M$ with $A\subseteq P_\alpha$.  
Firstly, we check that $\PP$ is dense in $\QQ_\alpha^*$. If $A\subseteq P_\alpha$ with $A\in M$ such that $\sup A\in\QQ_\alpha^*$ then 
$A\cap\PP_\alpha^*\neq\emptyset$. Now if $a\in A$ is in $\PP_\alpha^*$ then $a$ strenghtens $\sup A$, and since by assumption $\PP$ is dense in $\PP_\alpha^*$,
there is $p\in\PP$ with $p\leq_{\PP_\alpha}a$. In particular, $p\leq_{\QQ_\alpha}\sup A$. 
In order to prove that $\QQ_\alpha$ is separative,
since $\sup\{p\}=p$ for $p\in\PP_\alpha$, it suffices to check that whenever $A,B\subseteq\PP_\alpha$ are sets 
with $\sup A\nleq_{\QQ_\alpha}\sup B$, then there is $p\leq_{\QQ_\alpha}\sup A$ incompatible with $\sup B$. 
So suppose that $\sup A\nleq_{\QQ_\alpha}\sup B$. Then there is $a\in A$ with $a\nleq_{\QQ_\alpha}\sup B$, i.e.\ 
there exists a strengthening $p\in\PP$ of $a$ such that each $q\leq_\PP p$ is incompatible with every element of $B$.
In particular, $p$ is incompatible with $\sup B$. 

Now let $\PP_{\alpha+1}=\QQ_\alpha\cup\{\neg q\mid q\in\QQ_\alpha\}$, where $\neg q\in A_{2\cdot\alpha+1}$ is different for each $q\in\QQ_\alpha$, such that $\{\langle q,\neg q\rangle\mid q\in\QQ_\alpha\}\in\C$. Thus $\PP_{\alpha+1}\in\C$ and we define an ordering $\leq_{\PP_{\alpha+1}}$ on $\PP_{\alpha+1}$
extending $\leq_{\QQ_\alpha}$ such that $\leq_{\PP_{\alpha+1}}\in\C$ as follows:
\begin{equation}\label{al:neg}
 \begin{split}
p\leq_{\PP_{\alpha+1}}\neg q\quad&\Llr\quad p\bot_{\QQ_\alpha}q, \\ 
\neg p\leq_{\PP_{\alpha+1}} q\quad&\Llr\quad\forall r\in\QQ_\alpha\ (r\parallel_{\QQ_\alpha} p\vee r\parallel_{\QQ_\alpha} q), \\ 
\neg p\leq_{\PP_{\alpha+1}}\neg q\quad&\Llr\quad q\leq_{\QQ_\alpha} p \end{split}
\end{equation}
for all $p,q\in\QQ_\alpha$. 
Again, we need to verify that $\PP$ is dense in $\PP_{\alpha+1}^*$. Let $q\in\QQ_\alpha$ be a condition such that $\neg q$ is non-zero, i.e.\ there is $p\in\PP_{\alpha+1}$
such that $\neg q\nleq_{\PP_{\alpha+1}} p$. If $p\in\QQ_\alpha$, then this means that there is $r\in\QQ_\alpha$ which is incompatible
with $p$ and $q$ in $\QQ_\alpha$. In particular, $r\leq_{\PP_{\alpha+1}}\neg q$ and since $\PP$ is dense in $\QQ_\alpha$, 
we can strengthen $r$ to some condition in $\PP$. Otherwise, $p$ is of the form $\neg r$ for some 
$r\in\QQ_\alpha$. Therefore, $r\nleq_{\QQ_\alpha}q$ and so by separativity of $\QQ_\alpha$ there is $s\in\QQ_\alpha$ 
with $s\leq_{\QQ_\alpha}r$ and $s\bot_{\QQ_\alpha}q$. But then $s\leq_{\PP_{\alpha+1}}\neg q$ and once again we apply the density 
of $\PP$ in $\QQ_\alpha$ to obtain the result. 

Secondly, we need to check that separativity is preserved. First suppose that 
$p,q\in\QQ_\alpha$ such that $p\nleq_{\PP_{\alpha+1}}\neg q$. 
Then $p$ and $q$ are compatible, hence there is 
$r\in\QQ_\alpha$ with $r\leq_{\QQ_\alpha}p,q$. In particular, $r$ and $\neg q$ are incompatible in $\PP_{\alpha+1}$. 
If $\neg p\nleq_{\PP_{\alpha+1}}q$, then there is $r\in\PP$ such that $r\bot_{\QQ_\alpha}p$ and $r\bot_{\QQ_\alpha}q$.
But this means that $r\leq_{\PP_{\alpha+1}}\neg p,\neg q$ and so $r$ and $q$ are incompatible in $\PP_{\alpha+1}$. 
Finally, suppose that $\neg p\nleq_{\PP_{\alpha+1}}\neg q$. 
Then $q\nleq_{\QQ_\alpha}p$, so by separativity of $\QQ_\alpha$ there is a strengthening $r\in\QQ_\alpha$ of $q$ with 
$r\bot_{\QQ_\alpha}p$. This means that $r\leq_{\PP_{\alpha+1}}\neg p$ and clearly $r$ is incompatible with $\neg q$. 

Then $\bigcup_{\alpha\in\On^M}\PP_\alpha=\bigcup_{\alpha\in\On^M}\QQ_\alpha$ is in $\C$. However, the order given 
by $p\leq q$ if and only if there is $\alpha\in\On^M$ such that $p,q\in\PP_\alpha$ and $p\leq_{\PP_\alpha}q$ is
only a preorder. To obtain a partial order, consider the equivalence relation 
$$p\approx q\quad\Llr\quad\exists\alpha\in\On^M(p,q\in\PP_\alpha\wedge p\leq_{\PP_\alpha}q\wedge q\leq_{\PP_\alpha}p).$$
For $p\in\bigcup_{\alpha\in\On}\PP_\alpha$, we define 
$$[p]=\{q\mid q\approx p\,\land\,\forall r\,[r\approx p\ra\rank(r)\geq\rank(q)]\}\in M$$
and $\BB(\PP)=\{[p]\mid\exists\alpha\in\On^M(p\in\PP_\alpha)\}$.
Now we can introduce the ordering, suprema, infima and complements in $\BB(\PP)$ in the canonical way and let
$\one_{\BB(\PP)}=[\one_\PP]$ as well as $0_{\BB(\PP)}=\neg\one_{\BB(\PP)}$. More precisely, if $A\subseteq\BB(\PP)$ is a set, 
then let $\alpha\in\On^M$ be least such that every member of $A$ has a representative in $(\V_\alpha)^M$ and $\bar A=\{p\in(\V_\alpha)^M\mid\exists\beta\in\On^M(p\in\PP_\beta\wedge[p]\in A)\}$.
Then we can define $\sup A=[\sup_{\QQ_\beta}\bar A]$, where $\beta$ is the least ordinal with $\bar A\subseteq\PP_\beta$. It follows directly from \eqref{al:sup} that $\sup A$ is well-defined and that it is the supremum of $A$ in $\BB(\PP)$. 
Complements are defined in the same way, i.e.\ for $[p]\in\BB(\PP)$ with $p\in\QQ_{\alpha}$ put $\neg[p]=[\neg p]$, 
where $\neg p$ is defined in $\PP_{\alpha+1}$. Again, it is a straightforward consequence of \eqref{al:neg} and 
the density of $\PP$ in $\QQ_{\alpha}^*$ that this is well-defined and that this actually defines the complement of $[p]$ in $\BB(\PP)$. 
Furthermore, if $A\in M$ is a subset of $\BB(\PP)$, then we let $\inf A=\neg(\sup\{\neg a\mid a\in A\})$.  
Moreover, the embedding $\pi:\PP\ra\BB(\PP)\setminus\{0_{\BB(\PP)}\},p\mapsto[p]$ is dense by construction and 
it follows from $\PP$ being separative and antisymmetric that $\pi$ is injective. 
\end{proof}

Note that Robert Owen has erroneously used a similar proof in \cite{owen2008outer} to show that in $\ZFC$ every separative antisymmetric notion of class forcing 
has a Boolean completion. However in Section \ref{sec:fr not def} we will show that the definability lemma
for atomic formulae can fail in $\ZF^-$ (and also in $\ZFC$), and hence Boolean completions do not always exist. 

Regarding separativity, if the $\GB^-$-model $\MM$ satisfies representatives choice, one can build a separative quotient for any notion of class forcing.
Recall that the separative quotient is obtained by considering the equivalence relation
\begin{align*}
 p\approx q\quad\text{if and only if}\quad\forall r\in\PP\:(r\parallel_\PP p\lr r\parallel_\PP q).
\end{align*}
Unlike in set forcing the equivalence classes can in fact be proper classes. 
Using representatives choice, there is a surjective map $\pi:\PP\ra\Ss(\PP)$ with $\pi,\Ss(\PP)\in\C$ such that 
$\pi(p)=\pi(q)$ if and only if $p\approx q$. 
Furthermore, we equip $\Ss(\PP)$ with the usual ordering given by 
$$\pi(p)\leq_{\Ss(\PP)}\pi(q)\quad\text{if and only if}\quad\forall r\in\PP(r\parallel_\PP p\ra r\parallel_\PP q).$$
It is straightforward to check that the ordering $\leq_{\Ss(\PP)}$ is well-defined. 
For $\sigma\in M^\PP$, define recursively
$$\sigma^\pi=\{\langle\tau^\pi,\pi(p)\rangle\mid\langle\tau,p\rangle\in\sigma\}.$$
It is easy to see that as in set forcing, $\PP$ and $\Ss(\PP)$ generate the same generic extensions and
$$p\Vdash_\PP\varphi(\sigma_0,\dots,\sigma_{n-1})~\Llr~\pi(p)\Vdash_{\Ss(\PP)}\varphi(\sigma_0^\pi,\dots,\sigma_{n-1}^\pi).$$
The following provides an alternative proof of \cite[Lemma 11, Lemma 12]{antos2015class}.

\begin{corollary}\label{thm:KM bc}
 If $\MM$ is a model of $\KM$, every notion of class forcing satisfies the forcing 
theorem for all $\L^n$-formulae over $\MM$.
\end{corollary}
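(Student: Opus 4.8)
The plan is to reduce the general case to the separative, antisymmetric case already handled by Lemma~\ref{lem:KM bc}. Since $\MM$ is a model of $\KM$, it carries a global well-order and hence satisfies representatives choice, so I can form the separative quotient $\Ss(\PP)$ together with the quotient map $\pi\colon\PP\ra\Ss(\PP)$ and the induced name transformation $\sigma\mapsto\sigma^\pi$, all lying in $\C$, exactly as in the discussion preceding the corollary. By construction $\Ss(\PP)$ is a separative antisymmetric notion of class forcing for $\MM$, so Lemma~\ref{lem:KM bc} applies and yields a Boolean completion of $\Ss(\PP)$ in $\MM$.

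With a Boolean completion in hand, I would invoke Theorem~\ref{thm:compl forcing thm}: its implication from~(4) to~(1), which requires no further hypotheses, tells us that $\Ss(\PP)$ satisfies the definability lemma for $\anf{v_0=v_1}$, i.e.\ the class
$$\{\langle q,\bar\sigma,\bar\tau\rangle\mid q\Vdash_{\Ss(\PP)}\bar\sigma=\bar\tau\}$$
belongs to $\C$. The next step is to pull this definability back along $\pi$. Using the correspondence $p\Vdash_\PP\sigma=\tau\Llr\pi(p)\Vdash_{\Ss(\PP)}\sigma^\pi=\tau^\pi$ recorded above, together with the fact that $\pi$ and $\sigma\mapsto\sigma^\pi$ are members of $\C$, first-order class comprehension in $\MM$ shows that $\{\langle p,\sigma,\tau\rangle\mid p\Vdash_\PP\sigma=\tau\}\in\C$, so $\PP$ itself satisfies the definability lemma for $\anf{v_0=v_1}$. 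Finally, Theorem~\ref{thm:ft} upgrades this single instance of definability for an atomic formula to the full forcing theorem for all $\L^n$-formulae over $\MM$.

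The routine verifications (that $\Ss(\PP)$ is separative and antisymmetric, and that the quotient name transformation respects the forcing relation) are already provided by the surrounding material, so the only genuine point requiring care is the transfer step: I must make sure that the relevant maps and the $\Ss(\PP)$-forcing relation are honestly elements of $\C$ so that comprehension can be applied. Since $\KM$ supplies class recursion (needed to define $\sigma\mapsto\sigma^\pi$) and full second-order comprehension, this is exactly where the strength of $\KM$ is used, and I expect it to be the main---though not deep---obstacle. Note also that restricting attention to the purely first-order atomic formula $\anf{v_0=v_1}$ means the transfer involves only set names and no class parameters, which keeps this step clean.
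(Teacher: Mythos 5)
Your proposal is correct and follows essentially the same route as the paper's proof: pass to the separative quotient $\Ss(\PP)$ (available since $\KM$ provides a global well-order), apply Lemma~\ref{lem:KM bc} to obtain a Boolean completion, invoke Theorem~\ref{thm:compl forcing thm}, and transfer back to $\PP$ via the correspondence $p\Vdash_\PP\varphi(\vec\sigma)\Llr\pi(p)\Vdash_{\Ss(\PP)}\varphi(\vec\sigma^\pi)$. The only difference is cosmetic: you transfer just the definability lemma for $\anf{v_0=v_1}$ and then upgrade with Theorem~\ref{thm:ft}, whereas the paper transfers the full forcing theorem for $\L_\in$-formulae directly; your variant is, if anything, slightly tidier about arriving at the stated conclusion for all $\L^n$-formulae, including class-name parameters.
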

\begin{proof}
Since by $\KM$ there is a global well-ordering of $\MM$, every notion of class forcing $\PP$ of $\MM$ has a separative quotient $\Ss(\PP)$ over $\MM$. 
Therefore, Lemma \ref{lem:KM bc} implies that $\Ss(\PP)$ has a Boolean completion. By Theorem \ref{thm:compl forcing thm}, 
$\Ss(\PP)$ satisfies the forcing theorem for all $\L_\in$-formulae. Now by the above observations, this implies that 
$\PP$ satisfies the forcing theorem for all $\L_\in$-formulae.
\end{proof}

\section{Approachability by projections}\label{section:Approach}
As usual, we fix a countable transitive model $\MM=\langle M,\C\rangle$ of $\GB^-$. We define a fairly weak combinatorial condition on notions of class forcing that implies the forcing theorem to hold. In particular, this property is 
satisfied by the forcing notions $\Col(\omega,\On)^M$, $\Col_*(\omega,\On)^M$ and $\Col_\ge(\omega,\On)^M$ from Section \ref{examples}.

\begin{definition}\label{appproj}
We say that a class forcing $\PP=\langle P,\leq_\PP\rangle$ for $\MM$ is \emph{approachable by projections} if it can be written as 
a continuous, increasing union $\PP=\bigcup_{\alpha\in\On^M}\PP_\alpha$ for a sequence $\langle \PP_\alpha\mid\alpha\in\On\rangle\in\C$ of notions 
of set forcing $\PP_\alpha=\langle P_\alpha,\le_{\PP_\alpha}\rangle$, where $\le_{\PP_\alpha}$ is the ordering on $P_\alpha$ induced by $\PP$, for which there exists a sequence of maps $\langle\pi_{\alpha+1}\mid\alpha\in\On^M\rangle$ so that 
$\pi_{\alpha+1}\colon P\to P_{\alpha+1}$, 
$\{\langle\alpha,p,\pi_{\alpha+1}(p)\rangle\mid\alpha\in\On^M,p\in P\}\in\C$
 and for every $\alpha\in\On^M$, the following hold:
\begin{enumerate}
  \item $\pi_{\alpha+1}(\one_\PP)=\one_\PP$,
  \item $\forall p,q\in P\ (p\leq_\PP q\to\pi_{\alpha+1}(p)\leq_{\PP}\pi_{\alpha+1}(q))$,
  \item $\forall p\in P\,\forall q\leq_{\PP_{\alpha+1}}\pi_{\alpha+1}(p)\,\exists r\leq_\PP p\ ( \pi_{\alpha+1}(r)\leq_{\PP} q)$,
  \item $\forall p\in P_\alpha\,\forall q\in P\ (\pi_{\alpha+1}(q)\leq_{\PP} p\to q\leq_\PP p)$ and
  \item $\pi_{\alpha+1}$ is the identity on $P_{\alpha}$.
\end{enumerate}
\end{definition}
Note that (in order to justify our terminology), each $\pi_{\alpha+1}$ is in particular (by (1)--(3)) a projection from $\PP$ to $\PP_{\alpha+1}$. It follows that each $\pi_{\alpha+1}$ is a dense embedding and thus $\pi_{\alpha+1}''G$ is a $\PP_{\alpha+1}$-generic filter whenever $G$ is a $\PP$-generic filter.

\begin{lemma}\label{lem:proj gen ext}
If $\mathbb{P}=\bigcup_{\alpha\in\On^M}\PP_\alpha$ is approachable by projections with projections 
$\pi_{\alpha+1}:\PP\ra\PP_{\alpha+1}$ and $G$ is $\PP$-generic, then $M[G]\subseteq\bigcup_{\alpha\in\On^M}M[\pi_{\alpha+1}''G]$, and the latter is a union of set-generic extensions of $M$.
\end{lemma}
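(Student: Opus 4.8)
The plan is to prove the two assertions separately. For the claim that $\bigcup_{\alpha\in\On^M}M[\pi_{\alpha+1}''G]$ is a union of set-generic extensions of $M$, I would simply invoke the observation recorded immediately after Definition \ref{appproj}: each $\pi_{\alpha+1}$ is a projection, so $G_{\alpha+1}:=\pi_{\alpha+1}''G$ is a $\PP_{\alpha+1}$-generic filter over $\MM$ whenever $G$ is $\PP$-generic; since each $\PP_{\alpha+1}$ is a notion of set forcing, $M[\pi_{\alpha+1}''G]=M[G_{\alpha+1}]$ is by definition a set-generic extension of $M$.

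The substantial part is the inclusion $M[G]\subseteq\bigcup_{\alpha\in\On^M}M[\pi_{\alpha+1}''G]$. Given $x\in M[G]$, fix $\sigma\in M^\PP$ with $x=\sigma^G$. First I would locate a single ordinal $\gamma\in\On^M$ such that every condition occurring hereditarily in $\sigma$ lies in $P_\gamma$. This is where Collection in $M$ enters: the set $S$ of all conditions appearing hereditarily in $\sigma$ is an element of $M$ (a definable subclass of the transitive closure of $\sigma$), and since $P=\bigcup_{\alpha}P_\alpha$, for each $p\in S$ there is an ordinal $\alpha$ with $p\in P_\alpha$. Applying Collection in $M$ and taking the supremum of the resulting set of ordinals yields $\gamma\in\On^M$ with $S\subseteq P_\gamma$, using that the union $\bigcup_\alpha P_\alpha$ is increasing.

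The key technical step is a coherence lemma between $G$ and $G_{\gamma+1}=\pi_{\gamma+1}''G$: for every $p\in P_\gamma$ one has $p\in G$ if and only if $p\in G_{\gamma+1}$. The forward direction uses clause (5), namely $\pi_{\gamma+1}(p)=p$, so $p\in G$ gives $p=\pi_{\gamma+1}(p)\in G_{\gamma+1}$. For the converse, if $p\in P_\gamma$ and $p=\pi_{\gamma+1}(q)$ for some $q\in G$, then clause (4) applied with $\pi_{\gamma+1}(q)=p\leq_\PP p$ yields $q\leq_\PP p$, whence $p\in G$ because $G$ is a filter. With this in hand I would reinterpret $\sigma$ as a $\PP_{\gamma+1}$-name, which is legitimate since all of its conditions lie in $P_\gamma\subseteq P_{\gamma+1}$, and prove by induction on name rank that $\sigma^G=\sigma^{G_{\gamma+1}}$, the right-hand side denoting the evaluation of $\sigma$ by the $\PP_{\gamma+1}$-generic filter $G_{\gamma+1}$. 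The induction step is immediate from the coherence lemma together with the inductive hypothesis applied to the names $\tau$ with $\langle\tau,p\rangle\in\sigma$. Since $\sigma\in M$ is a $\PP_{\gamma+1}$-name, this gives $x=\sigma^G=\sigma^{G_{\gamma+1}}\in M[G_{\gamma+1}]=M[\pi_{\gamma+1}''G]$, completing the inclusion.

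I expect the main obstacle to be isolating the bounding ordinal $\gamma$ cleanly, that is, ensuring that the set of conditions in the name is captured by a single set-forcing level $P_\gamma$ via Collection and the increasingness of the union, together with verifying the backward direction of the coherence lemma, since that is the one place where clause (4) of approachability by projections is genuinely needed. The remaining induction on name rank is then entirely routine.
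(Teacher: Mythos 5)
Your proof is correct and takes essentially the same route as the paper's (much terser) argument: bound the conditions appearing hereditarily in $\sigma$ inside some $P_\gamma$, note that $G_{\gamma+1}=\pi_{\gamma+1}''G$ is $\PP_{\gamma+1}$-generic since $\pi_{\gamma+1}$ is a dense embedding, and then check $\sigma^G=\sigma^{G_{\gamma+1}}$ by induction on name rank. Your write-up in fact supplies two details the paper elides: the Collection argument producing the bounding ordinal $\gamma$, and the observation that the backward direction of the coherence claim ($p\in G_{\gamma+1}\cap P_\gamma$ implies $p\in G$) requires clause (4) of the definition of approachability by projections, whereas the paper cites only clause (5).
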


\begin{proof}
If $\sigma$ is a $\PP$-name, then there is $\alpha\in\On^M$ such that $\sigma$ is already a $\PP_\alpha$-name. 
Since $\pi_{\alpha+1}$ is dense, $G_{\alpha+1}=\pi_{\alpha+1}''G$ is $\PP_{\alpha+1}$-generic. By property (5) of Definition
\ref{appproj}, $\sigma^G=\sigma^{G_{\alpha+1}}\in M[G_{\alpha+1}]$. 
\end{proof}


\begin{lemma}\label{lem:col proj}
  $\Col(\omega,\On)^M$, $\Col_*(\omega,\On)^M$ and $\Col_{\ge}(\omega,\On)^M$ are approachable by projections.
\end{lemma}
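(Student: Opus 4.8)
The plan is to present, for each of the three forcings, an explicit continuous increasing stratification into set forcings together with projections witnessing Definition \ref{appproj}, and then to verify conditions (1)--(5). For $\Col(\omega,\On)^M$ and $\Col_*(\omega,\On)^M$ I would let $P_\alpha$ consist of those conditions whose range is contained in $\alpha$ (additionally requiring $\dom{p}\in\omega$ in the case of $\Col_*$). The union $\bigcup_{\alpha\in\On^M}\PP_\alpha$ is increasing, equals $\PP$, and is continuous at limits since every condition has finite range. As projection I would take the domain-preserving map
$$\pi_{\alpha+1}(p)(n)=\begin{cases}p(n)&\text{if }p(n)<\alpha,\\ \alpha&\text{if }p(n)\geq\alpha,\end{cases}$$
so that $\pi_{\alpha+1}$ truncates every value $\geq\alpha$ to the single top value $\alpha\in\alpha+1$ while leaving smaller values and the domain untouched.

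For $\Col_\geq(\omega,\On)^M$ I would reuse the stratification $\PP_\alpha=\Col_\geq(\omega,\alpha)$ from the proof of Lemma \ref{lemma:properties col}(5), and define $\pi_{\alpha+1}(p)$ to leave $p(n)$ unchanged whenever it is an ordinal below $\alpha+1$ or a marker $\geq\beta$ with $\beta\leq\alpha+1$, and to replace it by the top marker $\geq(\alpha+1)$ otherwise. In each case the assignment $\langle\alpha,p\rangle\mapsto\pi_{\alpha+1}(p)$ is uniformly definable over $M$, so the sequence of strata and the sequence of projections lie in $\C$ by class comprehension; conditions (1), (2) and (5) --- preservation of $\one_\PP$, monotonicity, and agreement with the identity on $P_\alpha$ --- then follow by direct inspection of these definitions.

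The main obstacle will be condition (3): given $p\in P$ and $q\leq_{\PP_{\alpha+1}}\pi_{\alpha+1}(p)$, I must produce $r\leq_\PP p$ with $\pi_{\alpha+1}(r)\leq_\PP q$. The idea is to set $\dom{r}=\dom{q}$, to restore the original value $r(n)=p(n)$ on those coordinates of $\dom{p}$ that the projection has frozen to the top value, and to follow $q$ on all remaining coordinates. What makes this work is that the top value is a fixed point of $\pi_{\alpha+1}$ that admits no proper strengthening inside the stratum $\PP_{\alpha+1}$, so on a frozen coordinate $q$ is forced to carry exactly that top value and restoring $p(n)$ reprojects precisely to it. Here the subtle point, and the reason the stratifications must be chosen as above, is twofold. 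First, on the coordinates left unchanged by $\pi_{\alpha+1}$ one must copy $q$ rather than $p$, since for $\Col_\geq$ the condition $q$ may strictly strengthen $\pi_{\alpha+1}(p)$ there; for $\Col$ and $\Col_*$ this issue disappears because a condition below $\pi_{\alpha+1}(p)$ must agree with it on all of $\dom{p}$. Second, for $\Col_\geq$ the freeze target must be the maximal marker $\geq(\alpha+1)$ and not the ordinal $\alpha$: a refinement could sharpen a marker to an ordinal, which no value of $\pi_{\alpha+1}(r)$ can reproduce, whereas the only strengthening of $\geq(\alpha+1)$ available within $\Col_\geq(\omega,\alpha+1)$ is $\geq(\alpha+1)$ itself.

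Finally I would verify condition (4): if $p\in P_\alpha$ and $\pi_{\alpha+1}(q)\leq_\PP p$, then already $q\leq_\PP p$. Since every value of $p$ is small --- an ordinal below $\alpha$, or a marker $\geq\beta$ with $\beta\leq\alpha$ --- and since $\pi_{\alpha+1}$ only ever replaces a value by one at least as large, the fact that $\pi_{\alpha+1}(q)(n)$ strengthens $p(n)$ on each $n\in\dom{p}$ can only come from $q(n)$ itself strengthening $p(n)$; a coordinatewise check then gives $q\leq_\PP p$. Assembling (1)--(5) establishes that all three forcings are approachable by projections.
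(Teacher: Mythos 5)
Your proof is correct and takes essentially the same approach as the paper's: the same stratifications $\PP_\alpha=\Col(\omega,\alpha)$ (respectively $\Col_*(\omega,\alpha)$, $\Col_\geq(\omega,\alpha)$), the same truncation projections, and the same witnesses for conditions (3) and (4) --- the paper's witness for (3) being $r=p\cup\{\langle n,q(n)\rangle\mid n\in\dom{q}\setminus\dom{p}\}$, which agrees with yours because for $\Col(\omega,\On)^M$ and $\Col_*(\omega,\On)^M$ any $q\leq_{\PP_{\alpha+1}}\pi_{\alpha+1}(p)$ must agree with $\pi_{\alpha+1}(p)$, hence with $p$, on the unfrozen part of $\dom{p}$. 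The paper works out only $\Col(\omega,\On)^M$ in detail and declares the other cases similar, so your explicit treatment of $\Col_\geq(\omega,\On)^M$ (copying $q$ rather than $p$ on unfrozen coordinates, and freezing to the marker $\geq(\alpha+1)$) supplies precisely the adjustments that case needs; the only quibble is that with $\PP_\alpha=\Col_\geq(\omega,\alpha)$ one should redefine limit strata as unions of the earlier ones, since conditions mentioning the marker $\geq\lambda$ would otherwise break literal continuity of the sequence at a limit $\lambda$.
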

\begin{proof}
Let $\PP=\Col(\omega,\On)^M$ and let $\PP_\alpha=\Col(\omega,\alpha)$. Furthermore, take $\pi_{\alpha+1}$ to be the map that, for $p\in\Col(\omega,\On)^M$, replaces the value of
  $p(n)$ by $\alpha$ whenever $p(n)>\alpha$. Conditions (1), (2) and (5) of Definition \ref{appproj} are trivially satisfied.
  For (3) let $p\in \Col(\omega,\On)^M$ and $q\in \Col(\omega,\alpha+1)$ such that 
  $q\leq_{\PP_{\alpha+1}} \pi_{\alpha+1}(p)$. Then $r=p\cup\{\langle n,q(n)\rangle\mid n\in\dom q\setminus \dom p\}$
  satisfies $r\leq_\PP p$ and $\pi_{\alpha+1}(r)\leq_\PP q$.
  For (4), consider $p\in \Col(\omega,\alpha)$ and $q\in \Col(\omega,\On)^M$ such that 
  $\pi_{\alpha+1}(q)\leq_{\PP} p$. Suppose $n\in\dom p\cap\dom q$.
  Since $p\in \Col(\omega,\alpha)$, $p(n)<\alpha$
  and hence $q(n)=\pi_{\alpha+1}(q)(n)=p(n)$. Thus $q\leq_\PP p$. 
The arguments for $\Col_*(\omega,\On)^M$ and $\Col_{\ge}(\omega,\On)^M$ are similar. 
\end{proof}

\begin{theorem}\label{thm:proj forcing thm}
  If $\PP$ is approachable by projections, then the forcing relation for ``$v_0=v_1"$ is definable. Therefore, by Theorem \ref{thm:ft}, $\PP$ satisfies the forcing theorem for every $\mathcal L^n$-formula.
\end{theorem}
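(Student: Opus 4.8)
The plan is to establish the definability of the forcing relation for ``$v_0=v_1$'' over $\MM$ and then invoke Theorem \ref{thm:ft} for the ``therefore'' clause. Write $\PP=\bigcup_{\alpha\in\On^M}\PP_\alpha$ with projections $\pi_{\alpha+1}\colon\PP\to\PP_{\alpha+1}$ as in Definition \ref{appproj}. The starting observation is that each $\PP_{\alpha+1}$ is a notion of set forcing that is uniformly definable over $M$ from the parameter $\langle\PP_\beta\mid\beta\in\On^M\rangle\in\C$; hence, by the forcing theorem for set forcing (which holds in $M\models\ZF^-$), the relation $\{\langle\alpha,q,\sigma,\tau\rangle\mid q\Vdash^M_{\PP_{\alpha+1}}\sigma=\tau\}$ is first-order definable over $\MM$ and so lies in $\C$ by class comprehension.

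The heart of the argument is a reduction of forcing over the class forcing $\PP$ to forcing over the set forcings $\PP_{\alpha+1}$. Fix $\sigma,\tau\in M^\PP$ and let $\alpha$ be least such that $\sigma,\tau\in M^{\PP_\alpha}$; this exists and is definable over $M$, since $\sigma,\tau$ are sets and the union is continuous and increasing with $\bigcup_\beta P_\beta=P$. I claim that for every $p\in\PP$,
\[
 p\Vdash^\MM_\PP\sigma=\tau\quad\Longleftrightarrow\quad\pi_{\alpha+1}(p)\Vdash^M_{\PP_{\alpha+1}}\sigma=\tau.
\]
Once this is proved, definability follows at once: the right-hand side is expressible using the class of the previous paragraph together with the class $\{\langle\alpha,p,\pi_{\alpha+1}(p)\rangle\mid\alpha\in\On^M,\ p\in P\}\in\C$ and the definable assignment $\langle\sigma,\tau\rangle\mapsto\alpha$, so $\{\langle p,\sigma,\tau\rangle\mid p\Vdash^\MM_\PP\sigma=\tau\}\in\C$.

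For the backward direction of the claim, given a $\PP$-generic $G$ with $p\in G$, the filter $G_{\alpha+1}=\pi_{\alpha+1}''G$ is $\PP_{\alpha+1}$-generic and contains $\pi_{\alpha+1}(p)$, so $\sigma^{G_{\alpha+1}}=\tau^{G_{\alpha+1}}$; since $\sigma,\tau$ are $\PP_\alpha$-names, Lemma \ref{lem:proj gen ext} (via property (5)) gives $\sigma^G=\sigma^{G_{\alpha+1}}$ and $\tau^G=\tau^{G_{\alpha+1}}$, whence $\sigma^G=\tau^G$. The forward direction is the main obstacle, and it is a quotient/lifting argument exploiting the countability of $\MM$. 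Given a $\PP_{\alpha+1}$-generic $H$ with $\pi_{\alpha+1}(p)\in H$, I would enumerate the dense subsets of $\PP$ in $\C$ as $\langle D_n\mid n<\omega\rangle$ and build a descending chain $p=p_0\geq_\PP p_1\geq_\PP\cdots$ with $p_n\in D_n$ and $\pi_{\alpha+1}(p_n)\in H$ for all $n$. The inductive step is where properties (2) and (3) enter: the set $A=\{\pi_{\alpha+1}(r)\mid r\leq_\PP p_n,\ r\in D_{n+1}\}$ is a \emph{subset} of the set $P_{\alpha+1}$, hence lies in $M$ by Separation, and property (3) together with density of $D_{n+1}$ and monotonicity (2) shows $A$ is dense below $\pi_{\alpha+1}(p_n)$; as $\pi_{\alpha+1}(p_n)\in H$, genericity of $H$ yields $r^*\in D_{n+1}$ with $r^*\leq_\PP p_n$ and $\pi_{\alpha+1}(r^*)\in H$, which I take as $p_{n+1}$. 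The resulting $G=\{r\in\PP\mid\exists n\ p_n\leq_\PP r\}$ is $\PP$-generic with $p\in G$, and by monotonicity $\pi_{\alpha+1}''G\subseteq H$; since $\pi_{\alpha+1}''G$ is itself $\PP_{\alpha+1}$-generic (by the remark following Definition \ref{appproj}), it equals $H$, so $\sigma^G=\sigma^H=\tau^H=\tau^G$ as before. As $H$ was arbitrary, $\pi_{\alpha+1}(p)\Vdash^M_{\PP_{\alpha+1}}\sigma=\tau$, completing the claim. The ``therefore'' clause is then immediate from Theorem \ref{thm:ft}.
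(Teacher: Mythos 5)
Your proof is correct, and it reaches the same reduction target as the paper --- the set forcings $\PP_{\alpha+1}$ via the projections --- but by a genuinely different route. The paper works \emph{internally and syntactically}: it introduces an auxiliary density-style relation $\Vdash^{*}_\PP$ and its restriction $\Vdash^{*,\alpha+1}_\PP$ to $P_{\alpha+1}$, and proves by a double induction on name ranks, using properties (2)--(5) of Definition \ref{appproj} condition-by-condition, that $p\Vdash^{*}_\PP\sigma\subseteq\tau$ iff $\pi_{\alpha+1}(p)\Vdash^{*,\alpha+1}_\PP\sigma\subseteq\tau$; definability then follows because the restricted relation is a set-forcing-style recursion with parameter $\alpha$. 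You instead prove the \emph{semantic} identity $p\Vdash^\MM_\PP\sigma=\tau\iff\pi_{\alpha+1}(p)\Vdash^M_{\PP_{\alpha+1}}\sigma=\tau$ directly, the nontrivial direction being a lifting argument: using the countability of $\C$ in $\V$, you interleave the dense classes of $\PP$ with the sets $A_n=\{\pi_{\alpha+1}(r)\mid r\leq_\PP p_n,\ r\in D_{n+1}\}$ (which lie in $M$ by Separation with class parameters, a point worth flagging explicitly since it is exactly where $\GB^-$ enters) to produce a $\PP$-generic $G$ with $p\in G$ and $\pi_{\alpha+1}''G=H$ for an \emph{arbitrary prescribed} $\PP_{\alpha+1}$-generic $H$. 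What the paper's approach buys is that the combinatorial core of the argument never needs to construct generic filters, so the definability part is a purely internal induction; what your approach buys is a shorter, more conceptual proof together with a stronger structural byproduct --- every $\PP_{\alpha+1}$-generic filter over $M$ lifts to a $\PP$-generic filter over $\MM$ projecting exactly onto it, which refines Lemma \ref{lem:proj gen ext}. Two cosmetic repairs: arrange the enumeration so that every dense class of $\C$ occurs as some $D_{n+1}$ (your chain only enters $D_{n+1}$ at stage $n+1$, so $p_0\in D_0$ is not guaranteed as written), and note that $A_n$ is dense \emph{below} $\pi_{\alpha+1}(p_n)$ rather than dense, so genericity of $H$ is applied to $A_n\cup\{q\in P_{\alpha+1}\mid q\perp\pi_{\alpha+1}(p_n)\}$; both are standard and harmless.
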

\begin{proof}
Fix any ordinal $\alpha\in M$. We will show by induction on name rank that the forcing relation for ``$v_0\subseteq v_1"$,
restricted to names that only mention conditions in $P_\alpha$, is definable. Then the forcing relation
for $\anf{v_0=v_1}$ is definable by $p\Vdash_\PP\sigma=\tau$ if and only if $p\Vdash_\PP\sigma\subseteq\tau$ and $p\Vdash_\PP\tau\subseteq\sigma$.
It will be easy to see that this is uniform in $\alpha$, thus implying the desired statement. 

For $\sigma,\tau\in M^{\PP_\alpha}$ and $p\in P$ define $p\Vdash_\PP^*\sigma\subseteq\tau$ if and only if 
\begin{align*}
\forall\langle\rho,s\rangle\in\sigma\forall q\leq_\PP p\exists r\leq_\PP q(r\leq_\PP s\ra\exists\langle\pi,t\rangle\in\tau(r\leq_\PP t\wedge r\Vdash_{\PP}^*\rho=\tau)).
\end{align*}
Similiarly, let $\pi_{\alpha+1}(p)\Vdash_\PP^{*,\alpha+1}\sigma\subseteq\tau$ denote the same formula as above but where $p$ is replaced 
by $\pi_{\alpha+1}(p)$, $\Vdash_\PP^*$ is replaced by $\Vdash_\PP^{*,\alpha+1}$ and all quantifiers over $P$ are restricted to $P_{\alpha+1}$. Furthermore, $p\Vdash_\PP^*\sigma=\tau$ is an
abbreviation for $p\Vdash_\PP^*\sigma\subseteq\tau$ and $p\Vdash_\PP^*\tau\subseteq\sigma$ and similarly for $\Vdash_\PP^{*,\alpha+1}$. 

We will show by induction on the rank of $\PP_\alpha$-names that 
\begin{align}
 p\Vdash_\PP^*\sigma\subseteq\tau\quad\text{if and only if}\quad\pi_{\alpha+1}(p)\Vdash_\PP^{*,\alpha+1}\sigma\subseteq\tau.\tag{$\ast$}
\end{align}
The right-hand side is clearly definable with parameter $\alpha$, since $\PP_{\alpha+1}$ is a set forcing. Moreover, 
by the usual proof, $p\Vdash_\PP^*\sigma\subseteq\tau$ if and only if $p\Vdash_\PP\sigma\subseteq\tau$. This shows 
that assuming ($\ast$), the forcing relation for ``$v_0\subseteq v_1"$ restricted to $\PP_\alpha$-names is definable. 

Assume first that $p\Vdash_\PP\sigma\subseteq\tau$ and let $\langle\rho,s\rangle\in\sigma$ and 
$\bar{q}\leq_{\PP_{\alpha+1}}\pi_{\alpha+1}(p)$. By (3) there is 
$q\leq_\PP p$ such that $\pi_{\alpha+1}(q)\leq_\PP\bar{q}$. By assumption there is $r\leq_\PP q$ witnessing 
$p\Vdash_\PP^*\sigma\subseteq\tau$. 
Using (2), $\pi_{\alpha+1}(r)\leq_\PP\bar{q}$. Now if $r\nleq_\PP s$, then $\pi_{\alpha+1}(r)\nleq_\PP s$ by (4).
Otherwise, assume that $\langle\pi,t\rangle\in\tau$
such that $r\leq_\PP t$ and $r\Vdash^*_\PP\rho=\pi$. Again by (2) and (5) we have $\pi_{\alpha+1}(r)\leq_\PP t$ and inductively,
$\pi_{\alpha+1}(r)\Vdash_\PP^{*,\alpha+1}\rho=\pi$. 

For the converse, suppose $\pi_{\alpha+1}(p)\Vdash_\PP^{*,\alpha+1}\sigma\subseteq\tau$ and let $\langle\rho,s\rangle\in\sigma$.
 Let $q\leq_\PP p$. By (2) we have $\pi_{\alpha+1}(q)\leq_\PP\pi_{\alpha+1}(p)$ and 
thus there is $\bar{r}\leq_{\PP_{\alpha+1}}\pi_{\alpha+1}(q)$ witnessing $\pi_{\alpha+1}(p)\Vdash_\PP^{*,\alpha+1}\sigma\subseteq\tau$. 
Using (3), choose $r\leq_\PP q$ such that $\pi_{\alpha+1}(r)\leq_\PP\bar{r}$. 
Now if $\bar{r}\bot_{\PP_{\alpha+1}} s$, then 
also $r\bot_\PP s$ because if $t\leq_\PP r,s$, then $\pi_{\alpha+1}(t)\leq_\PP\bar{r},s$ by (2) and (5). 
So assume that $\bar r$ and $s$ are compatible and take $\bar{u}\in P_{\alpha+1}$ such that $\bar u\leq_\PP\bar r,s$.
Hence again using that $\pi_{\alpha+1}(p)\Vdash_\PP^*\sigma\subseteq\tau$ there is $\bar{v}\leq_{\PP_{\alpha+1}}\bar{u}$ 
such that $\bar v\leq_\PP s$ and there is $\langle\pi,t\rangle\in\tau$ such that $\bar{v}\leq_\PP t$ and 
$\bar{v}\Vdash_\PP^{*,\alpha+1}\rho=\pi$. Now since $\bar{v}\leq_{\PP_{\alpha+1}}\pi_{\alpha+1}(q)$ there exists
$v\leq_\PP q$ such that $\pi_{\alpha+1}(v)\leq_\PP\bar v$, so $\pi_{\alpha+1}(v)\Vdash_\PP^{*,\alpha+1}\rho=\tau$. 
Then by (4) we get that $v\leq_\PP s,t$ and inductively, $v\Vdash_\PP^*\rho=\tau$.
\end{proof}

The following lemma shows that the converse does not hold in general. 

\begin{lemma}Suppose that $M$ is a countable transitive model of $\ZFC$. 
There is a tame notion of class forcing which is not approachable by projections. 
\end{lemma}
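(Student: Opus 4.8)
The plan is to refute approachability by means of Lemma \ref{lem:proj gen ext}. If $\PP$ were approachable by projections, then every set of the generic extension $M[G]$ would lie in one of the set-forcing extensions $M[\pi_{\alpha+1}''G]$, so that $M[G]$ is an increasing union, indexed by $\On^M$, of set-generic extensions of $M$. Consequently it suffices to exhibit a \emph{tame} notion of class forcing $\PP$ for $\MM$ whose generic extension adds a real $r\subseteq\omega$ that is not an element of any set-forcing extension of $M$: for such a $\PP$, were it approachable, we would have $r\in M[\pi_{\alpha+1}''G]$ for some $\alpha\in\On^M$, and the right-hand side is a genuine set-generic extension of $M$ (each $\PP_{\alpha+1}$ is a set forcing and $\pi_{\alpha+1}''G$ is $\PP_{\alpha+1}$-generic), a contradiction. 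Note that tameness guarantees, via the results of the previous sections, that $\PP$ does satisfy the forcing theorem, so the example will indeed show that the converse of Theorem \ref{thm:proj forcing thm} fails.

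First I would explain why the escaping object must be a genuinely new \emph{set}. A tame forcing adding no new sets, such as the standard forcing adding a global well-order by set-sized approximations, has $M[G]=M$, so the conclusion of Lemma \ref{lem:proj gen ext} holds trivially and yields no contradiction; and a merely fresh proper class is invisible to the first-order part $M[G]$. Moreover, tameness rules out the cheapest way of making a real escape set forcing: if $r$ coded, via a well-founded relation on $\omega$, a copy of $\langle M,\in\rangle$, as the Friedman forcing $\FF^M$ does, then since $M[G]$ would be transitive with $\On^{M[G]}=\On^M$ and well-foundedness is absolute downwards, $M[G]$ could form the transitive collapse and obtain a set of ordinal height $\On^M$, which is absurd. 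This is exactly why $\FF^M$ is not tame, and it shows that the escaping real must reconstruct the global structure of $M$ only \emph{class-recursively}, never as a single well-founded code of bounded rank.

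The forcing I would use is therefore a Jensen-style coding of the universe. After a tame (and harmless, indeed approachable) preliminary step forcing $\GCH$, let $\PP$ be the class forcing that codes $M$ into a single real, so that in the extension one has $M[G]=\LL[r]\models\V=\LL[r]$ for a real $r$, with $M\subseteq\LL[r]$ and with the same cardinals and cofinalities. Being definable over $M$, this $\PP$ is a notion of class forcing for $\MM$, and it is pretame and preserves the power set axiom, hence tame. Thus $\PP$ satisfies the forcing theorem while, as I will argue, failing to be approachable by projections.

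The heart of the matter, and the step I expect to be the main obstacle, is to verify that $r$ lies in no set-forcing extension of $M$. Suppose towards a contradiction that $r\in M[g]$ for a $\QQ$-generic filter $g$ over $M$ with $\QQ\in M$ a set forcing. Then $\LL[r]\subseteq M[g]$, so $M[G]=\LL[r]\subseteq M[g]$. But the coding is arranged so that from $r$ one recovers $M\cap\V_\kappa$ for unboundedly many $\kappa\in\On^M$; thus $r$ meets a proper class of dense coding requirements. A set forcing $\QQ$ of size $\theta$ is $\theta^+$-c.c.\ and adds no new subsets to large cardinals of high cofinality, so it cannot produce such a class-generic real — this non-set-genericity of the coding real is precisely the content of Jensen's coding theorem, which I would cite rather than reprove. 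Granting it, the argument of the first paragraph applies verbatim: an approachable decomposition would place $r$ in some set-generic extension $M[\pi_{\alpha+1}''G]$, contradicting that $r$ is in no set-forcing extension of $M$. Hence $\PP$ is tame but not approachable by projections.
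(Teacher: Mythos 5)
Your proposal is correct and takes essentially the same route as the paper: after arranging $\GCH$, force with Jensen coding so that $M[G]=\mathsf{L}[r]$ for a real $r$ lying in no set-forcing extension of $M$, and conclude via Lemma \ref{lem:proj gen ext} that $\PP$ cannot be approachable by projections. Your deferral to the coding theorem for the non-set-genericity of $r$ matches the paper's level of detail, which likewise cites Jensen coding rather than reproving that fact.
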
 
\begin{proof}
Let $\PP$ be Jensen coding, as described in \cite{MR645538}. Then $\PP$ is a tame notion of forcing, i.e.\ it preserves $\ZFC$. Assume that $M\models\GCH$ (otherwise
ensure this by previously forcing $\GCH$).
Then the extension of $M$ is 
$M[G]=\mathsf L[x]$ for some real $x$ which is not contained in any set forcing extension of $M$, so by Lemma
\ref{lem:proj gen ext}, $\PP$ cannot be approachable by projections.  
\end{proof} 

The next lemma shows that approachability by projections is essentially a weakening of being an increasing union of set-sized complete subforcings. 

\begin{lemma}\label{lem:union proj}
  If $\PP=\bigcup_{\alpha\in\On^M}\PP_\alpha$ is an increasing union of set-sized complete subforcings (as witnessed by a class in $\C$), 
then there is a dense embedding $i\in\C$, $i:\PP\ra\BB$ from $\PP$ 
into an $M$-complete Boolean algebra $\BB\in\C$ which is approachable by projections. Moreover, $\PP$ and 
$\BB$ have the same generic extensions, i.e.\ whenever $G$ is $\BB$-generic over $\MM$, then 
$M[G]=M[i^{-1}[G]]$. 
\end{lemma}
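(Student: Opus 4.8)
The plan is to realize $\BB$ as a direct limit of the Boolean completions of the set forcings $\PP_\alpha$. For each $\alpha\in\On^M$ let $\BB_\alpha$ be the Boolean completion (regular open algebra) of the set forcing $\PP_\alpha$, with its canonical dense embedding $e_\alpha\colon\PP_\alpha\to\BB_\alpha$. Since each inclusion $\PP_\alpha\hookrightarrow\PP_\beta$ (for $\alpha\le\beta$) is a complete embedding of set forcings, it extends uniquely to a complete Boolean embedding $j_{\alpha\beta}\colon\BB_\alpha\to\BB_\beta$, and uniqueness yields coherence $j_{\alpha\gamma}=j_{\beta\gamma}\circ j_{\alpha\beta}$. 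Carrying out this recursion so that the copies are genuinely nested, I would set $\BB=\bigcup_{\alpha\in\On^M}\BB_\alpha$ and let $i\colon\PP\to\BB$ send $p$ to $e_\alpha(p)$ for any $\alpha$ with $p\in\PP_\alpha$; coherence makes this well-defined, and density of each $e_\alpha$ together with the fact that every nonzero element of $\BB$ lies in some $\BB_\alpha$ makes $i$ a dense embedding.

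Next I would verify that $\BB$ is an $M$-complete Boolean algebra and that each $\BB_\alpha$ is a complete subalgebra of $\BB$. For the latter, if $A\subseteq\BB_\alpha$ and $b=\sup^{\BB_\alpha}A$, then any upper bound of $A$ in $\BB$ lies in some $\BB_\gamma\supseteq\BB_\alpha$ and, as $j_{\alpha\gamma}$ preserves suprema, bounds $b$ from above; hence $b=\sup^{\BB}A$. For $M$-completeness, given $A\in M$ with $A\subseteq\BB$, Collection in $M$ provides $\beta$ with $A\subseteq\BB_\beta$, so $\sup^{\BB_\beta}A$ exists and, by the previous point, equals $\sup^{\BB}A$.

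To establish approachability by projections I would exhibit a continuous filtration together with projections. The naive filtration $\langle\BB_\alpha\rangle$ fails to be continuous, since at a limit $\lambda$ one typically has $\bigcup_{\alpha<\lambda}\BB_\alpha\subsetneq\BB_\lambda=\mathrm{ro}(\PP_\lambda)$. I would therefore use the filtration $\langle\mathbb D_\alpha\rangle$ given by $\mathbb D_{\alpha+1}=\BB_{\alpha+1}$ at successors and $\mathbb D_\lambda=\bigcup_{\alpha<\lambda}\BB_\alpha$ at limits; this is increasing and continuous, has union $\BB$, and keeps every successor level $\mathbb D_{\alpha+1}=\mathrm{ro}(\PP_{\alpha+1})$ a complete subalgebra of $\BB$. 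As projection I would take the canonical $\pi_{\alpha+1}\colon\BB\to\mathbb D_{\alpha+1}$, $\pi_{\alpha+1}(b)=\prod\{c\in\mathbb D_{\alpha+1}\mid b\le c\}$. Conditions (1), (2) and (5) of Definition \ref{appproj} are immediate, (4) follows from $b\le\pi_{\alpha+1}(b)$, and (3) follows by taking $d=b\wedge c$ (which is nonzero since $0\ne c\le\pi_{\alpha+1}(b)$) and using $\pi_{\alpha+1}(b\wedge c)=\pi_{\alpha+1}(b)\wedge c$ for $c\in\mathbb D_{\alpha+1}$. Finally, for the equality of generic extensions I would argue as for a set forcing and its completion: via density of $i$ every $\BB$-name translates into a $\PP$-name with the same evaluation, giving $M[G]\subseteq M[i^{-1}[G]]$, while $i\in\C$ and the identification of $\PP$-names with $\BB$-names give the reverse inclusion.

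The main obstacle I anticipate is not any single forcing-theoretic step but rather carrying out the whole construction inside $\C$: I must arrange the recursion defining $\langle\BB_\alpha,j_{\alpha\beta}\rangle$, the realization of the direct limit as a genuinely nested union, the filtration $\langle\mathbb D_\alpha\rangle$, and the sequence of projections $\langle\pi_{\alpha+1}\rangle$ so that all of them are classes in $\C$, using only first-order class comprehension together with the hypothesis that $\langle\PP_\alpha\rangle\in\C$. Handling the identifications in the direct limit definably, so that the nested copies and the maps $i$ and $\pi_{\alpha+1}$ are amenable, is the delicate bookkeeping, and it is also the point at which one must ensure that the set-sized completions $\BB_\alpha$ genuinely exist as elements of $M$.
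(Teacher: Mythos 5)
Your construction is essentially the one in the paper: form the direct limit of the regular open algebras $\mathrm{ro}(\PP_\alpha)$ along the complete embeddings induced by the inclusions $\PP_\alpha\subseteq\PP_\beta$, let $i$ be the limit of the canonical dense embeddings $e_\alpha$, and obtain the projections as the canonical maps onto set-sized complete subalgebras. Your two refinements are correct and in fact more careful than the paper's sketch: the filtration $\langle\BB_\alpha\rangle$ is indeed not continuous at limits, and your repair $\mathbb{D}_\lambda=\bigcup_{\alpha<\lambda}\BB_\alpha$ fixes this; moreover your choice $\pi_{\alpha+1}(b)=\prod\{c\in\mathbb{D}_{\alpha+1}\mid b\le c\}$ is the right one, since it guarantees $b\le\pi_{\alpha+1}(b)$, from which conditions (3) and (4) of Definition \ref{appproj} follow exactly as you say.

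There is, however, a genuine gap in the last step. You justify $M[G]\subseteq M[i^{-1}[G]]$ by arguing ``as for a set forcing and its completion: via density of $i$ every $\BB$-name translates into a $\PP$-name with the same evaluation.'' For class forcing this is false as a general principle, and its failure is one of the main points of this paper: the standard translation of a $\BB$-name $\tau$ along a dense embedding, namely $\tau'=\{\langle\sigma',p\rangle\mid\langle\sigma,b\rangle\in\tau,\ p\in\PP,\ i(p)\le_\BB b\}$, is a proper class whenever $\{p\in\PP\mid i(p)\le_\BB b\}$ is a proper class (already for $b=1_\BB$), hence is not an element of $M^\PP$. Concretely, by Lemma \ref{lemma:properties col} and the corollary following it, $\Col_*(\omega,\On)^M$ is a dense suborder of $\Col(\omega,\On)^M$ and yet $M[G\cap\Col_*(\omega,\On)^M]=M\subsetneq M[G]$; so density of $i$ alone can never yield the inclusion you want. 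The correct argument---and what the paper's appeal to ``standard computations using completeness'' refers to---uses the set-sized complete pieces your construction already provides: given a $\BB$-name $\tau\in M$, Collection yields $\gamma$ such that $\tau$ is a $\mathbb{D}_{\gamma+1}$-name; since $\mathbb{D}_{\gamma+1}$ is a \emph{complete} (not merely dense) subforcing of $\BB$, every predense subset of $\mathbb{D}_{\gamma+1}$ lying in $M$ is predense in $\BB$, so $G\cap\mathbb{D}_{\gamma+1}$ is $\mathbb{D}_{\gamma+1}$-generic over $M$ and $\tau^G=\tau^{G\cap\mathbb{D}_{\gamma+1}}$; now the set-forcing dense-embedding argument applies to $i\upharpoonright P_{\gamma+1}\colon\PP_{\gamma+1}\to\mathbb{D}_{\gamma+1}$, giving $\tau^G\in M[i^{-1}[G]\cap P_{\gamma+1}]\subseteq M[i^{-1}[G]]$, where the last inclusion holds because $i^{-1}[G]\cap P_{\gamma+1}$ is the evaluation by $i^{-1}[G]$ of the set-sized name $\{\langle\check{p},p\rangle\mid p\in P_{\gamma+1}\}$ and $\PP_{\gamma+1}$-names evaluate identically under $i^{-1}[G]$ and $i^{-1}[G]\cap P_{\gamma+1}$. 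Alternatively, you can invoke the approachability you just established together with Lemma \ref{lem:proj gen ext}, noting that $\pi_{\gamma+1}''G=G\cap\mathbb{D}_{\gamma+1}$. With this replacement your proposal is complete.
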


\begin{proof}
By passing to separative quotients,
we can assume that every $\PP_\alpha$ is separative and antisymmetric.
Since each $\PP_\alpha$ is a set forcing, it has a Boolean 
completion $\BB(\PP_\alpha)$ given by the set of all regular open subsets of $\PP_\alpha$ and there is a dense
embedding $e_\alpha:\PP_\alpha\ra\BB(\PP_\alpha)$. By defining suitable 
embeddings $i_{\alpha\beta}$ from $\BB(\PP_\alpha)$ into $\BB(\PP_\beta)$ and then forming the quotient 
of $\bigcup_{\alpha\in\On^M}\BB(\PP_\alpha)$ modulo the equivalence relation 
$b_0\sim b_1$ iff $b_1=i_{\alpha\beta}(b_0)$ if $b_0\in\BB(\PP_\alpha)$ and $b_1\in\BB(\PP_\beta)$ and $\alpha<\beta$ or $b_0=i_{\alpha\beta}(b_1)$ if $b_0\in\BB(\PP_\alpha)$ 
and $b_1\in\BB(\PP_\beta)$ and $\alpha\geq\beta$, we obtain a Boolean algebra $\BB=\bigcup_{\alpha\in\On^M}\BB_\alpha$, where
every $\BB_\alpha=\{[b]\mid b\in\BB(\PP_\alpha)\}$ is a complete subforcing of $\BB$. Note that the equivalence classes are, in general, proper classes. We can avoid this problem
by considering \begin{align*}
 [b]=\{c\in\bigcup_{\alpha\in\On}\BB(\PP_\alpha)\mid b\sim c\wedge\exists\alpha\in\On^M\,[c\in\BB(\PP_\alpha)\wedge\neg\exists\beta<\alpha\,\exists d\in\BB(\PP_\beta)\ (d\sim b)]\}.
\end{align*}
Define, for every $\alpha\in\On^M$, the projection $\pi_{\alpha+1}$ by setting
$$\pi_{\alpha+1}([b]) ~= ~\sup\{[c]\in\BB_{\alpha+1}\mid [c]\leq [b]\}$$ for every $[b]\in\BB$.
Straightforward calculations yield those projections to witness that $\BB$ is approachable by projections.
Moreover, $i:\PP\ra\BB, i(p)=[e_\alpha(p)]$ for $p\in P_\alpha$ is a dense embedding. By standard computations using 
that every $\PP_\alpha$ is a complete subforcing of $\PP$ and that the analogous property holds for $\BB_\alpha$ and $\BB$, it holds
that for every $\BB$-generic filter $G$, $M[G]=M[i^{-1}[G]]$ as desired. 
\end{proof}

\begin{lemma}
 There is a notion of class forcing which is an increasing union of set-sized complete subforcings but not pretame. 
\end{lemma}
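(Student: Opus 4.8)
The plan is to exhibit a concrete example. The natural candidate is $\Col_\ge(\omega,\On)^M$ from Definition \ref{def:Col}, since Lemma \ref{lemma:properties col}(5) already establishes that it is the union of $\On^M$-many set-sized complete subforcings $\Col_\ge(\omega,\alpha)$, so half of the work is done. It therefore remains only to verify that this forcing fails to be pretame. Recall that pretameness requires that for every condition $p$ and every sequence $\langle D_i\mid i\in I\rangle\in M$ of dense classes (with $I\in M$), there is $q\le_\PP p$ and a sequence $\langle d_i\mid i\in I\rangle\in M$ of \emph{subsets} $d_i\subseteq D_i$ such that each $d_i$ is predense below $q$. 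The point is that $\Col_\ge(\omega,\On)^M$ secretly collapses $\On$ (in the same spirit as $\Col(\omega,\On)^M$), and this collapsing behaviour is exactly what violates pretameness.

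First I would pin down the generic object. Arguing as in Lemma \ref{lemma:properties col}(1), for each $\alpha\in\On^M$ the set $D_\alpha=\{p\mid \exists n\in\dom p\,[p(n)=\alpha]\}$ is dense (given any condition, one can always extend it by assigning a fresh coordinate the exact value $\alpha$, refining any $\ge\beta$ entries as needed). Hence a generic filter $G$ yields, just as before, a surjection from a subset of $\omega$ onto any fixed ordinal, and collecting these shows that in $M[G]$ there is a single surjection from $\omega$ onto $\On^M$; equivalently every ordinal of $M$ becomes countable. This is the failure-of-Replacement phenomenon that drives the argument.

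Next I would convert this into a direct violation of the pretameness condition. Take $I=\omega$ and the family of dense sets $D_n=\{p\mid n\in\dom p\ \wedge\ p(n)\in\On^M\}$, i.e.\ the requirement that coordinate $n$ receives an actual ordinal value rather than a placeholder $\ge\alpha$. Suppose toward a contradiction that pretameness held, giving $q$ and sets $d_n\subseteq D_n$ in $M$ with each $d_n$ predense below $q$. Since each $d_n\in M$ is a set of conditions, the ordinals appearing as values in conditions of $d_n$ form a set in $M$, and by Collection their supremum over all $n<\omega$ is bounded by some $\gamma\in\On^M$. I would then produce a condition $r\le_\PP q$ that forces some coordinate to take a value $\ge\gamma$ in the strong placeholder sense (using a coordinate outside the relevant domains, assigned the symbol $\ge\gamma$) and is incompatible with every element of $\bigcup_n d_n$, contradicting predensity. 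The verification that such an $r$ is genuinely incompatible with everything in $d_n$ uses the ordering clause: a placeholder $\ge\gamma$ can only be refined to values $\ge\gamma$, so it clashes with any condition assigning a value below $\gamma$ to that coordinate.

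The main obstacle I anticipate is the bookkeeping in the last step: choosing the coordinate and placeholder value for $r$ so that incompatibility with \emph{all} the $d_n$ simultaneously is transparent, and correctly reading off the definition of $\le_{\Col_\ge(\omega,\On)^M}$ to confirm incompatibility (the ordering is subtle because a $\ge\alpha$ entry is compatible with both larger ordinals and larger placeholders). An alternative, cleaner route that sidesteps the fine combinatorics would be to invoke the general theory: pretame forcings preserve $\ZF^-$ (in particular Collection/Replacement for the generic predicate), whereas the surjection of $\omega$ onto $\On^M$ constructed above witnesses a failure of Replacement in $M[G]$; hence the forcing cannot be pretame. I would likely present the Replacement-failure argument as the primary proof, since it is shorter and conceptually matches the collapsing intuition, and it reuses essentially the density computation already carried out for $\Col(\omega,\On)^M$.
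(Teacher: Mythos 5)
Your proposal is correct and, in the form you say you would actually present it, is exactly the paper's proof: take $\Col_\ge(\omega,\On)^M$, cite Lemma \ref{lemma:properties col} (5) for the increasing union of set-sized complete subforcings, and conclude non-pretameness from the fact that the forcing adds a cofinal (indeed surjective) function from $\omega$ onto $\On^M$, so Replacement fails in the extension while pretame forcings preserve $\ZF^-$. One minor slip in your auxiliary combinatorial argument: the condition $r=q\cup\{\langle n^*,{\ge}\gamma\rangle\}$ with $n^*\notin\dom{q}$ need not be incompatible with every element of $\bigcup_n d_n$ (conditions in $d_m$ for $m\ne n^*$ may not mention the coordinate $n^*$ at all), but it is incompatible with every element of $d_{n^*}$, since each such condition assigns an ordinal below $\gamma$ to $n^*$; that alone contradicts predensity of $d_{n^*}$ below $q$, so the direct argument also goes through.
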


\begin{proof}
By Lemma \ref{lemma:properties col}, the forcing notion $\Col_\geq(\omega,\On)^M$ is an increasing union of set-sized
complete subforcings, but it is not pretame since it adds a cofinal function from $\omega$ to the ordinals. 
\end{proof}

If we combine Lemma \ref{lem:union proj} and Theorem \ref{thm:proj forcing thm} with the results from the 
previous section, we obtain that every union of set-sized complete subforcings has a Boolean completion 
and satisfies the forcing theorem for all $\L^n$-formulae. As an application we obtain that 
any $\On^M$-length iteration and product of set forcing notions in $M$ satisfies the forcing theorem for $\L^n$-formulae. 
Note however that the fact that unions of set-sized complete subforcings satisfy the forcing theorem for $\L_\in$-formulae has 
already been proven by Zarach in \cite{MR0345819}.


\section{Failures of the definability lemma}\label{sec:fr not def}

The main goal of this section is to show that for every countable transitive model $M$ of $\ZF^-$, the forcing relation of $\FF^M$ (as defined in Section \ref{examples}) is not first-order definable over $\langle M,\text{Def}(M)\rangle$. 
Furthermore, we will prove that for certain models $M$ of $\ZFC$ its forcing relation is not $M$-amenable. 
Whenever it is clear from context which model is referred to, we write $\FF$ for $\FF^M$.  
Unless stated otherwise, $\MM=\langle M,\C\rangle$ will denote an arbitrary countable transitive model of $\GB^-$. 

Following {\cite[Chapter 3.5]{drake1974set}}, we let $\Fm\subseteq{}^{{<}\omega}\omega$ denote the set of all codes for $\mathcal{L}_\in$-formulae.  
Since we work inside some model $\V$ of set theory and we use these codes inside
countable transitive models that are elements of $\V$ together with the corresponding formalized satisfaction relation, 
we may assume that each  element of $\Fm$ is the G\"odel number $\gbr{\varphi}$ of an $\mathcal{L}_\in$-formula $\varphi$. 
For $k\in\omega$, let $\Fm_k$ denote the set of all G\"odel numbers for formulae with free variables among $\{v_0,\dots,v_{k-1}\}$. For the sake of simplicity, we will assume that every $\L_\in$-formula $\varphi$ is in the following normal form:
Whenever $\exists v_k\psi$ is a subformula of $\varphi$, then the free variables of $\psi$ are among $\{v_0,\dots,v_k\}$. 

\begin{definition} 
 A relation $T\subseteq\Fm_1\times M$ is a \emph{first-order truth predicate for $M$} if $$\langle\gbr{\varphi},x\rangle\in T ~ \Longleftrightarrow ~ \langle M,\in\rangle\models\varphi(x)$$ holds for every $\gbr{\varphi}\in\Fm_1$ and every $x\in M$.  
\end{definition} 


Let $G$ be an $\FF$-generic filter over $\MM$ and let $E$ and $F$ be defined as in the proof of Lemma \ref{lemma:friedman forcing}. Then $$T ~ = ~ \Set{\langle\gbr{\varphi},x\rangle\in\Fm_1\times M}{\langle \omega,E\rangle\models\varphi(F^{{-}1}(x))}\subseteq M$$ is a first-order truth predicate for $\MM$ and, by Tarski's Undefinability Theorem, $T$ cannot be defined over $M$ by a first-order formula. In the following, we will show that definability of the forcing relation for $\FF$ would lead to a first-order definition of $T$. 


\begin{notation}
If $\vec x=x_0,\dots,x_{k-1}$ is a sequence in $M$, we say that a sequence $\vn=n_0,\dots,n_{k-1}$ in $\omega$ is
\emph{appropriate for $\vec x$}, if for all $i,j<k$, $x_i=x_j$ if and only if $n_i=n_j$. We inductively define $p_{\vn}^{\vec x}\in\FF$ as follows, whenever
$\vn$ is a sequence of natural numbers which is appropriate for $\vec x$.
\begin{enumerate-(1)}
 \item If $k=0$, then $p_\emptyset^\emptyset=\one_\FF$.
 \item If $\vn,n_k$ is appropriate for $\vec x,x_k$, given $p=p_{\vn}^{\vec x}$, let
$p_{\vn,n_k}^{\vec x,x_k}$ be the condition $q\in\FF$ with domain $d_q=d_p\cup\{n_k\}$,
$f_q=f_p\cup\{\langle n_k,x_k\rangle\}$ and 
$$e_q=e_p\cup\{\langle n_k,n_i\rangle\mid i\in\vec n\,\land\,x_k\in x_i\}\cup\{\langle n_i,n_k\rangle\mid i\in\vec n\,\land\,x_i\in x_k\}.$$
\end{enumerate-(1)}
Clearly, we obtain that whenever $\vec x$ extends $\vec y$, $\vn$ extends $\vm$ and $\vn$ is appropriate for $\vec x$, then 
$p_{\vn}^{\vec x}\leq_\FF p_{\vm}^{\vec y}$. Furthermore, we define $p^{\vec x}$ to be the condition $p_{\vn}^{\vec x}$,
where $\vn$ is the lexicographically smallest sequence which is appropriate for $\vec x$. 

\end{notation}

Before we proceed to prove that the definability lemma can fail for $\FF$, we need a translation from 
$\L_\in$-formulae to $\L_{\omega_1,0}^{\Vdash}(\FF,M)$-formulae so that we can apply Theorem \ref{thm:compl forcing thm}, where
$\L_{\omega_1,0}^{\Vdash}(\FF,M)$-formulae are $\L_{\On,0}^{\Vdash}(\FF,M)$-formulae in which all conjunctions and disjunctions are countable.

\begin{notation}
Inductively, we assign to every $\L_\in$-formula $\varphi$ with free variables in $\{v_0,\dots,v_{k-1}\}$ and 
all sequences $\vn=n_0,\dots,n_{k-1}$ of natural numbers an $\L_{\omega_1,0}^{\Vdash}(\FF,M)$-formula $\varphi_{\vn}^*$ as follows:
\begin{align*}
 (v_i=v_j)_{\vn}^*&=(\check{n_i}=\check{n_j})\\
 (v_i\in v_j)_{\vn}^*&=(\op(\check{n_i},\check{n_j})\in\dot E)\\
 (\neg\varphi)_{\vn}^*&=(\neg\varphi_{\vn}^*)\\
 (\varphi\vee\psi)_{\vn}^*&=(\varphi_{\vn}^*\vee\psi_{\vn}^*)\\
 (\exists v_k\varphi)_{\vn}^*&=(\bigvee_{i\in\omega}\varphi_{\vn,i}^*).
\end{align*}
If $\vn=0,\dots,k-1$, then we simply write $\varphi^*$ for $\varphi_{\vn}^*$ and if $\vec x$ is a sequence in $M$ and 
$\vn$ is such that $p^{\vec x}=p_{\vn}^{\vec x}$, then we write $\varphi_{\vec x}^*$ for $\varphi_{\vn}^*$. In particular, 
if $v_0$ is the only free variable of $\varphi$, then $\varphi_x^*$ is $\varphi^*$. 
\end{notation}

The next lemma is the key ingredient to obtain a first-order truth predicate $T$ for $\MM$. We will use the 
translation of $\L_\in$-formulae to $\L_{\omega_1,0}^{\Vdash}(\FF,M)$-formulae to define truth by 
$\langle M,\in\rangle\models\varphi(x)$ if and only if $\langle\omega,E\rangle\models\varphi(n)$, where $n=F(x)$, 
if and only if $p_x\Vdash_\FF\varphi_x^*$. 

\begin{lemma}\label{lem:varphi*}
 For every $\L_\in$-formula $\varphi$ with free variables among $\{v_0,\dots,v_{k-1}\}$ and for all 
$\vec x=x_0,\dots,x_{k-1}\in M$, the following conditions hold:
\begin{enumerate-(1)}
 \item $\MM\models\varphi(\vec x)$ if and only if $p^{\vec x}\Vdash^\MM_\FF\varphi_{\vec x}^*$.
 \item $\MM\models\neg\varphi(\vec x)$ if and only if $p^{\vec x}\Vdash^\MM_\FF\neg\varphi_{\vec x}^*$.
\end{enumerate-(1)}
\end{lemma}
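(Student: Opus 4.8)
The plan is to prove Lemma~\ref{lem:varphi*} by induction on the complexity of the $\L_\in$-formula $\varphi$, simultaneously establishing both (1) and (2). The two statements are intertwined because the negation step of (1) will be handled precisely by (2) at a subformula, so I will carry them together through the induction. Throughout, the guiding intuition is the one stated just before the lemma: the condition $p^{\vec x}$ encodes exactly the $\in$-diagram of $\vec x$ onto the natural numbers $\vec n$ witnessing $p^{\vec x}=p^{\vec x}_{\vec n}$, and it forces the translated formula $\varphi^*_{\vec x}$ to reflect the truth of $\varphi(\vec x)$ in $\MM$.

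First I would treat the atomic cases. For $v_i = v_j$, appropriateness of $\vec n$ for $\vec x$ gives $x_i = x_j \Llr n_i = n_j$, and since $\check{n_i}=\check{n_j}$ is forced (or its negation is forced) by $\one_\FF$ according to whether $n_i = n_j$, statement~(1) and~(2) follow immediately. For $v_i \in v_j$, I would invoke Lemma~\ref{lemma:friedman forcing}: recall $E = \dot E^G = \bigcup\{e_p \mid p \in G\}$ and $F = \bigcup\{f_p \mid p \in G\}$ is an isomorphism between $\langle\omega,E\rangle$ and $\langle M,\in\rangle$. The key point is that by construction of $p^{\vec x}$, we have $n_i \mathbin{e_{p^{\vec x}}} n_j$ iff $x_i \in x_j$, and since $p^{\vec x}$ decides $\op(\check{n_i},\check{n_j})\in\dot E$ (either putting the pair into $e$ or leaving the edge permanently absent below $p^{\vec x}$, using acyclicity and condition~(4) of Definition~\ref{def:friedman forcing}), both directions of (1) and (2) hold. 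I would need a small observation that no extension of $p^{\vec x}$ can add the edge $\langle n_i,n_j\rangle$ if it is absent, which follows from clause~(4) together with the fact that $f$ is already defined on $d_{p^{\vec x}}$.

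For the induction step, the propositional connectives $\neg$ and $\vee$ are routine: negation swaps (1) and (2) directly by the definition $(\neg\varphi)^*_{\vec n} = \neg\varphi^*_{\vec n}$, and disjunction uses the standard forcing behaviour of $\vee$ in the infinitary language together with density. The substantive case is the existential quantifier, where $(\exists v_k\varphi)^*_{\vec n} = \bigvee_{i\in\omega}\varphi^*_{\vec n,i}$. Here the forward direction of (1) is easy: if $\MM\models\exists v_k\,\varphi(\vec x,v_k)$, pick a witness $x_k\in M$, choose $n_k$ making $\vec n,n_k$ appropriate for $\vec x,x_k$, apply the inductive hypothesis to get $p^{\vec x,x_k}\Vdash_\FF\varphi^*_{\vec x,x_k}$, and then transfer this along $p^{\vec x,x_k}\leq_\FF p^{\vec x}$ to force the disjunction below $p^{\vec x}$.

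\medskip

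I expect the main obstacle to be the converse direction of the existential case, namely part~(2) for $\exists v_k\varphi$, i.e.\ showing that if $\MM\models\neg\exists v_k\,\varphi(\vec x,v_k)$ then $p^{\vec x}\Vdash_\FF\neg\bigvee_{i\in\omega}\varphi^*_{\vec n,i}$. The difficulty is that a generic filter $G$ with $p^{\vec x}\in G$ can realise a disjunct $\varphi^*_{\vec n,i}$ using an index $i$ whose associated element $F(i)$ need \emph{not} be compatible with the rigid $\in$-diagram frozen by $p^{\vec x}$ in the naive way; one must argue that whenever $p^{\vec x}$ is extended to some $q\in G$ with $i\in\dom f_q$, the element $x_k = F(i) = f_q(i)$ witnesses $\varphi(\vec x,x_k)$ in $\MM$, contradicting $\MM\models\neg\exists v_k\varphi(\vec x,v_k)$. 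To make this precise I would show that for any such $q$ (using density, Lemma~\ref{lemma:density dom}, to assume $\dom f_q = d_q$), the restricted condition recording $\vec x$ together with $f_q(i)$ refines to some $p^{\vec x,x_k}_{\vec n,i}$, so that forcing $\varphi^*_{\vec n,i}$ below $q$ feeds back into the inductive hypothesis~(1) applied to $\varphi(\vec x, x_k)$. The technical care lies in matching the ambient generic-theoretic edges $e_q$ against the canonical diagram of $p^{\vec x,x_k}$ and verifying appropriateness is preserved; this bookkeeping, rather than any conceptual leap, is where the real work sits.
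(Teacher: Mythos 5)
Your overall architecture (simultaneous induction on $\varphi$ for (1) and (2), atomic cases via the frozen diagram of $p^{\vec x}$, existential case as the crux) matches the paper's, and your atomic and propositional cases are fine. But there is a genuine gap in both directions of the existential step, and it has a single root cause: you never establish that the forcing relation is invariant under renaming the numerical indices. In the forward direction of (1) you write that, having obtained $p^{\vec x,x_k}\Vdash_\FF\varphi^*_{\vec x,x_k}$, you will ``transfer this along $p^{\vec x,x_k}\leq_\FF p^{\vec x}$ to force the disjunction below $p^{\vec x}$.'' This step fails: forcing does not propagate from an extension up to a weaker condition. A generic filter $G\ni p^{\vec x}$ need not contain $p^{\vec x,x_k}$ at all; it will contain a condition mapping \emph{some} index $i$ to $x_k$, i.e.\ a condition below $p_{\vn,i}^{\vec x,x_k}$ for an $i$ you do not control, and what you would then need is that $p_{\vn,i}^{\vec x,x_k}\Vdash_\FF\varphi^*_{\vn,i}$. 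Your inductive hypothesis says nothing about this: it concerns only the canonical (lexicographically least) indexing, and $\varphi^*_{\vn,i}$ is literally a \emph{different} infinitary formula from $\varphi^*_{\vec x,x_k}$, since different constants $\check{n}$ occur in it.

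The same mismatch undermines your plan for the hard direction (2): after the bookkeeping that produces $q\leq_\FF p_{\vn,i}^{\vec x,x_k}$ with $x_k=F(i)$, you hold a truth statement in $\MM[G]$ about the $\vn,i$-indexed formula, while the inductive hypothesis hands you a forcing statement about the canonically indexed one; nothing in your setup connects the two, so the claimed contradiction does not materialize. The paper closes exactly this gap by first proving, via the automorphism of $\FF$ that permutes the finitely many indices (possible because the conditions $p_{\vn}^{\vec x}$ have $f$-part defined on all of $d_p$, so the permutation respects the ordering), the equivalence
\begin{equation*}
 p^{\vec x}\Vdash_\FF\varphi_{\vec x}^* \quad\Longleftrightarrow\quad p_{\vn}^{\vec x}\Vdash_\FF\varphi_{\vn}^*
\end{equation*}
for every $\vn$ appropriate for $\vec x$. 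With this in hand, the forward existential direction works by a density argument (every generic containing $p^{\vec x}$ contains some $p_{\vn,i}^{\vec x,x_k}$, and each of these forces its own disjunct), and the negative direction derives its contradiction by transferring $p^{\vec x,y}\Vdash_\FF\neg\varphi^*_{\vec x,y}$ to $p_{\vn,i}^{\vec x,y}\Vdash_\FF\neg\varphi^*_{\vn,i}$. This renaming lemma is a conceptual ingredient (the symmetry of $\FF$), not the ``bookkeeping'' you anticipated; alternatively you could strengthen the induction to quantify over all appropriate index sequences, which amounts to the same thing. A minor further remark: the paper also observes that only the forward implications of (1) and (2) need to be proved, since the backward ones follow formally from the existence of generic filters containing $p^{\vec x}$ — a simplification you could have used to halve the case analysis.
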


\begin{proof}
First, we verify that for every formula
$\varphi$ with free variables among $\{v_0,\dots,v_{k-1}\}$, and for all $\vn\in\omega^k$ appropriate for $\vec x$,
\begin{align}
 p^{\vec x}\Vdash_\FF\varphi_{\vec x}^* ~\Longleftrightarrow ~ p_{\vn}^{\vec x}\Vdash_\FF\varphi_{\vn}^*.\tag{$\ast$}\label{al:p^x}
\end{align}
Let $p^{\vec x}\Vdash_\FF\varphi_{\vec x}^*$ and let $\vec m$ be such that $p^{\vec x}=p_{\vec m}^{\vec x}$. Consider the automorphism $\pi$ on $\FF$ that for every condition
$p=\langle d_p,e_p,f_p\rangle$ replaces every $m_i$ appearing in $d_p,e_p$ and $\dom{f_p}$ by $n_i$. 
Clearly, $\pi(p^{\vec x})=p_{\vn}^{\vec x}$ and so $p_{\vn}^{\vec x}\Vdash_\FF\varphi_{\vn}^*$. The converse 
follows in the same way.

Working in $\V$, we now verify (1) and (2) by induction on the complexity of formulae. 
Observe that it suffices to check only that $\MM\models\varphi(\vec x)$ implies $p^{\vec x}\Vdash_\FF\varphi_{\vec x}^*$ and 
that $\MM\models\lnot\varphi(\vec x)$ implies $p^{\vec x}\Vdash_\FF\lnot\varphi_{\vec x}^*$, since the backwards directions of (1) and (2) immediately follow from the forward directions of (2) and (1) respectively.

For equations this is obvious. Suppose now that $\MM\models x\in y$. 
Let $G$ be generic over $\MM$ with $p^{x,y}\in G$. Then by definition of $p^{x,y}$, $\langle0,1\rangle\in E$ implying 
that $\MM[G]\models((v_0\in v_1)^*)^G$. The converse is similar. 

For negations, both (1) and (2) follow directly from the induction hypothesis.

We turn to disjunctions.
Assume that $\MM\models(\varphi\vee\psi)(\vec x)$. Without loss of generality,
assume that $\MM\models\varphi(\vec x)$. Then inductively, we get that 
$p^{\vec x}\Vdash_\FF\varphi_{\vec x}^*$. But then clearly $p^{\vec x}\Vdash_\FF(\varphi\vee\psi)_{\vec x}^*$.
Conversely, assume that $\MM\models\neg(\varphi\vee\psi)(\vec x)$. This means that 
$\MM\models\neg\varphi(\vec x)$ and $\MM\models\neg\psi(\vec x)$. By assumption, this means
that $p^{\vec x}\Vdash_\FF\neg\varphi_{\vec x}^*\wedge\neg\psi_{\vec x}^*$, hence $p^{\vec x}\Vdash_\FF\neg(\varphi\vee\psi)_{\vec x}^*$.

Suppose now that $\MM\models\exists v_k\varphi(\vec x,v_k)$. Then there 
is $y\in M$ such that $\MM\models\varphi(\vec x,y)$. This means that 
$p^{\vec x,y}\Vdash_\FF\varphi_{\vec x,y}^*$. Let $\vn$ be the sequence such that $p^{\vec x}=p_{\vn}^{\vec x}$. Now observe that by \eqref{al:p^x}, we have for every $i\in\omega$ 
such that $\vn,i$ is appropriate for $\vec x,y$ that
$p_{\vn,i}^{\vec x,y}\Vdash_\FF\varphi_{\vn,i}^*$. Take an $\FF$-generic filter $G$ over $\MM$ with $p^{\vec x}=p_{\vn}^x\in G$. 
By a density argument, there is $i\in\omega$ such that $\vn,i$ is appropriate for $\vec x,y$ and $p_{\vn,i}^{\vec x,y}\in G$. By assumption, this 
implies that $\MM[G]\models(\varphi_{\vn,i}^*)^G$, hence also $\MM[G]\models((\exists v_k\varphi)_{\vec x}^*)^G$. 

Assume now that $\MM\models\neg\exists v_k\varphi(\vec x,v_k)$. Then for every $y\in M$,
$\MM\models\neg\varphi(\vec x,y)$. Let $\vec n$ be the sequence in $\omega^k$ with $p^{\vec x}=p_{\vn}^{\vec x}$. 
We have to show that $p^{\vec{x}}\Vdash\neg\bigvee_{i\in\omega}\varphi_{\vn,i}^*$. Let $G$ be $\FF$-generic over $\MM$
with $p^{\vec x}\in G$ and suppose for a contradiction that $\MM[G]\models(\bigvee_{i\in\omega}\varphi_{\vn,i}^*)^G$.
Then there is $i\in\omega$ such that $\MM[G]\models(\varphi_{\vn,i}^*)^G$. Furthermore, there must 
be some $y\in M$ such that $p_{\vn,i}^{\vec x,y}\in G$. However, since $\MM\models\neg\varphi(\vec x,y)$, 
$p^{\vec x,y}\Vdash_\FF\neg\varphi_{\vec x,y}^*$ and therefore by \eqref{al:p^x}, $p_{\vn,i}^{\vec x,y}\Vdash_\FF\neg\varphi_{\vn,i}^*$ which 
is absurd. 
\end{proof}

For the rest of this section, we will assume, without loss of generality, that whenever $\varphi$ has exactly one free variable $v_i$, then $i=0$. 

\begin{theorem}\label{thm:ft truth pred}
If $\FF$ satisfies the definability lemma for $\anf{v_0\in v_1}$ or for $\anf{v_0=v_1}$ over $\MM$, then $\C$ contains a first-order 
truth predicate for $M$.
\end{theorem}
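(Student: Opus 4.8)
The plan is to combine the definability of the $\FF$-forcing relation, which we extract from the hypothesis, with the correspondence between satisfaction in $\langle M,\in\rangle$ and the forcing relation recorded in Lemma~\ref{lem:varphi*}. First I would upgrade the hypothesis to a statement about infinitary formulae. Assuming the definability lemma for one of $\anf{v_0\in v_1}$ or $\anf{v_0=v_1}$, Theorem~\ref{thm:ft} gives the forcing theorem for all $\L_\in$-formulae, and then the argument establishing the implication from (2) to (3) in Theorem~\ref{thm:compl forcing thm} (which uses only the forcing theorem for equality together with Lemma~\ref{lem:names uft}, and none of the separativity, antisymmetry or representatives choice hypotheses of that theorem) yields that $\FF$ satisfies the uniform forcing theorem for all $\L_{\On,0}^{\Vdash}(\FF,M)$-formulae. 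In particular,
$$U=\{\langle p,\gbr{\psi}\rangle\in P\times\Fm_{\On,0}^{\Vdash}(\FF,M)\mid p\Vdash_\FF^\MM\psi\}\in\C.$$
Every formula $\varphi^*$ produced by the translation is an $\L_{\omega_1,0}^{\Vdash}(\FF,M)$-formula, hence an $\L_{\On,0}^{\Vdash}(\FF,M)$-formula (its only atomic constituents are of the form $\check n_i=\check n_j$ and $\op(\check n_i,\check n_j)\in\dot E$, with $\dot E\in M^\FF$), so $\gbr{\varphi^*}$ lies in the domain of $U$.

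Next I would record that the two auxiliary maps are definable over $\MM$. The assignment $x\mapsto p^x$ is explicit: for a single free variable $v_0$ one has $p^x=\langle\{0\},\emptyset,\{\langle0,x\rangle\}\rangle$, so it is definable over $\langle M,\in\rangle$ and hence belongs to $\C$. The translation $\gbr{\varphi}\mapsto\gbr{\varphi^*}$ is defined by recursion on the complexity of $\varphi$; since $\Fm_1$ is a set in $M$ and each $\gbr{\varphi^*}$ is computed from set-sized data (the existential step forms the countable disjunction $\bigvee_{i\in\omega}\varphi^*_{0,i}$, whose code packages a countable set of subformula codes), this recursion can be carried out inside the $\ZF^-$-model $M$ using Collection. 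Thus the translation is a set function in $M$, and in particular an element of $\C$.

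Then I would simply set
$$T=\{\langle\gbr{\varphi},x\rangle\in\Fm_1\times M\mid p^x\Vdash_\FF^\MM\varphi^*\}=\{\langle\gbr{\varphi},x\rangle\mid\langle p^x,\gbr{\varphi^*}\rangle\in U\}.$$
By first-order class comprehension in $\GB^-$, applied with the class parameter $U$ and the two definable maps, $T\in\C$. For correctness I would invoke Lemma~\ref{lem:varphi*}(1): recalling the convention that $\varphi_x^*=\varphi^*$ when $v_0$ is the only free variable of $\varphi$, and that truth of an $\L_\in$-formula in $\MM$ is determined by the set part $\langle M,\in\rangle$, one obtains $\langle M,\in\rangle\models\varphi(x)$ if and only if $p^x\Vdash_\FF^\MM\varphi^*$, i.e. if and only if $\langle\gbr{\varphi},x\rangle\in T$. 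Hence $T$ is a first-order truth predicate for $M$ lying in $\C$.

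The individual steps are short, so there is no single deep obstacle; the point requiring the most care is the bookkeeping that keeps everything inside $\C$. Concretely, I expect the main thing to verify is that the translation $\gbr{\varphi}\mapsto\gbr{\varphi^*}$ really is $\C$-definable (equivalently, a genuine set map in $M$) despite producing infinitary formulae, and that substituting the two definable maps into the definable infinitary forcing relation $U$ produces a class that first-order comprehension can form. Once this is in place, Lemma~\ref{lem:varphi*} supplies the equivalence that makes $T$ a truth predicate, and the contradiction with Tarski's theorem noted after the definition of $T$ then yields the failure of definability asserted in Theorem~\ref{theorem:FailureDefLemma}.
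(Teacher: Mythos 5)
Your proof is correct and follows essentially the same route as the paper's: derive the uniform forcing theorem for $\L_{\On,0}^{\Vdash}(\FF,M)$-formulae from the definability hypothesis via Theorem \ref{thm:compl forcing thm} (whose implications (1)--(3) indeed require none of the separativity, antisymmetry or representatives choice hypotheses), then apply Lemma \ref{lem:varphi*} to conclude that $T=\{\langle\gbr{\varphi},x\rangle\mid \gbr{\varphi}\in\Fm_1,\ x\in M,\ p^x\Vdash_\FF^\MM\varphi^*\}\in\C$ is a first-order truth predicate for $M$. The extra bookkeeping you supply (the explicit form of $x\mapsto p^x$, the $\C$-definability of the translation $\gbr{\varphi}\mapsto\gbr{\varphi^*}$, and the appeal to first-order class comprehension) is precisely what the paper leaves implicit.
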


\begin{proof}
If the definability lemma holds either for $\anf{v_0\in v_1}$ or for $\anf{v_0=v_1}$, then $\FF$ satisfies the uniform forcing theorem 
for $\L_{\On,0}^{\Vdash}(\FF,M)$-formulae as a consequence of Theorem \ref{thm:compl forcing thm}.
But then by Lemma \ref{lem:varphi*}, 
$$T=\{\langle\gbr{\varphi},x\rangle\mid\gbr{\varphi}\in\Fm_1,x\in M,p^x\Vdash^\MM_\FF\varphi^*\}\in\C$$
is a first-order truth predicate for $M$. 
\end{proof}

\begin{proof}[Proof of Theorem \ref{theorem:FailureDefLemma}]
 Let $M$ be a countable transitive model of $\ZF^-$. Assume, towards a contradiction, that the set $\Set{\langle p,\sigma,\tau\rangle}{p\Vdash^M_{\FF}\anf{\sigma=\tau}}$ is definable over $M$. Then $\MM=\langle M,\mathrm{Def}(M)\rangle$ is a model of $\GB^-$ and $\FF$ satisfies the definability lemma for atomic formulae over $\MM$. By Theorem \ref{thm:ft truth pred}, there is a first-order truth predicate for $M$ that is first-order definable over $M$. This contradicts Tarski's theorem on the undefinability of truth. 
\end{proof}

Theorem \ref{thm:ft truth pred} can also be used to provide an alternative proof of the following well-known fact.

\begin{corollary}
If $\MM$ is a model of $\KM$, then $\C$ contains a first-order truth predicate for $M$. 
\end{corollary}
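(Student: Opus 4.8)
The plan is to obtain the statement as an immediate consequence of two results already established, applied to the Friedman forcing $\FF = \FF^M$ from Section \ref{examples}. First I would check that the standing hypotheses of this section are met. Since every countable transitive model of $\KM$ is a model of $\GB$, and hence of $\GB^-$, the model $\MM = \langle M, \C\rangle$ falls under the blanket assumption of Section \ref{sec:fr not def}, so that all the machinery developed here --- in particular Lemma \ref{lem:varphi*} and Theorem \ref{thm:ft truth pred} --- is available. I would also remark that $\FF^M$ is a genuine notion of class forcing for $\MM$: as noted after Definition \ref{def:friedman forcing}, both its domain and its ordering are first-order definable over $\langle M, \in\rangle$, so (second-order, indeed already first-order) comprehension in $\KM$ yields $\FF^M \in \C$.

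Next I would invoke Corollary \ref{thm:KM bc}, which states that in any model of $\KM$ every notion of class forcing satisfies the forcing theorem for all $\L^n$-formulae over $\MM$. Applying this with $\PP = \FF$, I conclude in particular that $\FF$ satisfies the definability lemma for the atomic formula $\anf{v_0=v_1}$ over $\MM$.

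Finally, I would feed this into Theorem \ref{thm:ft truth pred}: since $\FF$ satisfies the definability lemma for $\anf{v_0=v_1}$, that theorem produces a first-order truth predicate $T \in \C$ for $M$, which is exactly the assertion of the corollary.

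There is essentially no hard step; the corollary is just the packaging of the chain Corollary \ref{thm:KM bc} $\Rightarrow$ definability lemma for $\FF$ $\Rightarrow$ Theorem \ref{thm:ft truth pred}. The only points requiring (minimal) care are the verifications that a $\KM$ model is a $\GB^-$ model and that $\FF^M \in \C$, both of which follow directly from the axioms of $\KM$.
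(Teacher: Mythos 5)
Your proposal is correct and is essentially identical to the paper's own proof, which likewise applies Corollary \ref{thm:KM bc} to obtain the definability lemma for $\FF$ over $\MM$ and then concludes via Theorem \ref{thm:ft truth pred}. The extra verifications you include (that a $\KM$-model is a $\GB^-$-model and that $\FF^M\in\C$) are correct routine checks that the paper leaves implicit.
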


\begin{proof}
By Theorem \ref{thm:KM bc}, $\FF$ satisfies the definability lemma for all $\mathcal{L}^n$-formulae over $\MM$, so by Theorem \ref{thm:ft truth pred}, $\C$ contains a first-order truth predicate for $M$. 
\end{proof}

We can even do better and find fixed names $\nu$ and $\mu\in M^\FF$ such that the forcing theorem for
$\nu=\mu$ implies the existence of a first-order truth predicate.

\begin{lemma}\label{lem:names truth pred} 
 There exist $\mu,\nu\in M^\FF$ and $\{\langle\gbr{\varphi},q_\gbr{\varphi}\rangle\mid\gbr{\varphi}\in\Fm_1\}\in M$ such that
\begin{enumerate-(1)}
 \item If $\varphi$ has one free variable and $x\in M$, then $M\models\varphi(x)$ iff for all $r\leq_\FF p^x,q_\gbr{\varphi}$, $r\Vdash_\FF\mu=\nu$.
 \item If $\varphi$ is a sentence, then $M\models\varphi$ iff $q_\gbr{\varphi}\Vdash_\FF\mu=\nu$. 
\end{enumerate-(1)}
\end{lemma}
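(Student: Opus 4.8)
The plan is to build the fixed pair $\mu,\nu$ by combining the two translation results already available: Lemma \ref{lem:varphi*}, which reduces first-order truth to forcing of the infinitary formulae $\varphi^*$, and Lemma \ref{lem:names uft}, which assigns to each infinitary formula a pair of names whose forced equality is equivalent to that formula. The obstruction is that the names from Lemma \ref{lem:names uft} depend on the formula, so I must glue them into a single pair, pushing the dependence on $\varphi$ into the conditions $q_{\gbr{\varphi}}$. Recall that by Lemma \ref{lem:varphi*}, for $\varphi$ with at most $v_0$ free and $x\in M$ we have $M\models\varphi(x)$ iff $p^x\Vdash_\FF\varphi^*$ (since $\varphi_x^*=\varphi^*$) and $M\models\neg\varphi(x)$ iff $p^x\Vdash_\FF\neg\varphi^*$, while for a sentence $\varphi$ the condition $\one_\FF=p^\emptyset$ already decides $\varphi^*$. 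Hence for every generic $G$ containing $p^x$ one has $M[G]\models(\varphi^*)^G$ iff $M\models\varphi(x)$, and similarly for sentences with $\one_\FF$ in place of $p^x$.

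First I would fix the conditions. Regarding $\gbr{\varphi}\in\Fm_1$ as a natural number, set
\[ q_{\gbr{\varphi}}=\langle\{1,2,\dots,\gbr{\varphi}+2\},\{\langle\gbr{\varphi}+2,1\rangle\},\emptyset\rangle\in\FF, \]
so that $\{\langle\gbr{\varphi},q_{\gbr{\varphi}}\rangle\mid\gbr{\varphi}\in\Fm_1\}\in M$. Two opposing properties are needed, and meeting both simultaneously is the crux of the argument. For \emph{compatibility} with every $p^x$: since $0\notin\dom{q_{\gbr{\varphi}}}$ and $q_{\gbr{\varphi}}$ has empty third coordinate, the realization technique of Lemma \ref{lemma:density dom} lets me extend $q_{\gbr{\varphi}}$ to a full-domain condition whose values all differ from $x$, and then adjoin the coordinate $0$ with value $x$ (recording the true $\in$-relations between $x$ and the chosen values); this is a common extension of $q_{\gbr{\varphi}}$ and $p^x$. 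For the \emph{antichain} property: if $\gbr{\varphi}<\gbr{\psi}$ the domains are nested, and restricting $e_{q_{\gbr{\psi}}}=\{\langle\gbr{\psi}+2,1\rangle\}$ to $(\dom{q_{\gbr{\varphi}}})^2$ gives $\emptyset\neq\{\langle\gbr{\varphi}+2,1\rangle\}=e_{q_{\gbr{\varphi}}}$, so the two conditions disagree on $(\dom{q_{\gbr{\varphi}}})^2$ and are incompatible. Consequently at most one $q_{\gbr{\varphi}}$ lies in any generic filter; the empty third coordinate is exactly what prevents fixing a value that could collide with some $p^x$, while the nested domains force restriction to destroy the distinguishing edge.

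Next I would form the single $\L_{\omega_1,0}^{\Vdash}(\FF,M)$-formula
\[ \Psi=\bigvee_{\gbr{\varphi}\in\Fm_1}\bigl(\check{q_{\gbr{\varphi}}}\in\dot G\wedge\varphi^*\bigr), \]
whose G\"odel code lies in $\Fm_{\On,0}^{\Vdash}(\FF,M)$ because $\Fm_1\in M$ and both maps $\gbr{\varphi}\mapsto q_{\gbr{\varphi}}$ and $\gbr{\varphi}\mapsto\gbr{\varphi^*}$ are in $M$. Applying Lemma \ref{lem:names uft} to $\Psi$ yields names $\mu=\mu_{\gbr{\Psi}}$ and $\nu=\nu_{\gbr{\Psi}}$ in $M^\FF$ with $\one_\FF\Vdash_\FF(\Psi\lr\mu=\nu)$, that is $\mu^G=\nu^G$ iff $M[G]\models\Psi^G$ for every generic $G$. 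Because the $q_{\gbr{\varphi}}$ form an antichain, whenever $q_{\gbr{\varphi}}\in G$ the truth value of $\Psi^G$ reduces to that of $M[G]\models(\varphi^*)^G$, since no other disjunct can be active.

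Finally I would verify the two clauses. For (1), fix $\varphi$ with free variable $v_0$ and $x\in M$. If $M\models\varphi(x)$ then $p^x\Vdash_\FF\varphi^*$, so any $r\leq_\FF p^x,q_{\gbr{\varphi}}$ lies below both conditions, every generic $G\ni r$ contains $p^x,q_{\gbr{\varphi}}$ and satisfies $\Psi^G$, whence $r\Vdash_\FF\mu=\nu$. If $M\models\neg\varphi(x)$, then using compatibility choose $r\leq_\FF p^x,q_{\gbr{\varphi}}$ and a generic $G\ni r$; since $p^x\in G$ we get $M[G]\models\neg(\varphi^*)^G$, and $q_{\gbr{\varphi}}$ is the unique $q$ in $G$, so $\Psi^G$ fails, $\mu^G\neq\nu^G$, and thus $r\nVdash_\FF\mu=\nu$, so the universal statement in (1) fails. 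For (2), for a sentence $\varphi$ the condition $\one_\FF$ decides $\varphi^*$, so for every generic $G\ni q_{\gbr{\varphi}}$ we have $\Psi^G$ iff $M[G]\models(\varphi^*)^G$ iff $M\models\varphi$; hence $q_{\gbr{\varphi}}\Vdash_\FF\mu=\nu$ iff $M\models\varphi$. The main obstacle, as indicated above, is the construction of the conditions $q_{\gbr{\varphi}}$: they must be pairwise incompatible (so that $\Psi$ isolates a single formula) yet each compatible with every $p^x$ (so that the right-hand side of (1) is never vacuously true), and reconciling these is precisely what the empty-$f$, nested-domain conditions achieve.
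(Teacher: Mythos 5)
Your proof is correct, but it takes a genuinely different route from the paper's at two points. The paper constructs $\mu,\nu$ by hand-mixing the names from Lemma \ref{lem:names uft} along the antichain: it fixes a bijection $k:\omega\ra\Fm_1$, takes conditions $q^n$ (essentially your $q_{\gbr{\varphi}}$: same antichain trick, $0\notin\dom{q^n}$, empty $f$-part), and sets $\nu=\{\langle\tau,q^n\rangle\mid\langle\tau,\one_\FF\rangle\in\nu_{j(n)}\}$ and similarly for $\mu$, so that $q^n\Vdash_\FF\nu=\nu_{j(n)}\wedge\mu=\mu_{j(n)}$. This mixing relies on a structural fact visible only in the \emph{proof} of Lemma \ref{lem:names uft} (for non-atomic formulae all pairs in $\nu_{\gbr{\varphi^*}},\mu_{\gbr{\varphi^*}}$ have the form $\langle\tau,\one_\FF\rangle$), which is why the paper must also dispose of atomic formulae separately. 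You instead internalize the case distinction into a single infinitary formula $\Psi=\bigvee_{\gbr{\varphi}\in\Fm_1}(\check{q}_{\gbr{\varphi}}\in\dot G\wedge\varphi^*)$ and invoke Lemma \ref{lem:names uft} once, as a black box, on $\Psi$; the antichain property then makes $\Psi$ equivalent to $\varphi^*$ below $q_{\gbr{\varphi}}$. This is legitimate (the code of $\Psi$ lies in $M$ for the reasons you give, and the paper itself forms such disjunctions over $\Fm_1$ in its Theorem \ref{thm:failure tl}), and it buys you modularity: no inspection of the proof of Lemma \ref{lem:names uft}, no special treatment of atomic formulae. Second, for the backward direction of (1) the paper applies the forward direction to $\neg\varphi$ and transfers the resulting condition via an automorphism of $\FF$ swapping the codes of $\varphi$ and $\neg\varphi$, whereas you argue directly: any $r\leq_\FF p^x,q_{\gbr{\varphi}}$ and any generic $G\ni r$ falsify $\Psi^G$, so in fact every such $r$ forces $\mu\neq\nu$. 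Your direct argument is simpler and shows slightly more; both hinge on the same crucial point you flag, namely that $q_{\gbr{\varphi}}$ is compatible with every $p^x$, so the universal statement in (1) never holds vacuously. (Only cosmetic caveats: when realizing $q_{\gbr{\varphi}}$ with a full-domain condition avoiding the value $x$, one should note that the tags $\{\emptyset,j\}$ of Lemma \ref{lemma:density dom} can be replaced by tags of rank above $\rank(x)$; and your identification of $\gbr{\varphi}$ with a natural number is the same harmless step as the paper's bijection $k$.)
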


\begin{proof}Since the proof of (2) is a simplified version of the proof of (1), we only verify (1). 
Choose an antichain $\{q^n\mid n\in\omega\}\subseteq\FF$ such that for every
$n\in\omega$, $0\notin\dom{q^n}$, e.g. take 
$$q^n=\langle\{1,\dots,n+1\},\{\langle1,n+1\rangle\},\emptyset\rangle.$$ 
Consider the names $\nu_\gbr{\varphi^*},\mu_\gbr{\varphi^*}$ as defined in Lemma \ref{lem:names uft}. 
We will only consider non-atomic formulae, since all atomic formulae with at most one free variable 
are either tautologically true or false. The proof of Lemma \ref{lem:names uft}
shows that for $\gbr{\varphi}\in\Fm_1$ with $\varphi$ non-atomic, all elements of $\nu_\gbr{\varphi^*},\mu_\gbr{\varphi^*}$ are of the form $\langle\tau,\one_\FF\rangle$ 
for some $\tau\in M^\FF$. Let $k:\omega\ra\Fm_1$ be a bijection and let $j:\omega\ra\Fm_{\omega_1,0}^{\Vdash}(\FF,M)$ be
given by $j(n)=\gbr{\varphi^*}$, where $k(n)=\gbr{\varphi}$. Now set 
\begin{align*}
 \nu&=\{\langle\tau,q^n\rangle\mid\langle\tau,\one_\FF\rangle\in\nu_{j(n)}\}\\
 \mu&=\{\langle\tau,q^n\rangle\mid\langle\tau,\one_\FF\rangle\in\mu_{j(n)}\}.
\end{align*}
This yields that $q^n\Vdash_\FF\nu=\nu_{j(n)}$ and $q^n\Vdash_\FF\mu=\mu_{j(n)}$ for each $n\in\omega$. Moreover, since $0\notin\dom{q^n}$, 
$p_0^x$ and $q^n$ are compatible for every $x\in M$ and $n\in\omega$. For $\gbr{\varphi}\in\Fm_1$, we put 
$$q_\gbr{\varphi}=q^{k^{-1}(\gbr{\varphi})}.$$
To check (1), suppose first that $M\models\varphi(x)$ for some $\L_\in$-formula $\varphi$ and $x\in M$. 
Let $r\in\FF$ be such that $r\leq_\FF p^x,q_\gbr{\varphi}$. Since $r\leq_\FF q_\gbr{\varphi}$, 
$r\Vdash_\FF\nu=\nu_\gbr{\varphi^*}\wedge\mu=\mu_\gbr{\varphi^*}$. On the other hand, since $r\leq_\FF p^x$,
Lemma \ref{lem:varphi*} implies that $r\Vdash_\FF\varphi^*$, i.e. by Lemma \ref{lem:names uft}, $r\Vdash_\FF\nu=\mu$. 
Conversely, assume that $M\models\neg\varphi(x)$. By (1) applied to the negation of $\varphi$, we have 
that for all $r\leq_\FF p^x,q_\gbr{\neg\varphi}$, $r\Vdash_\FF\mu=\nu$. Since $p^x$ and $q_\gbr{\neg\varphi}$ are 
compatible, such $r$ exists. Now let $\pi$ be the automorphism on $\FF$ which for $p=\langle d_p,e_p,f_p\rangle$
swaps all occurrences of $k^{-1}(\gbr{\neg\varphi})$ and $k^{-1}(\gbr{\varphi})$ in $d_p,e_p$ and $\dom{f_p}$. 
Then $\pi(q_\gbr{\neg\varphi})\leq_\FF q_\gbr{\varphi}$ and $\pi(p^x)=p^x$. 
In particular, $\pi(r)\leq_\FF p^x,q_\gbr{\varphi}$ and so $\pi(r)\Vdash_\FF\nu=\nu_\gbr{\varphi^*}\wedge\mu=\mu_\gbr{\varphi^*}$.
Moreover, since $\pi(r)\leq_\FF p^x$, by Lemma \ref{lem:varphi*} we obtain $\pi(r)\Vdash_\FF\neg\varphi^*$. Finally,\
by Lemma \ref{lem:names uft}, this proves that $\pi(r)\Vdash_\FF\mu_\gbr{\varphi^*}\neq\nu_\gbr{\varphi^*}$ and so
$\pi(r)\Vdash_\FF\mu\neq\nu$. 
\end{proof}

\begin{corollary} 
 There exist $\nu,\mu\in M^\FF$ such that if $\{p\in\FF\mid p\Vdash_\FF\mu=\nu\}\in\C$, then $\C$ contains 
a first-order truth predicate for $M$. In particular, $\{p\in\FF\mid p\Vdash_\FF\mu=\nu\}$ is not definable over $M$. \qed
\end{corollary}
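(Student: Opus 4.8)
The plan is to read the statement off directly from Lemma~\ref{lem:names truth pred}, which has already done all the substantive work. I would take $\mu,\nu\in M^\FF$ together with the set $\{\langle\gbr{\varphi},q_\gbr{\varphi}\rangle\mid\gbr{\varphi}\in\Fm_1\}\in M$ provided by that lemma, and offer precisely these $\mu,\nu$ as the witnesses. Assuming $\{p\in\FF\mid p\Vdash_\FF\mu=\nu\}\in\C$, I would then define
\[
T=\Set{\langle\gbr{\varphi},x\rangle\in\Fm_1\times M}{\forall r\in\FF\,[(r\leq_\FF p^x\wedge r\leq_\FF q_\gbr{\varphi})\to r\Vdash_\FF\mu=\nu]}
\]
and claim that $T$ is a first-order truth predicate for $M$ lying in $\C$. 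That $T$ computes truth correctly is immediate from clause~(1) of Lemma~\ref{lem:names truth pred}: for $\gbr{\varphi}\in\Fm_1$ and $x\in M$ one has $\langle\gbr{\varphi},x\rangle\in T$ if and only if $M\models\varphi(x)$.

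The only point requiring care is the membership $T\in\C$, which I would obtain from first-order class comprehension in $\GB^-$. For this I would verify that every parameter entering the defining formula is available in $\C$: the set $\Fm_1$ and the assignment $\gbr{\varphi}\mapsto q_\gbr{\varphi}$ are elements of $M\subseteq\C$; the map $x\mapsto p^x$ is definable over $\langle M,\in\rangle$ (indeed, for a single free variable $p^x=\langle\{0\},\emptyset,\{\langle0,x\rangle\}\rangle$, so it is given by a simple term in $x$); the ordering $\leq_\FF$ lies in $\C$ since $\FF$ is a notion of class forcing; and $\{p\in\FF\mid p\Vdash_\FF\mu=\nu\}\in\C$ by hypothesis. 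Since the variable $r$, although ranging over the class $\FF$, is itself a set variable, the quantifier $\forall r\in\FF$ is a set quantifier with class parameters, so the defining formula is admitted by first-order comprehension and hence $T\in\C$.

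For the \anf{in particular} clause I would argue by contraposition, specialising to $\C=\mathrm{Def}(M)$. By the Example in Section~\ref{section:general setting}, $\langle M,\mathrm{Def}(M)\rangle$ is a model of $\GB^-$. If $\{p\in\FF\mid p\Vdash_\FF\mu=\nu\}$ were definable over $\langle M,\in\rangle$, it would be an element of $\mathrm{Def}(M)$, so the first part would place a first-order truth predicate for $M$ into $\mathrm{Def}(M)$, i.e.\ it would yield a truth predicate definable over $\langle M,\in\rangle$, contradicting Tarski's theorem on the undefinability of truth. There is no genuine obstacle here, as all the content has been front-loaded into Lemma~\ref{lem:names truth pred}; the only mildly delicate step is the bookkeeping that confirms the defining formula for $T$ uses only parameters from $\C$ and only set quantifiers, so that comprehension genuinely applies.
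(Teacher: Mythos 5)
Your proposal is correct and coincides with the paper's intended argument: the corollary carries no proof precisely because it is meant to follow immediately from Lemma \ref{lem:names truth pred}, by defining the truth predicate $T$ via first-order class comprehension from the hypothesised class $\{p\in\FF\mid p\Vdash_\FF\mu=\nu\}$ together with the $M$-parameters $\{\langle\gbr{\varphi},q_\gbr{\varphi}\rangle\mid\gbr{\varphi}\in\Fm_1\}$ and the definable map $x\mapsto p^x$, and then passing to $\C=\mathrm{Def}(M)$ and invoking Tarski, exactly as in the paper's proof of Theorem \ref{theorem:FailureDefLemma}. Your bookkeeping of the parameters and set quantifiers in the comprehension step is the only substantive point, and you handle it correctly.
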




In the remainder of this section, we show that amenability of the forcing relation for the forcing $\FF$ can 
consistently fail. We will work with a countable, transitive Paris model $M\models\ZF^-$ (see Section \ref{section:Intro}).
Note that the least $\alpha$ such that $L_\alpha\models\ZF^-$ is such a model.

\begin{lemma}\label{lem:fr not amenable}
 Let $M$ be a countable transitive Paris model with $M\models\ZF^-$. Then 
$$X=\{q_\gbr{\varphi}\mid\gbr{\varphi}\in\Fm_0,q_\gbr{\varphi}\Vdash_\FF\mu=\nu\}$$
is not an element of $M$, where $q_\gbr{\varphi},\mu,\nu$ are as in Lemma \ref{lem:names truth pred}.
\end{lemma}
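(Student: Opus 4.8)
The plan is to reduce the statement to the assertion that the set of true sentences $T_0=\{\gbr{\varphi}\in\Fm_0\mid M\models\varphi\}$ is not an element of $M$, and then to derive a contradiction from $T_0\in M$ using the Paris property together with Replacement. First I would observe that, by clause~(2) of Lemma~\ref{lem:names truth pred}, for a sentence $\varphi$ we have $q_\gbr{\varphi}\Vdash_\FF\mu=\nu$ if and only if $M\models\varphi$. Since the assignment $\gbr{\varphi}\mapsto q_\gbr{\varphi}$ is an injection lying in $M$ (by the conclusion of Lemma~\ref{lem:names truth pred}) and $\Fm_0\in M$, it follows that $T_0=\{\gbr{\varphi}\in\Fm_0\mid q_\gbr{\varphi}\in X\}$. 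Hence, if $X$ were an element of $M$, then $T_0$ would be an element of $M$ by Separation, and it suffices to refute this.

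So suppose towards a contradiction that $a:=T_0\in M$. Working inside $M$ with $a$ as a parameter, I would define the set $D$ of all $\gbr{\theta}\in\Fm_1$ such that $\theta$ defines an ordinal, i.e.\ such that the sentence $\exists!\,v_0\,(\mathrm{ord}(v_0)\wedge\theta(v_0))$ holds in $M$, where $\mathrm{ord}(v_0)$ expresses that $v_0$ is an ordinal. The point is that membership in $D$ is decided by $a$: it amounts to asking whether the (effectively computed) G\"odel code of that sentence lies in $a$. Thus $D\subseteq\Fm_1\subseteq\omega$ is definable over $M$ from $a$, so $D\in M$. In the same way I would define, from $a$, a relation $\prec$ on $D$ by letting $\gbr{\theta}\prec\gbr{\theta'}$ hold exactly when the sentence asserting that the ordinal defined by $\theta$ is a member of the ordinal defined by $\theta'$ belongs to $a$. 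Then $\prec$ is a set-like pre-wellordering of $D$ whose quotient is order-isomorphic to $\langle\On^M,\in\rangle$; well-foundedness inside $M$ follows since a $\prec$-descending sequence, or a $\prec$-subset without a minimal element, would yield via Replacement a correspondingly ill-founded set of ordinals.

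The key step --- and the place where the Paris hypothesis is essential --- is that the order-type map $g\colon D\to\On^M$ induced by $\prec$, which is definable over $M$ from $a$ by recursion along $\prec$, is \emph{surjective}: given any $\beta\in\On^M$, the Paris property provides a parameter-free formula $\theta$ defining $\beta$, so that $\gbr{\theta}\in D$ and $g(\gbr{\theta})=\beta$. Consequently $g$ is a class function of $M$, definable from the set parameter $a$, whose domain $D$ is a set and whose range is all of $\On^M$; by Replacement (which holds in $M$ since $\ZF^-$ includes Collection) the range $g[D]=\On^M$ would then be a set in $M$, contradicting that the ordinals do not form a set. This shows $T_0\notin M$, and hence $X\notin M$. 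I expect the main obstacle to be verifying cleanly that $D$, $\prec$ and $g$ are genuinely definable over $M$ using only the sentence-truth information encoded by $a$ --- so that no full satisfaction predicate with arbitrary parameters is secretly required --- together with checking the well-foundedness of $\prec$ inside $M$; once surjectivity onto $\On^M$ is secured, the contradiction with Replacement is immediate.
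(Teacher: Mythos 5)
Your proof is correct and is essentially the paper's own argument: the paper likewise uses $X$ as a truth oracle for sentences (via clause (2) of Lemma \ref{lem:names truth pred}) to define, inside $M$, the set of codes of ordinal-defining formulae together with the induced comparison ordering, and then derives a contradiction from the Paris property because this ordering on a set in $M$ has order type $\On^M$. Your version merely factors explicitly through the truth set $T_0$ and spells out the final contradiction (the rank function of the prewellordering is definable, surjective onto $\On^M$, and Replacement then makes $\On^M$ a set) that the paper leaves implicit in the phrase ``has order type $\On^M$, a contradiction.''
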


\begin{proof}
Suppose for a contradiction that $X\in M$. Observe that for every $\L_\in$-sentence,
\begin{equation}\label{al:x}
 M\models\varphi ~ \Longleftrightarrow ~ q_\gbr{\varphi}\in X. 
\end{equation}
Consider $C=\{\gbr{\varphi}\mid q_\gbr{\exists!x\in\On\,\varphi(x)}\in X\}.$
Since $X\in M$, so is $C$. Observe that we can order the elements of $C$ by
$$\gbr{\varphi}<\gbr{\psi} ~\Llr ~q_\gbr{\exists x,y\in\On[x<y\wedge\varphi(x)\wedge\psi(y)]}\in X.$$
As a consequence of \eqref{al:x}, we know that $\langle C,<\rangle$ has order type $\On^M$, a contradiction.
\end{proof}

In particular, this shows Theorem \ref{theorem:FailureAmenability}.

\section{A failure of the truth lemma}\label{sec:ftl}
In this section, we show that the truth lemma can consistently fail for class forcing. 
Note that by Lemma \ref{lem:names uft}, if we find a notion of class forcing and an infinitary formula for which the truth lemma fails, 
then we automatically obtain that it fails for $\anf{v_0=v_1}$. 

We need the following basic result about two-step iterations of class forcing.

\begin{lemma}[\cite{MR1780138}, Lemma 2.30 (a)]\label{lem:2step}
 Let $\MM=\langle M,\C\rangle$ be a countable transitive model of $\GB^-$, let $\PP$ be a tame notion of class forcing and let $\dot{\QQ}\in\C^\PP$ be a class name for a preorder. Then we define the 
two-step iteration of $\PP$ and $\dot{\QQ}$ by 
$$\PP\ast\dot{\QQ}=\{\langle p,\dot{q}\rangle\mid p\in\PP\wedge p\Vdash_\PP\dot{q}\in\dot{\QQ}\}$$
equipped with the ordering given by 
$\langle p_0,\dot{q_0}\rangle\leq_{\PP\ast\dot{\QQ}}\langle p_1,\dot{q_1}\rangle$ iff $p_0\leq_\PP p_1$ and $p_0\Vdash_\PP \dot{q_0}\leq_{\dot{\QQ}}\dot{q_1}$.
If $G$ is $\PP$-generic and $H$ is $\dot{\QQ}^G$-generic over $M[G]$, then 
$G\ast H=\{\langle p,\dot{q}\rangle\mid p\in G\wedge\dot{q}^G\in H\}$ is $\PP\ast\dot{\QQ}$-generic over $M$. 
\end{lemma}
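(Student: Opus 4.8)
The plan is to reduce everything to the forcing theorem for $\PP$ and then to split the argument into checking that $G\ast H$ is a filter and that it meets every dense class. Since $\PP$ is tame it is in particular pretame, so by \cite[Theorem 2.18]{MR1780138} it satisfies the definability lemma for atomic formulae; hence, by Theorem \ref{thm:ft}, $\PP$ satisfies the full forcing theorem for all $\L^n$-formulae over $\MM$, in particular for the statements $\dot q\in\dot\QQ$ and $\dot q\leq_{\dot\QQ}\dot q'$, which are instances of $\L^1$-formulae with the class names $\dot\QQ$ and $\leq_{\dot\QQ}$ supplied as predicate parameters. This gives the two translations used throughout: if $p\in G$ and $p\Vdash_\PP\varphi$, then $M[G]\models\varphi^G$; conversely, if $M[G]\models\varphi^G$, then some $p\in G$ forces $\varphi$; and, crucially, any set cut out by a forcing statement lies in $\C$.

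First I would verify that $G\ast H$ is a filter on $\PP\ast\dot\QQ$. For upward closure, suppose $\langle p,\dot q\rangle\in G\ast H$ and $\langle p,\dot q\rangle\leq_{\PP\ast\dot\QQ}\langle p',\dot q'\rangle$. Then $p\leq_\PP p'$ forces $p'\in G$, and from $p\Vdash_\PP\dot q\leq_{\dot\QQ}\dot q'$ together with $p\in G$ the forcing theorem yields $\dot q^G\leq_{\dot\QQ^G}\dot q'^G$ in $M[G]$; as $H$ is a filter containing $\dot q^G$, we get $\dot q'^G\in H$, so $\langle p',\dot q'\rangle\in G\ast H$. For downward directedness, given $\langle p_0,\dot q_0\rangle,\langle p_1,\dot q_1\rangle\in G\ast H$, I would first pick a common lower bound $p\in G$ of $p_0,p_1$, then a common lower bound $h\in H$ of $\dot q_0^G,\dot q_1^G$. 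Since $M[G]$ sees $h$ below both evaluations, the truth lemma supplies a name $\dot q$ with $\dot q^G=h$ and a condition $p''\in G$, $p''\leq_\PP p$, forcing $\dot q\in\dot\QQ$ and $\dot q\leq_{\dot\QQ}\dot q_0,\dot q_1$; then $\langle p'',\dot q\rangle\in G\ast H$ is the required lower bound.

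The main step is to show that $G\ast H$ meets every dense $D\in\C$. Working in the extension, I would form $D_G=\{\dot q^G\mid\exists p\in G\,[\langle p,\dot q\rangle\in D]\}$, which is the evaluation of the class name $\dot D=\{\langle\dot q,p\rangle\mid\langle p,\dot q\rangle\in D\}\in\C^\PP$ and hence lies in $\C[G]$. To see that $D_G$ is dense in $\dot\QQ^G$, fix $q=\dot q^G\in\dot\QQ^G$ and a condition $p_0\in G$ forcing $\dot q\in\dot\QQ$; the set $\{p\leq_\PP p_0\mid\exists\dot q'\,[\langle p,\dot q'\rangle\in D\wedge p\Vdash_\PP\dot q'\leq_{\dot\QQ}\dot q]\}$ is in $\C$ by the definability lemma, and it is dense below $p_0$, since applying density of $D$ to $\langle p,\dot q\rangle$ for any $p\leq_\PP p_0$ produces such a witness $\langle p',\dot q'\rangle\leq_{\PP\ast\dot\QQ}\langle p,\dot q\rangle$. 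By genericity of $G$ there is such a $p\in G$, giving $\langle p,\dot q'\rangle\in D$ with $\dot q'^G\leq_{\dot\QQ^G}q$, so $\dot q'^G\in D_G$ lies below $q$. Since $H$ is $\dot\QQ^G$-generic over $M[G]$ it meets $D_G$, and any witness $\dot q'^G\in H\cap D_G$ with $\langle p,\dot q'\rangle\in D$ and $p\in G$ shows $\langle p,\dot q'\rangle\in(G\ast H)\cap D$.

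The hard part is not the combinatorics but the bookkeeping that guarantees every auxiliary object is a legitimate class: that $\dot D$ is a genuine class $\PP$-name, that $D_G\in\C[G]$, and above all that the set of admissible first coordinates is in $\C$. All of these rest on the \emph{definability} half of the forcing theorem for $\PP$, which is exactly what tameness buys us, while the filter verification and the density argument repeatedly use the \emph{truth} half to pass between forcing statements and satisfaction in $M[G]$. A secondary point to watch is that $H$ is generic over the extension $M[G]$, so one must check $D_G$ is dense in all of $\dot\QQ^G$ rather than merely below a single condition, and that it is available in $\C[G]$; the reduction via $\dot D$ is what handles both concerns cleanly.
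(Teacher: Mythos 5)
Your proof is correct. The paper offers no proof of this lemma at all --- it is quoted verbatim from \cite{MR1780138}, Lemma 2.30(a) --- so there is no internal argument to compare against; yours is the standard one, and the facts it leans on (tame implies pretame, hence the definability lemma for atomic formulae by \cite[Theorem 2.18]{MR1780138}, hence the full forcing theorem for $\L^n$-formulae by Theorem \ref{thm:ft}, with class-name parameters such as $\dot\QQ$ and $\leq_{\dot\QQ}$ allowed) are exactly the facts the paper itself invokes when it applies this lemma in the proof of Theorem \ref{thm:failure tl}.
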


If $\PP$ is any notion of class forcing that satisfies the forcing theorem, we denote by $\dot\FF$ the canonical class $\PP$-name for $\FF^{M[G]}$ in a $\PP$-generic
extension $M[G]$. As an example of the failure of the truth lemma, we will consider two-step iterations where the second iterand will be of the form $\FF^{M[G]}$,
where $G$ is generic for the first iterand.

\begin{theorem}\label{thm:failure tl}
Let $\MM=\langle M,\C\rangle$ be a countable transitive model of $\GB$. Let $\PP$ be a notion of class forcing which is definable over $M$ and has 
the following properties:
\begin{enumerate-(a)}
 \item $\PP$ is tame. 
 \item There is a $\PP$-generic filter $G$ such that $M[G]$ is a Paris model.
 \item For every $p\in G$ there is a $\PP$-generic filter $\bar{G}$ such that $M[\bar{G}]$ is not a Paris model.
\end{enumerate-(a)}
Then the truth lemma fails for $\PP\ast\dot{\FF}$. 
\end{theorem}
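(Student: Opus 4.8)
The plan is to manufacture a single formula whose truth in a $\PP\ast\dot\FF$-generic extension records exactly whether the intermediate $\PP$-generic extension is a Paris model, and then to play the generic filter supplied by (b) against those supplied by (c). First I would use (b) to fix a $\PP$-generic filter $G$ over $\MM$ with $M[G]$ a Paris model, and choose any filter $H$ that is $\FF^{M[G]}$-generic over $M[G]$ (such $H$ exists since $M[G]$ is countable in $\V$). By Lemma~\ref{lem:2step}, $K=G\ast H$ is $\PP\ast\dot\FF$-generic over $\MM$. Let $\dot E$ denote the canonical $\PP\ast\dot\FF$-name for the Friedman generic relation on $\omega$ added by the second step, so that $\dot E^{K}=E$ with $\langle\omega,E\rangle\cong\langle M[G],\in\rangle$ in $\V$, as in Lemma~\ref{lemma:friedman forcing}.

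Next I would write down, in the forcing language of $\PP\ast\dot\FF$ over $M$, a formula $\psi$ expressing ``$\langle\omega,\dot E\rangle$ is a Paris model''. Relativising the translation $\varphi\mapsto\varphi^*$ from Lemma~\ref{lem:varphi*} to the second-step forcing, I can take
\[\psi\;\equiv\;\bigwedge_{n\in\omega}\Bigl(\neg(\mathrm{Ord})^*_{(n)}\;\vee\;\bigvee_{\gbr{\theta}\in\Fm_1}\bigl(\theta\wedge\forall v_1(\theta[v_1/v_0]\to v_1=v_0)\bigr)^*_{(n)}\Bigr),\]
which is an $\L_{\On,0}^{\Vdash}(\PP\ast\dot\FF,M)$-formula using only countable conjunctions and disjunctions: the guard $(\mathrm{Ord})^*_{(n)}$ says that $n$ is an ordinal of $\langle\omega,\dot E\rangle$, and the inner disjunct says that $n$ is the unique element of $\langle\omega,\dot E\rangle$ satisfying $\theta$. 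The key point, verified by the same induction as in Lemma~\ref{lem:varphi*}, is that for every $\PP\ast\dot\FF$-generic $\bar G\ast\bar H$ one has $M[\bar G][\bar H]\models\psi^{\bar G\ast\bar H}$ if and only if $\langle\omega,\dot E^{\bar G\ast\bar H}\rangle$ is a Paris model, which in turn holds if and only if $M[\bar G]$ is a Paris model, the last equivalence because $\dot E^{\bar G\ast\bar H}$ codes an $\in$-structure isomorphic to $\langle M[\bar G],\in\rangle$ in $\V$ and being a Paris model is invariant under isomorphism. In particular $M[G][H]\models\psi^{K}$.

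Finally I would derive the failure of the truth lemma. Suppose towards a contradiction that $\PP\ast\dot\FF$ satisfies the truth lemma for $\psi$ over $\MM$. Since $M[G][H]\models\psi^{K}$, there is a condition $\langle p,\dot q\rangle\in K$ with $\langle p,\dot q\rangle\Vdash^{\MM}_{\PP\ast\dot\FF}\psi$; in particular $p\in G$. Applying (c) to $p$, I obtain a $\PP$-generic filter $\bar G$ with $p\in\bar G$ and $M[\bar G]$ \emph{not} a Paris model. As $p\in\bar G$ and $p\Vdash_\PP\dot q\in\dot\FF$, the evaluation $\dot q^{\bar G}$ is a condition of $\FF^{M[\bar G]}$, so (again using that $M[\bar G]$ is countable) I may choose a filter $\bar H$ that is $\FF^{M[\bar G]}$-generic over $M[\bar G]$ with $\dot q^{\bar G}\in\bar H$. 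Then $\bar G\ast\bar H$ is $\PP\ast\dot\FF$-generic by Lemma~\ref{lem:2step} and $\langle p,\dot q\rangle\in\bar G\ast\bar H$, whence $M[\bar G][\bar H]\models\psi^{\bar G\ast\bar H}$. By the equivalence above this would force $M[\bar G]$ to be a Paris model, contradicting the choice of $\bar G$. Hence the truth lemma fails for $\psi$, and therefore, by Lemma~\ref{lem:names uft}, it fails for $\anf{v_0=v_1}$ as well.

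The step I expect to be the main obstacle is the precise formulation of $\psi$ together with its semantic equivalence: one must ensure that ``being a Paris model'' is faithfully captured by the infinitary translation and, crucially, that it is computed correctly inside the \emph{non-tame} extension $M[\bar G][\bar H]$. This is exactly where the absoluteness of satisfaction for the set structure $\langle\omega,\dot E\rangle$ and the $\V$-isomorphism of Lemma~\ref{lemma:friedman forcing} do the work. By contrast, the two-generic diagonalisation against (b) and (c) is then a routine density and genericity argument built on Lemma~\ref{lem:2step}.
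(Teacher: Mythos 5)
Your proposal is correct, and its architecture coincides with the paper's own proof: the same witnessing formula (your $\psi$ is, up to rewriting $\neg A\vee B$ as $A\to B$, exactly the paper's $\Phi=\bigwedge_{n\in\omega}\bigvee_{\gbr{\varphi}\in\Fm_1}[\Omega^{**}_n\to(\Psi_\gbr{\varphi})^{**}_n]$), the same diagonalisation of the generic supplied by (b) against those supplied by (c) via Lemma \ref{lem:2step}, and the same final transfer of the failure from the infinitary formula to $\anf{v_0=v_1}$ via Lemma \ref{lem:names uft}. The one step where you genuinely diverge is the verification of the central equivalence that $M[\bar G][\bar H]\models\psi^{\bar G\ast\bar H}$ holds if and only if $M[\bar G]$ is a Paris model. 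The paper proves this forcing-theoretically: it applies Lemma \ref{lem:varphi*} inside $M[\bar G]$ to characterize truth in $M[\bar G]$ via the $\FF^{M[\bar G]}$-forcing relation, and then uses tameness of $\PP$ (its truth lemma) to produce, for each conjunct, a condition $\langle q,\dot p^{x}_n\rangle$ of the iteration lying in the generic and forcing that conjunct. You instead argue semantically: the evaluation of the translated formula satisfies $(\varphi^{*}_{\vn})^{\bar G\ast\bar H}$ if and only if $\langle\omega,E\rangle\models\varphi[\vn]$ (a direct induction on $\varphi$, in fact simpler than Lemma \ref{lem:varphi*} since no forcing relation intervenes), and Paris-ness then transfers through the $\V$-isomorphism $\langle\omega,E\rangle\cong\langle M[\bar G],\in\rangle$ of Lemma \ref{lemma:friedman forcing}. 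This is a legitimate and arguably cleaner route; it confines the use of tameness to what is genuinely needed (defining $\PP\ast\dot\FF$ and applying Lemma \ref{lem:2step}, plus $M[\bar G]\models\ZF^-$ so that Lemma \ref{lemma:friedman forcing} applies to $\FF^{M[\bar G]}$), whereas the paper additionally invokes the truth lemma of $\PP$ at this point. One detail that you flag as the main obstacle but do not execute: for $\psi$ to literally be an $\L_{\On,0}^{\Vdash}(\PP\ast\dot\FF,M)$-formula, you must replace, in each translated formula, every occurrence of an $\FF$-condition $p$ by the iteration condition $\langle\one_\PP,\check p\rangle$ (the paper's ${}^{**}$-translation); this is unproblematic precisely because every condition occurring in these translations has $f_p=\emptyset$ and hence lies in $M$, independently of the first-step generic --- this two-line observation is what makes your ``relativising the translation to the second-step forcing'' precise.
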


\begin{proof} 
 We will find an infinitary formula $\Phi$ in $\L_{\On,0}^{\Vdash}(\PP\ast\dot\FF,M)$ such that if $G*H$ is $\PP\ast\dot\FF$-generic over $\MM$, then $\Phi^{G*H}$ expresses that $M[G]$ is a Paris model. Using this, we choose $G$ as in (b). Then $\Phi^{G*H}$ holds, while by (c), there cannot be a condition in $G*H$ forcing this, implying that the truth lemma fails for $\PP\ast\dot{\FF}$.

Given a formula $\varphi$ with exactly one free variable, let $\Psi_{\ulcorner\varphi\urcorner}$ denote the formula
$$\varphi(v_0) ~\wedge ~\forall v_1~\left[\varphi(v_1) ~\ra ~ v_1=v_0\right],$$ i.e. $\Psi_{\ulcorner\varphi\urcorner}(x)$ states that ``\emph{$x$ is unique such that $\varphi(x)$ holds}''. Similarly, let $\Omega(x)$ be the formula expressing that $x$ is an ordinal. 
If $G$ is $\PP$-generic over $M$, then $M[G]$ is a Paris model if and only if for all ordinals $\alpha\in M[G]$ there is $\varphi$ such that 
$M[G]\models\Psi_\gbr{\varphi}(\alpha)$. Now recall that as described in Section \ref{sec:fr not def}, for each $\varphi$ we can assign to $\Psi_{\ulcorner\varphi\urcorner}$, $\Omega$ and $n\in\omega$ infinitary formulae $(\Psi_\gbr{\varphi})^{*}_n$ and $\Omega^{*}_n$ in the forcing language of $\FF^{M[G]}$ with the properties (as in Lemma \ref{lem:varphi*})
\begin{align*}M[G]\models\Psi_{\ulcorner\varphi\urcorner}(x)\,&\iff\,p_n^x\Vdash_{\FF^{M[G]}}^{\MM[G]}(\Psi_\gbr{\varphi})^{*}_n\\
M[G]\models\Omega(x)\,&\iff\,p_n^x\Vdash_{\FF^{M[G]}}^{\MM[G]}\Omega^{*}_n.
\end{align*}
However, since we will need infinitary formulae in the forcing language of $\PP\ast\dot\FF$, we have to modify this approach slightly.
For a formula $\psi$ and $n\in\omega$, we define $\psi_n^{**}$ in the same way we defined $\psi_n^*$, but we replace every occurence of 
some condition $p\in\FF^{M[G]}$ by $\langle\one_\PP,\check{p}\rangle\in\PP\ast\dot\FF$. Note that this is possible, 
since for every condition $p$ which appears in $\psi_n^*$, the function $f_p$ is empty, and so $p\in M$. 
Let $$\Phi=\bigwedge_{n\in\omega}\;\bigvee_{\gbr{\varphi}\in\Fm_1}\left[\Omega_n^{**} ~\longrightarrow ~(\Psi_\gbr{\varphi})_n^{**}\right].$$

We claim that $M[G]$ is a Paris model if and only if $M[G][H]\models\Phi^{G\ast H}$ holds for every (or, equivalently, for some) filter $H$ which is $\FF^{M[G]}$-generic over $M[G]$.  
Suppose first that $M[G]$ is a Paris model. Let $H$ be $\FF^{M[G]}$-generic over $M[G]$ and $n\in\omega$. By a density argument, there is $x\in M[G]$ such that $p_n^x$ (as defined in ${M[G]}$) is in $H$. Since $M[G]$ is a Paris model, 
either $M[G]\models\lnot\Omega(x)$ or there is some formula $\varphi$ such that $M[G]\models\Psi_\gbr{\varphi}(x)$. Let $\dot{x}\in M^\PP$ be a name 
for $x$. Since $\PP$ is tame, it satisfies the truth lemma and hence there is $q\in\PP$ with 
$q\Vdash_\PP^\MM[\Omega(\dot x)\to\Psi_\gbr{\varphi}(\dot x)]$. Let $\dot p_n^x$ be a $\PP$-name for $p_n^x\in\FF^{M[G]}$. 
Then $\langle q,\dot p_n^x\rangle\Vdash_{\PP\ast\dot\FF}^\MM[\Omega_n^{**}\to(\Psi_\gbr{\varphi})_n^{**}]$.

Conversely, suppose that $M[G][H]\models\Phi^{G\ast H}$. Let $\alpha\in M[G]$ be an ordinal. Let $n\in\omega$ such that $p_n^\alpha\in H$. By assumption, there is $\gbr{\varphi}\in\Fm_1$ such that $M[G][H]\models((\Psi_\gbr{\varphi})_n^{**})^{G\ast H}$. We want to verify that $M[G]\models\Psi_\gbr{\varphi}(\alpha).$
If not, we can proceed as before and obtain $q\in G$ such that
$\langle q,\dot p_n^\alpha\rangle\Vdash_{\PP\ast\dot\FF}^\MM(\neg\Psi_\gbr{\varphi})_n^{**}$, which is contradictory. 
\end{proof}

\begin{remark}\label{theremark}
By a special case of more general results in \cite{MR2140616} and \cite{MR3087066} there is a tame notion of class forcing $\PP^*$ such that for every countable transitive $\GB^-$-model of the form $\MM=\langle M,\mathrm{Def}(M)\rangle$, there is
a $\PP^*$-generic filter $G$ over $\MM$ such that $M[G]$ is pointwise definable. For the benefit of the reader, we will describe
a very simple tame notion of class forcing $\PP$ and indicate a proof that there is a $\PP$-generic extension which is a Paris model over any countable transitive $\GB^-$-model $\MM=\langle M,\mathrm{Def}(M)\rangle$ such that $\langle M,\in\rangle\models V=L$.
The outline of the argument follows the proof of \cite[Theorem 2.8]{MR2140616}. 

$\PP$ is a two-step iteration, where the first step is to force with $\QQ=\langle 2^{<\On},\supseteq \rangle$ (note that this forcing notion does not add new sets) and construct a 
$\QQ$-generic filter $U$ such that all ordinals of $M$ are first-order definable over $\langle M,\in,U\rangle$ without parameters (this construction of $U$ is done as in \cite{MR3087066}). 
The second step is to code the generic $U$ into the continuum function, using a reverse Easton iteration. 
Since the ground model $\mathsf L$ is definable in the extension and every element of $\mathsf L$ is definable in $\mathsf L$ from an ordinal, it follows that $M[G]$ is pointwise definable. 
\end{remark}

The forcing notion $\PP$ described in Remark \ref{theremark} satisfies (a) and (b) over any countable 
transitive model of $\GB^-$ of the form $\langle M,\mathrm{Def}(M)\rangle$. 
We will now give two consistent examples of such models over 
which $\PP$ also satisfies Condition (c) in the statement of Theorem \ref{thm:failure tl}.

\begin{example}
\begin{enumerate-(1)}
 \item The simplest possibility is to start with a model 
$\MM=\langle M,\C\rangle$ of $\KM$. By forcing over the first-order part $\langle M,\mathrm{Def}(M)\rangle$ of $\MM$, we may obtain a $\PP$-generic filter $G$
such that $M[G]$ is a Paris model. On the other hand, we can force over the $\KM$-model $\MM$ and choose 
a filter $\bar G$ which is $\PP$-generic over $\MM$. Since $\PP$ is tame, by \cite[Theorem 23]{antos2015class} $\MM[\bar G]\models\KM$. 
But this is a contradiction, since no model of $\KM$ is a Paris model:

 Suppose for a contradiction that $\mathbb N=\langle N,\mathcal D\rangle$ is a model of $\KM$ which is a Paris model. 
 Using that $\mathcal D$ contains a first-order truth predicate for formulae with one free variable, 
 it follows that $\mathcal D$ contains a surjection from $\omega$ to $\On^N$, contradicting Replacement.
\item If we want to avoid $\KM$, we can instead start with a countable transitive model $\langle M,\in\rangle$ of $\ZFC$ which has cardinality $\aleph_1$ in $\mathsf{L}[M]$ and which is closed under countable sequences in $\mathsf{L}[M]$. 
Now since the forcing $\PP$ is $\sigma$-closed in $M$, for every $p\in\PP^M$ there is a $\PP^M$-generic filter $G_p$ over $M$ in $\mathsf{L}[M]$ containing $p$.  
Since $M$ is uncountable in $\mathsf{L}[M]$, no generic extension of the form $M[G_p]$ is a Paris model. 
Note that it is easy to obtain such a model $M$ starting in a model of $\mathsf{V}=\mathsf{L}$ with an inaccessible cardinal and then forcing with $\Col(\omega,\omega_1)$. 
\end{enumerate-(1)} 
\end{example}

In particular, this proves Theorem \ref{theorem:FailureTruthLemma}.

\section{Non-isomorphic Boolean completions}

Any separative set-sized partial order $\PP$ has a Boolean completion that is
provided by the regular open subsets of $\PP$, and this completion is unique: if $\BB_0$ and $\BB_1$ are 
both Boolean completions of $\PP$ and $e_0:\PP\ra\BB_0$ and $e_1:\PP\ra\BB_1$ are dense embeddings, then 
one can define an isomorphism by $f(b)=\sup\{e_1(p)\mid p\in\PP\wedge e_0(p)\leq b\}$ for $b\in\BB_0$. Moreover,
$f$ fixes $\PP$ in the sense that $f(e_0(p))=e_1(p)$ for every condition $p\in\PP$. 
This proof however does not work for class forcing since in general, we can only form set-sized suprema within $\MM$-complete Boolean algebras.
In the remainder of this section, we will show that the result actually fails for class forcing.

\begin{definition}\label{def:unique bc}
We say that a notion of class forcing \emph{$\PP$ has a unique Boolean completion in $\MM$}, if $\PP$ has a Boolean completion 
$\BB_0$ in $\MM$ and for every other Boolean completion $\BB_1$ of $\PP$ in $\MM$ 
there is an isomorphism in $\V$ between $\BB_0$ and $\BB_1$ which fixes $\PP$. 
\end{definition}

\begin{definition}
We say that a notion of class forcing $\PP$ satisfies the \emph{$\On$-chain condition} (or simply \emph{$\On$-cc}) over $\MM$,
if every antichain of $\PP$ which is in $\C$ is already in $M$.  
\end{definition}

\begin{lemma}\label{lemma:add sup}
 Let $\PP$ be a notion of class forcing for $\MM$ which satisfies the forcing theorem and let $A\subseteq\PP$ be a class in $\C$ such that 
 $A$ does not have a supremum in $\PP$. Let $\QQ=\PP\cup\{\sup A\}$, where $\sup A\in M$ is a new element which is not in $\PP$ and let $\le_{\QQ}$ be such that
 \begin{align*}
 \le_{\QQ}&\upharpoonright(\PP\times\PP)=\,\le_{\PP},\\
 \sup A\leq_\QQ p&\Llr\forall a\in A(a\leq_\QQ p)\textrm{ and}\\
 p\leq_\QQ\sup A&\Llr A\text{ is predense below }p.
 \end{align*}
Then $\QQ$ satisfies the forcing theorem. 
\end{lemma}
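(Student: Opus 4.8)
The plan is to reduce everything to the definability lemma for the single formula ``$v_0=v_1$'' over $\MM$: by Theorem \ref{thm:ft} it suffices to show that $\{\langle q,\sigma,\tau\rangle\in\QQ\times M^\QQ\times M^\QQ\mid q\Vdash_\QQ^\MM\sigma=\tau\}\in\C$, and I will obtain this by expressing the $\QQ$-forcing relation for equality in terms of the $\PP$-forcing relation for equality, which lies in $\C$ since $\PP$ satisfies the forcing theorem. First some structural observations (assume $A\neq\emptyset$, the degenerate case being trivial). Each $a\in A$ satisfies $a\le_\QQ\sup A$ since $A$ is trivially predense below $a$, and $A$ is predense below $\sup A$; a standard density argument then shows that for every $\QQ$-generic filter $G$ one has $\sup A\in G$ iff $A\cap G\neq\emptyset$, that $H:=G\cap\PP$ is $\PP$-generic, and that $\sup A\in G\iff A\cap H\neq\emptyset$. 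I write $b(H)\in\{0,1\}$ for the truth value of ``$A\cap H\neq\emptyset$''.

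The key device --- which circumvents the fact that a \emph{proper class} $A$ makes the naive translation of $\QQ$-names into $\PP$-names illegitimate (the resulting ``names'' would contain proper classes) --- is to give two honest set-name translations, selected by the single bit $b(H)$. Define, by recursion on name rank inside $M$,
\[ \sigma_1=\{\langle\tau_1,q\rangle\mid\langle\tau,q\rangle\in\sigma,\ q\in\PP\}\cup\{\langle\tau_1,\one_\PP\rangle\mid\langle\tau,\sup A\rangle\in\sigma\}, \]
\[ \sigma_0=\{\langle\tau_0,q\rangle\mid\langle\tau,q\rangle\in\sigma,\ q\in\PP\}, \]
for $\sigma\in M^\QQ$; these are elements of $M^\PP$, and the assignments $\sigma\mapsto\sigma_0,\sigma_1$ are definable over $\langle M,\in\rangle$ with parameter $\sup A$, hence lie in $\C$. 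A routine induction on name rank, using $\sup A\in G\iff b(H)=1$, shows that $\sigma^G=\sigma_{b(H)}^{\,H}$ for every $\sigma\in M^\QQ$.

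It then remains to translate the forcing relation. For $p\in\PP$, splitting the generic filters $H\ni p$ according to whether they meet $A$ and invoking the truth lemma and the definability lemma for $\PP$, I will show
\[ p\Vdash_\QQ^\MM\sigma=\tau\iff\big[\forall a\in A\,\forall r\le_\PP p,a\ (r\Vdash_\PP^\MM\sigma_1=\tau_1)\big]\wedge\big[\forall r\le_\PP p\ (r\perp_\PP A\to r\Vdash_\PP^\MM\sigma_0=\tau_0)\big], \]
where $r\perp_\PP A$ abbreviates ``$r$ is incompatible with every element of $A$''; the left conjunct captures the generics with $b(H)=1$ and the right conjunct those with $b(H)=0$ (here one uses that if $H$ avoids $A$ then, by a density argument, $H$ contains a condition incompatible with all of $A$). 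For the remaining condition I will use $\{G\mid\sup A\in G\}=\bigcup_{a\in A}\{G\mid a\in G\}$ to get $\sup A\Vdash_\QQ^\MM\sigma=\tau\iff\forall a\in A\,(a\Vdash_\QQ^\MM\sigma=\tau)$, reducing to the already-treated case $p=a\in\PP$. Since $A,\PP,\le_\PP,\perp_\PP$ and the $\PP$-forcing relation for equality all belong to $\C$, and the displayed conditions use only set quantifiers with these as class parameters, first-order class comprehension places the resulting relation in $\C$, as required.

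The main obstacle is exactly the one the two-valued translation is designed to overcome: because $A$ may be a proper class, one cannot simply replace each pair $\langle\tau,\sup A\rangle$ by a class $\{\langle\tau^*,a\rangle\mid a\in A\}$ to turn a $\QQ$-name into a $\PP$-name, nor disjoin over $A$ in the infinitary forcing language. Isolating the value of $\sigma^G$ as one of the two \emph{set}-name evaluations $\sigma_0^H,\sigma_1^H$ according to the single bit ``$A\cap H\neq\emptyset$'', and then matching this bit against compatibility with $A$ inside the definable $\PP$-forcing relation, is the crux; the verifications of the displayed equivalences through the $\PP$-truth lemma are then routine.
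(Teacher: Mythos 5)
Your proposal is correct and follows essentially the same route as the paper's proof: the paper also defines two name translations (your $\sigma_0,\sigma_1$ are exactly its $\sigma^-$, deleting pairs indexed by $\sup A$, and $\sigma^+$, replacing $\sup A$ by $\one_\PP$) and characterizes the $\QQ$-forcing relation for an atomic formula via the definable $\PP$-forcing relation, splitting conditions according to whether they lie below some element of $A$ or are incompatible with all of $A$, then invokes Theorem \ref{thm:ft}. The only cosmetic differences are that the paper works with $\anf{v_0\in v_1}$ rather than $\anf{v_0=v_1}$ and states a single characterization uniform in $p\in\QQ$ (using $q\le_\QQ p$) instead of treating $p\in\PP$ and $p=\sup A$ separately.
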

\begin{proof}
For $\sigma\in M^\QQ$, let $\sigma^+$ denote the $\PP$-name obtained from $\sigma$ by replacing every occurrence of $\sup A$ in $\tc(\sigma)$
by $\one_\PP$, and let $\sigma^-\in M^\PP$ be the name obtained from $\sigma$ by removing every pair of the form $\langle\mu,\sup A\rangle$
from $\tc(\sigma)$. To be precise about the latter, we inductively define $\sigma^-=\{\langle\tau^-,p\rangle\in\sigma\mid p\ne\sup A\}$. One easily checks that for all $p\in\QQ$ and all $\QQ$-names $\sigma$ and $\tau$,
$$p\Vdash_\QQ\sigma\in\tau\Llr\forall q\in\PP\,\left[q\leq_\QQ p\ra[(q\bot_\PP A\ra q\Vdash_\PP\sigma^-\in\tau^-)\wedge(q\leq_\PP A\ra q\Vdash_\PP\sigma^+\in\tau^+)]\right],$$
where $q\le_\PP A$ abbreviates $\exists a\in A\ (q\le_\PP a)$ and $q\bot_\PP A$ abbreviates $\forall a\in A\ (q\bot_\PP a)$. 
\end{proof}

Under below assumptions on $\MM$, the authors originally showed that there is a particular class forcing for $\MM$ which has a non-unique Boolean completion. The following more general result was then observed and pointed out to the authors by Joel David Hamkins.

\begin{theorem}
Let $\MM=\langle M,\C\rangle$ be a model of $\GB^-$ such that $\C$ contains a well-order of $M$ of order type $\On^M$. Then a separative antisymmetric notion of class forcing $\PP$ for $\MM$
has a unique Boolean completion in $\MM$ if and only if it satisfies the $\On$-cc over $\MM$.  
\end{theorem}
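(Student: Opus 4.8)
The plan is to prove both implications, the common engine being the following consequence of the hypothesis, which uses only the well-order and not yet the $\On$-cc. Fix the well-order $<^*$ of $M$ of order type $\On^M$ lying in $\C$; then every proper initial segment $\{x : x <^* c\}$ is a \emph{set} in $M$, being the image of an ordinal under a class function (apply Collection). Hence for any class $D \in \C$ with $D \subseteq \PP$ the \emph{greedy} maximal antichain $A^*_D$ — the set of $d \in D$ incompatible with all $<^*$-earlier members of $A^*_D$ — can be computed \emph{locally}: whether $d \in A^*_D$ depends only on $D \cap \{x : x <^* d\}$, a set on which the greedy construction is carried out inside $M \models \ZF^-$. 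Thus membership in $A^*_D$ is first-order over $\langle M,\in\rangle$ with parameters $D,<^*$, so $A^*_D \in \C$, and $A^*_D$ is a maximal antichain of $D$. I would record this first.

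For the implication from the $\On$-cc to uniqueness, the set-sized maximal antichains make the set-forcing argument go through. Under the $\On$-cc each $A^*_D$ lies in $\C$ and hence (being an antichain) in $M$; using these set-sized antichains to run the recursion of Theorem \ref{thm:compl forcing thm} (equivalently, the regular-open construction) one verifies the definability lemma for $\anf{$v_0=v_1$}$, so by Theorem \ref{thm:compl forcing thm} $\PP$ has a completion $e \colon \PP \to \BB$. For uniqueness, let $e' \colon \PP \to \BB'$ be any completion and set $f(b) = \sup_{\BB'} e'[A_b]$, where $A_b \in M$ is the greedy maximal antichain of $\{p \in \PP : e(p) \le_\BB b\}$; this supremum exists because $A_b$ is a set and $\BB'$ is $M$-complete. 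One then checks routinely that $f$ is a Boolean isomorphism with $f \circ e = e'$, so the completion is unique in the sense of Definition \ref{def:unique bc}.

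For the converse I argue contrapositively: assuming $\PP$ fails the $\On$-cc, I produce two non-isomorphic completions. Fix an antichain $A \in \C \setminus M$; since Separation admits class parameters, $A \not\subseteq x$ for any $x \in M$, so $A$ is a proper class, and extending it greedily I may take $A$ maximal (the extension again avoids $M$, else $A$ would be separated out). If $\PP$ has no completion we are done; otherwise let $\BB$ be the completion furnished by Theorem \ref{thm:compl forcing thm}, and split $A = A_0 \sqcup A_1$ into two proper-class pieces along $<^*$. The crux is that $\sup_\BB e[A_0]$ does \emph{not} exist in $\BB$. Granting this, Lemma \ref{lemma:add sup}, applied to the separative forcing $\BB \setminus \{0_\BB\}$ and the class $e[A_0]$, yields a forcing $\QQ$ satisfying the forcing theorem in which that supremum is realized; since $\PP$ is dense in $\BB$ and hence in $\QQ$, the completion $\BB^+$ of $\QQ$ is again a completion of $\PP$, and in it $\sup e[A_0]$ exists. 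A $\PP$-fixing isomorphism $\BB^+ \to \BB$ would fix $e[A_0]$ pointwise and preserve suprema, transporting the supremum of $e[A_0]$ into $\BB$, where it is absent; hence $\BB \not\cong \BB^+$ and $\PP$ has no unique completion.

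The main obstacle is exactly the crux of the converse: that the completion $\BB$ genuinely omits the supremum of a proper-class antichain. I expect to handle it through Lemma \ref{lem:names uft}, reducing to the claim that no $\L_{\On,0}^\Vdash(\PP,M)$-formula is equivalent over $\one_\PP$ to the (illegitimate) proper-class disjunction $\bigvee_{a \in A_0}(\check a \in \dot G)$. Each such formula has a set of mentioned names and conditions whose $M$-ranks are bounded by some $\theta < \On^M$, whereas $A_0$ is cofinal in $\On^M$-rank; using this mismatch I would exhibit, for any candidate least upper bound $\varphi$, a strictly smaller upper bound of $e[A_0]$ (equivalently, a generic filter meeting $A_0$ on which $\varphi$ fails), contradicting minimality. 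Everything else is either the greedy-antichain bookkeeping of the first paragraph or a transcription of the classical set-forcing uniqueness proof.
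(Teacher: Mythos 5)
Your forward direction is essentially the paper's argument: the greedy antichain along $<^*$ is exactly how the paper extracts, inside each class $D_b=\{p\in\PP\mid e(p)\le_\BB b\}$, a set-sized maximal antichain via the $\On$-cc, and your map $b\mapsto\sup_{\BB'}e'[A_b]$ is the paper's canonical $\PP$-fixing embedding (the paper gets existence of the completion by noting that $\On$-cc plus a global well-order gives pretameness and citing Corollary \ref{cor:pretame bc}, which is tighter than your sketch of ``running the recursion'', but that difference is cosmetic). The genuine gap is the crux of your converse: the claim that for your fixed split $A=A_0\sqcup A_1$ along $<^*$ the supremum $\sup_\BB e[A_0]$ fails to exist is not provable, and is in fact false in general. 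Consider the forcing whose conditions are $\one$, two incompatible conditions $p_0,p_1$ below it, and atoms $q^i_\alpha\le p_i$ for $i<2$, $\alpha\in\On^M$; it is separative, antisymmetric, pretame (atoms are dense), and $A=\{q^i_\alpha\mid i<2,\,\alpha\in\On^M\}$ is an antichain in $\C\setminus M$, so the $\On$-cc fails. Given any well-order $<^*$ in $\C$, one may define such a $\PP$ in $\C$ so that the parity split of $A$ along $<^*$ is exactly the split into the two cones, $A_i=\{q^i_\alpha\mid\alpha\in\On^M\}$. Then both suprema exist: $\sup_\BB e[A_i]=e(p_i)$, since $A_i$ lies below $p_i$ and is predense below $p_i$ (any upper bound $[\varphi]$ of $e[A_i]$ satisfies $p_i\Vdash\varphi$ by genericity, so $e(p_i)\le[\varphi]$). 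Your rank-mismatch repair cannot rescue this: the natural strictly smaller upper bound $\varphi\wedge\neg(\check a_1\in\dot G)$ with $a_1\in A_1$ of high rank is strictly below $\varphi$ only if $a_1\nVdash\neg\varphi$, and in this example every $a_1\in A_1$ forces $\neg(\check p_0\in\dot G)$. Two further slips: your parenthetical ``a generic filter meeting $A_0$ on which $\varphi$ fails'' is impossible once $\varphi$ is an upper bound at all; and over $\ZF^-$ a proper class need not have ranks cofinal in $\On^M$ (think of a class of reals in a model without power set), so even the rank-unboundedness of $A_0$ is unjustified.

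What the paper does at this point is deliberately non-constructive: it does not exhibit a particular supremum-free subclass, but proves one exists. If every subclass of $A$ in $\C$ had a supremum in $\BB$, then $X\mapsto\sup_\BB X$ would be injective on $\mathcal P(A)\cap\C$ (distinct subclasses of an antichain have distinct suprema, since $a\in X\setminus Y$ forces $a\wedge\sup Y=0$); after thinning $A$ to order type $\On^M$, this injection composed with the well-order yields a surjection $s\colon\On^M\to\mathcal P(\On^M)\cap\C$, and the class $\{\alpha\mid\alpha\notin s(\alpha)\}\in\C$ gives Russell's paradox. Once a supremum-free subclass is in hand, your endgame (adjoin its supremum via Lemma \ref{lemma:add sup}, recomplete via Theorem \ref{thm:compl forcing thm}, and note that a $\PP$-fixing isomorphism would transport the new supremum back into $\BB$, where it is absent) is correct and matches the paper. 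So the fix is to replace your explicit split and rank argument by this diagonalization; no canonical choice of the bad subclass is available.
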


\begin{proof}
Suppose first that $\PP$ satisfies the $\On$-cc over $\MM$. As can easily be observed from the combinatorial characterization of pretameness in \cite[Chapter 2.2]{MR1780138}, since $\C$ contains a global well-order of $M$, $\PP$ is pretame and 
therefore it has a Boolean completion $\BB(\PP)\in\C$ by Corollary \ref{cor:pretame bc}. Let $\BB$ be another Boolean completion of $\PP$.
Without loss of generality, we can assume that $\PP$ is a subset of $\BB$. Then every element $b\in\BB$ satisfies 
$b=\sup_\BB D_b$, where $D_b=\{p\in\PP\mid p\leq_{\BB} b\}\in\C$. 
But using the global well-order of $M$ we obtain that $D_b$ contains an antichain which is maximal in $D_b$. Moreover, since $\PP$ satisfies the $\On$-cc,
every such antichain lies in $M$. Furthermore, observe that if $A$ and $A'$ are two antichains which are maximal in $D_b$, then 
$\sup_\BB A=\sup_\BB A'=b$ and $\sup_{\BB(\PP)}A=\sup_{\BB(\PP)}A'$. But this gives a canonical embedding of 
$\BB$ into $\BB(\PP)$ which fixes $\PP$. It is clearly surjective, since the same argument as above can be done within $\BB(\PP)$.
This shows that $\PP$ has a unique Boolean completion in the sense of Definition \ref{def:unique bc}.

Conversely, assume that $\C$ contains an antichain $A$ of $\PP$ which is not in $M$. Moreover, suppose that $\PP$ satisfies the 
forcing theorem and therefore it has a Boolean completion $\BB(\PP)$ as provided by Theorem \ref{thm:compl forcing thm} (otherwise 
$\PP$ has no Boolean completion and in particular no unique Boolean completion). We may assume that $\PP\subseteq\BB(\PP)$. Note that $A$ 
remains a class-sized antichain in $\BB(\PP)$. We may assume, by replacing $A$ with some subclass of $A$ if necessary, that $A$ has order type $\On^M$ with respect to the given wellorder of $M$.

We claim that $A$ contains a subclass in $\C$ which does not have a supremum in $\BB(\PP)$. 
Assume, towards a contradiction, that every subclass of $A$ which is in $\C$ has a supremum in $\BB(\PP)$. 
Then since $A$ is an antichain, the mapping $i:\mathcal P(A)\cap\C\ra\BB(\PP)$ given by $i(X)=\sup_{\BB(\PP)} X$ is injective. 
By assumption, there is a surjection $\On^M\ra M$. But by the above, this yields a surjection from $\On^M$ onto $\mathcal P(A)\cap\C$ and 
hence there is a surjection $s:\On^M\ra\mathcal P(\On^M)\cap\C$, since $A$ has order type $\On^M$. Consider now 
$C=\{\alpha\in\On^M\mid\alpha\notin s(\alpha)\}$ and let $\alpha\in\On^M$ such that $C=s(\alpha)$. But this leads to Russell's paradox, since 
$\alpha\in s(\alpha)=C$ if and only if $\alpha\notin s(\alpha)$.

We may therefore, without loss of generality, assume that $A$ does not have a supremum in $\BB(\PP)$. Using Lemma \ref{lemma:add sup},
we can enlarge $\BB(\PP)$ to a notion of forcing $\QQ$ which satisfies the forcing theorem, such that $A$ has a supremum in $\QQ$. 
Moreover, it is straightforward to check that this notion of forcing $\QQ$ provided by Lemma \ref{lemma:add sup} is antisymmetric and separative, so we can find a Boolean completion $\BB(\QQ)$ of $\QQ$ as provided by Theorem \ref{thm:compl forcing thm}. We may assume that $\QQ\subseteq\BB(\QQ)$. Since $\PP$ is dense in $\QQ$, $\BB(\QQ)$ is also a Boolean completion of $\PP$. 

It is now straightforward to check that any isomorphism between $\BB(\PP)$ and $\BB(\QQ)$ which fixes $\PP$ will also fix $\BB(\PP)$. 
Since $\BB(\PP)\subsetneq\QQ\subseteq\BB(\QQ)$, such an isomorphism cannot exist.
\end{proof}

\begin{example}
There are many examples of notions of forcing $\PP$ over a model $\MM$ of $\GB^-$ with a global well-order of order type $\On^M$
which have a Boolean completion which is not unique up to isomorphisms preserving $\PP$.
The simplest examples are the collapse forcings $\Col(\omega,\On)^M$ and $\Col_*(\omega,\On)^M$ which both have
a Boolean completion by Lemma \ref{lem:col proj} and Theorem \ref{thm:compl forcing thm}. However, they clearly have 
class-sized antichains such as $\{\langle0,\alpha\rangle\mid\alpha\in\On^M\}$ and therefore do not have a unique Boolean completion
by the characterization above. 
\end{example}

\section{Open Questions} 
A topic which has not yet been discussed systematically is the preservation of the axioms of $\ZFC$ in class forcing extensions with the generic filter and the ground model as predicates.
Preservation of Replacement and Power Set have been characterized by Sy Friedman in \cite{MR1780138}. 
However, as one can easily observe, all collapse forcings which we have mentioned in this paper do not only destroy
Replacement but also Separation. It is therefore natural to ask the following question, which shall be discussed in \cite{class_forcing2}.

\begin{question}
Is there a notion of class forcing which preserves Separation, but not Replacement? If so, does preservation of Separation
already imply the forcing theorem? 
\end{question}

Since the notions of forcing used in the above counterexamples destroy many axioms of $\ZFC$ even without the generic filter and the ground model as predicates, the following question arises. 

\begin{question} 
Suppose that $\mathbb{P}$ is a class forcing over a countable transitive model $M$ of $\mathsf{ZFC}$ such that $(M[G],\in)$ (without the generic filter and the ground model as predicates) satisfies $\mathsf{ZFC}$ for every $\mathbb{P}$-generic filter $G$ over $M$. Does $\mathbb{P}$ satisfy the forcing theorem? 
\end{question} 

The reason why the forcing $\FF^M$, as introduced in Section \ref{examples}, does not satisfy the forcing theorem, is essentially because it adds a specific subset $E$ of $\omega$. It would be interesting to know how closely connected failures of the forcing theorem are to such \emph{witnessing sets}:

\begin{question}
  Let $M$ be a countable transitive model of $\mathsf{ZFC}$. Assume $\PP$ and $\QQ$ are both notions of class forcing for $M$, which have the same generic extensions without predicates, i.e.\ for which $\{M[G]\mid G$ is generic for $\PP$ over $M\}=\{M[H]\mid H$ is generic for $\QQ$ over $M\}$. Assume further that $\PP$ satisfies the forcing theorem. Does it follow that $\QQ$ satisfies the forcing theorem?
\end{question}

A further question is concerned with the amenability of 
the forcing relation. In Section \ref{sec:fr not def}, we have shown that for 
countable transitive Paris models $M\models\ZF^-$, the forcing relation for $\FF^M$ 
is not amenable. The following questions remain open.

\begin{question}\label{question:amenability}
\begin{enumerate} 
\item Is there a notion of class forcing $\mathbb{P}$ such that for every model $M$ of $\ZFC$, the forcing relation for $\mathbb{P}$ is not amenable for $M$? 
\item Are there any provable implications between the amenability of the forcing relation and the truth lemma? 
\end{enumerate} 
\end{question}

We have shown that in certain models, there is a tame notion of class forcing $\PP$ such that $\PP*\dot\FF$ does not satisfy the truth lemma. However the following question is still open.

\begin{question}
Is there a notion of class forcing $\PP$ such that for every model of $\ZFC$, the truth lemma for $\PP$ fails?
\end{question}

Furthermore, we do not know whether the two-step iteration of notions of class forcing satisfying the truth lemma, with the first forcing in fact being tame, again satisfies the truth lemma, thus also the following question is open.

\begin{question} 
 Does the forcing $\FF$, as introduced in Section \ref{examples}, provably satisfy the Truth Lemma? 
\end{question}


\nocite{*} 

\bibliographystyle{alpha}
 \bibliography{class-forcing}


\end{document}